\documentclass[12pt,a4paper]{amsart}
\usepackage{amssymb}
\usepackage{amsmath}
\usepackage{xcolor}

\usepackage{hyperref}
\usepackage[capitalize]{cleveref}
\usepackage{autonum}

\usepackage{enumitem}

\usepackage{geometry}
\geometry{hmargin=30mm, vmargin=25mm}

\subjclass[2020]{53C17,58J35}
\keywords{sub-Riemannian geometry, Bott connection and curvature, Heisenberg type group}

\title[Local Invariants of H-type foliations]{Local Invariants and Geometry of \\ the sub-Laplacian on H-type foliations}

\address{$^{1,3}$Institut für Analysis, Leibniz Universität, Welfengarten 1, 30167 Hannover, Germany}
\address{$^{2,4}$Matematisk Institutt, Universitetet i Bergen, Allegaten 41, 5007 Bergen, Norway}

\author[W. Bauer]{Wolfram Bauer$^1$}
\email{$^1$bauer@math.uni-hannover.de}

\author[I. Markina]{Irina Markina$^2$}
\email{$^2$irina.markina@uib.no}
\thanks{The work of the second author was partially supported by the project Pure Mathematics in Norway, funded by Trond Mohn Foundation and Troms\o Research Foundation.}

\author[A. Laaroussi]{Abdellah Laaroussi$^3$}
\email{$^3$abdellah.laaroussi@math.uni-hannover.de}

\author[G. Vega-Molino]{Gianmarco Vega-Molino$^4$}
\email{$^4$gianmarco.vega-molino@uib.no}
\thanks{The fourth author is partially supported by the Trond Mohn Foundation - Grant TMS2021STG02 (GeoProCo).}

\newcommand{\R}{\mathbb{R}}
\newcommand{\M}{\mathbb{M}}
\newcommand{\s}[1]{\Gamma(#1)}
\newcommand{\ve}{\varepsilon}
\newcommand{\G}{\mathrm{G}}

\newcommand{\Ho}{\mathcal{H}}
\newcommand{\Vr}{\mathcal{V}}
\newcommand{\kh}{\kappa_\Ho}
\newcommand{\kv}{\kappa_\Vr}
\newcommand{\tv}{\tau_\Vr}
\newcommand{\rh}{n}
\newcommand{\rv}{m}
\newcommand{\ho}{{\hat\omega}}
\newcommand{\hor}{{\ho_r}}
\newcommand{\DsR}{\Delta_\mathrm{sub}}
\newcommand{\hDsR}{\hat\Delta_\mathrm{sub}}

\newcommand{\tr}{\mathrm{tr}}
\newcommand{\End}{\mathrm{End}}

\newcommand{\Id}{\mathrm{Id}}
\newcommand{\Lie}{\mathcal{L}}
\newcommand{\ord}{\mathrm{ord}}
\newcommand{\emphs}[1]{{\it{#1}}}
\newcommand{\defemphs}[1]{{#1}}

\newtheorem{thm}{Theorem}[section]

\newtheorem{lem}[thm]{Lemma}
\newtheorem{prop}[thm]{Proposition}
\newtheorem{notation}[thm]{Notation}
\newtheorem{rem}[thm]{Remark}

\newtheorem{defn}[thm]{Definition}

\newtheorem{inthm}{Theorem}

\numberwithin{equation}{section}


\begin{document}

\begin{abstract}
$H$-type foliations $(\M,\Ho,g_\Ho)$ are studied in the framework of sub-Riemannian geometry with bracket generating distribution defined as the bundle transversal to the fibers. Equipping $\M$ with the Bott connection we consider the scalar horizontal curvature $\kh$ as well as a new local invariant $\tv$ induced from the vertical distribution. We extend recent results on the small-time asymptotics of the sub-Riemannanian heat kernel on quaternion-contact (qc-)manifolds due to A. Laaroussi and we express the second heat invariant in sub-Riemannian geometry as a linear combination of $\kh$ and $\tv$. The use of an analog to normal coordinates in Riemannian geometry that are well-adapted to the geometric structure of $H$-type foliations allows us to consider the pull-back of Kor\'{a}nyi balls to $\M$.  We explicitly obtain the first three terms in the asymptotic expansion of their Popp volume for small radii. Finally, we address the question of when $\M$ is locally isometric as a sub-Riemannian manifold to its $H$-type tangent group. 
\end{abstract}

\maketitle
\setcounter{tocdepth}{1} 
\tableofcontents
\thispagestyle{empty}
 
 

\section{Introduction}
Sub-Riemannian geometry and the analysis of intrinsically associated differential equations, such as the sub-Riemannian heat or sub-Riemannian wave equation, have attracted increasing interest during the last decades \cite{ABB20,ABGR09,BBN19,BG17,BL22,BGG96,BGS84,B89,VHT21,L21,SM12}. Sub-Riemannian geometry is the study of bracket generating metric distributions inside the tangent bundle of a given manifold. On the one hand, such geometries allow for the definition of a metric space structure, of tangent groups or geodesic curves. On the other hand, on a regular sub-Riemannian manifold an analytic object, namely the intrinsic hypoelliptic sub-Laplacian, can be defined in form of a second order differential operator \cite{ABGR09,BR13} and generalizes the Beltrami-Laplace operator in Riemannian geometry. The sub-Riemannian heat kernel $K_{\mathrm{sub}}$ is the fundamental solution of the corresponding heat operator and it is expected to encode specific geometric data of the sub-Riemannian structure. With varying degrees of generality the existence of the small time asymptotic expansion of $K_{\mathrm{sub}}$ or of the sub-Riemannian heat trace on compact manifolds has been obtained by analytical or stochastic methods \cite{BGG96,B89,DH20,VHT21,IT20,SM12}.  

In some cases even explicit formulas for the sub-Riemannian heat kernel are available \cite{BB08, BC21, BW13, BW14, BGG96} and are of great use in the asymptotic analysis. However, due to a large variety of sub-Riemannian structures (e.g. see \cite{M02} for many examples) it seems that a geometric interpretation of the coefficients (heat invariants) in the small time asymptotic expansion of $K_{\mathrm{sub}}$ requires a case-by-case analysis. We mention the results for unimodular Lie groups \cite{SM12}, particular sub-Riemannian manifolds with symmetries \cite{BBN19}, sub-Riemannian structures on the sphere   $\mathbb{S}^7$ \cite{BL22}, and on quaternionic contact (qc) manifolds \cite{L21}. 
In the present paper we study the geometric interpretation of the second heat invariant for the intrinsic sub-Laplacian on $H$-type foliations. Introduced in \cite{BGRV21}, $H$-type foliations form good model spaces inside the family of sub-Riemannian manifolds with step two bracket generating distribution. An $H$-type foliation $\M$ carries an horizontal distribution $\Ho$ of constant rank with a totally geodesic and integrable complement $\Vr$. Moreover, $\M$ is the total space of a Clifford bundle adapted to the splitting $\Ho \oplus \Vr$ of the tangent space (see \cref{Section:H-type-foliations} for a precise definition). $H$-type foliations generalize previously studied examples such as Sasakian manifolds, Twistor spaces or 3-Sasakian manifolds and they carry a canonical connection (\emphs{Bott-connection}) which induces useful notions of horizontal and vertical curvature. We mention that the class of $H$-type foliations has a non-empty intersection with qc-manifolds, similar aspects of which have been recently studied by the   third author in \cite{L21}. 

The local models (tangent groups) of an H-type foliation $\M$ are H(eisenberg)-type Lie groups $G$ which were introduced by A. Kaplan in the 80th  in his study of fundamental solutions to hypoelliptic PDE, cf. \cite{K80}.  The corresponding $H$-type Lie algebras are in one-to-one correspondence with representations of Clifford algebras \cite{CDKR91}.  Such algebras are nilpotent of step two and the complement of their center (first layer) is naturally identified with a left-invariant bracket generating distribution which we call \emphs{horizontal}. Equipped with an inner product, which is induced by the metric on $\M$, the (tangent) $H$-type Lie algebras define the first algebraic and metric invariants of $\M$ as a sub-Riemannian manifold (see \cite{M08}).

$H$-type groups endowed with a left-invariant metric have the structure of a sub-Riemannian manifold themselves and subsequently have become an important class of examples in PDE \cite{BLU07,F73,MS15}, geometry and analysis \cite{BP08,KR95} or geometric measure theory, \cite{AKL09,FSS01}; see also the survey paper \cite{S03}. In analogy to known results in Riemannian geometry or the analysis in \cite{L21} we expect that for sufficiently well-behaved examples the second heat invariant of the sub-Laplacian can be expressed using a suitable concept of curvature.   We remark that the horizontal distribution on a step two nilpotent Lie group has vanishing curvature as well as vanishing second heat invariant. The class of $H$-type foliations, which we are considering in this paper, is more interesting when studying the effect of curvature terms on the heat kernel analysis of the sub-Laplacian. Roughly speaking, such manifolds may be thought of as ``curved versions'' of the $H$-type groups.

The task of expressing the second heat invariant of the sub-Laplacian on an $H$-type foliation in terms of curvature tensors requires the choice of a connection which is suitably adapted to the sub-Riemannian and Clifford bundle structure of $\M$. In this paper we choose the \emphs{Bott connection} in \cite[Chapter 5]{T88}, see also \cite{H12} which - different from the Levi-Civita connection in Riemannian geometry - has non-vanishing torsion. In case of a contact manifold or for pseudo-Hermitian or strictly pseudo-convex CR manifolds a natural connection is given by the \emphs{Tanno connection} \cite{T89} and the \emphs{Tanaka-Webster connection} \cite{T76,W78}, respectively. On qc-manifolds we may as well use the \emphs{Biquard connection} \cite{B00,L21}. 

We mention that all these examples have non-empty intersections with the class of $H$-type foliations. In these intersections the Bott connection coincides with the Tanno and the Tanaka-Webster connection, respectively,  and it differs from the Biquard connection by torsion properties. Different choices of a connection may lead to different representations of the second heat invariant \cite{B00}. 

Our heat kernel analysis is based on an appropriate choice of privileged coordinates on $\M$ constructed by Kunkel in \cite{FSS01} (see also \cite{JL89}) combined with recent methods from \cite{VHT21}. In fact, an inspection of  the proofs in \cite{VHT21} shows that the second heat invariant can be obtained as a convolution integral. The integrand is built from the heat kernel of the sub-Laplacian of the local nilpotent approximation (see \cite{BGG96} for an explicit formula) and an operator which, in particular, depends on the curvature tensor, see \cref{second_coef}. 

Our main result concerning the geometric interpretation of the second heat invariant for an $H$-type foliation is stated below as Theorem A   and will be explained more in detail next. 
  
Let $q \in \M$ and $t>0$. As is well-known the heat kernel $K_{\textup{sub}}$ of the intrinsic sub-Laplace operator $\Delta_{\textup{sub}}$ on $\M$ has a short time asymptotic 
expansion of the form
\begin{equation}
K_{\textup{sub}}(t; q,q)= 
t^{-\frac{Q}{2}}\Big{[} c_0(q) + c_1(q)t+  \ldots + c_N(q)t^N + O(t^{N+1})\Big{]}, \hspace{4ex} \mbox{\it as } \: t \downarrow 0. 
\end{equation}
Here $Q$ denotes the Hausdorff dimension of $\M$ as a metric space equipped with the {Carnot-Carath\'eodory metric}. 
The function $c_j(q)$ with $j=0,1,2 \ldots$ is referred to as the {\it $(j+1)$-th heat invariant} of the sub-Riemannian structure.

Let $\nabla$ denote the Bott connection on $\M$ with torsion $T$. Moreover, $\kh$ and $\tau$ are the \emphs{horizontal scalar curvature} and an invariant defined through the vertical bundle, respectively.  We note that an $H$-type foliation   $\M$ is said to have \emphs{horizontally parallel torsion} when $\nabla_\Ho T = 0$.

\begin{inthm}\label{inthmA}
Let $(\M,\Ho,g_\Ho)$ be an H-type foliation with horizontally parallel torsion. Then the second heat invariant   $c_1(q)$ is a linear combination of the 
local geometric invariants $\kh$ and $\tau$:
\begin{equation}
  c_1=C_1\kh + C_2\tau,
\end{equation}
where $C_1$ and $C_2$ are universal constants depending only on $\rh$ and $\rv$, the ranks of the horizontal and vertical distribution, respectively. 
\end{inthm}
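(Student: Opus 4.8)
The plan is to reduce the on-diagonal short-time asymptotics of $K_{\textup{sub}}$ to a purely local computation around the fixed point $q$ and then to identify the surviving contributions by invariance. First I would fix privileged coordinates adapted to the $H$-type structure following Kunkel \cite{FSS01} (see also \cite{JL89}), assigning weight $1$ to the horizontal and weight $2$ to the vertical coordinates. In these coordinates the sub-Laplacian admits a homogeneous Taylor expansion $\DsR = \hDsR + \mathcal{R} + (\text{higher order})$, where the leading term $\hDsR$ is the sub-Laplacian of the nilpotent approximation, i.e.\ of the $H$-type tangent group $G$ at $q$, and $\mathcal{R}$ is the first nontrivial correction operator whose coefficients are the low-order Taylor coefficients of the sub-Riemannian structure. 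A central point is that, because $\Vr$ is integrable and totally geodesic and because $\nabla$ is the Bott connection, these coefficients are expressible through the curvature of $\nabla$ and its torsion $T$; the hypothesis $\nabla_\Ho T = 0$ eliminates the terms carrying horizontal derivatives of $T$ and leaves only curvature data and pointwise torsion invariants.

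Next I would invoke the explicit heat kernel $\hat{K}$ of $\hDsR$ on the $H$-type group, available in closed form from Beals--Gaveau--Greiner \cite{BGG96}, together with the perturbative scheme of \cite{VHT21}. Expanding $e^{-t\DsR}$ around $e^{-t\hDsR}$ by Duhamel's formula and collecting the terms contributing at the next order of the homogeneity grading, the second heat invariant is obtained as the convolution integral recorded in \cref{second_coef}, schematically of the form
\begin{equation*}
c_1(q) = \int_0^1 \big( \hat{K}(s,\cdot) \ast \mathcal{R}\, \hat{K}(1-s,\cdot) \big)(0)\, ds,
\end{equation*}
evaluated at the identity of $G$, where the convolution and the action of $\mathcal{R}$ are those of the group. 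Since $\hat{K}$ is the heat kernel of a left-invariant operator on $G$, this integral is finite and manifestly depends only on the jet of the structure at $q$.

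The heart of the matter is the algebraic reduction of this integral. The coefficients of $\mathcal{R}$ are linear in the components of the Bott curvature and quadratic in $T$, so I would expand $\mathcal{R}\,\hat{K}$ and integrate term by term. Two structural features do the work here: the $H$-type Clifford relations constrain the tensorial shape of the admissible contractions, and the large isometry group of $G$, which acts by orthogonal transformations preserving the splitting $\Ho\oplus\Vr$ and fixing $\hat{K}$, forces every non-scalar contribution to integrate to zero. Consequently only rotationally invariant scalar combinations survive, and by weight and invariant-theoretic considerations the only such invariants of the $H$-type Bott curvature under horizontally parallel torsion are the horizontal scalar curvature $\kh$ and the vertical invariant $\tau$. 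The universal constants $C_1,C_2$ then emerge as the purely numerical moment integrals of $\hat{K}$ against the respective curvature monomials, depending only on $\rh$ and $\rv$ through the structure of $G$.

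The step I expect to be the main obstacle is precisely this last reduction: verifying that each curvature or torsion term produced by $\mathcal{R}$ either vanishes upon integration against the symmetric kernel $\hat{K}$ or collapses --- after repeated use of the Clifford identities and the Bianchi-type identities for the torsionful connection $\nabla$ --- into a multiple of $\kh$ or $\tau$, and then evaluating the two resulting moment integrals in closed form to exhibit $C_1$ and $C_2$ explicitly as functions of $\rh$ and $\rv$.
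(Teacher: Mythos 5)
Your setup reproduces the paper's proof scheme step for step: parabolic privileged coordinates, the homogeneous decomposition $\DsR=-(\hDsR+\mathcal{A}^{(-1)}+\mathcal{A}^{(0)}+\cdots)$, the observation that horizontally parallel torsion kills the order $(-1)$ correction (so that the Duhamel scheme of \cite{VHT21} collapses to the single convolution integral \cref{second_coef} against the explicit group heat kernel of \cite{BGG96}), and a reduction of the resulting expression by $\mathbf{O}(\rh)\times\mathbf{O}(\rv)$-invariance. Your invariance mechanism (symmetry of $\hat K$ on the tangent group) is a legitimate variant of the paper's (independence of $c_1(q)$ of the choice of orthonormal frame at $q$); both feed into classical invariant theory \cite{ABP73,W97}. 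One imprecision along the way: the coefficients of the correction operator are not ``linear in curvature and quadratic in $T$''; by \cref{data} they also involve second horizontal and first vertical derivatives of the torsion coefficients $J^i_{\alpha\beta}$, and these derivative terms are precisely the ones that end up producing $\tv$.

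The genuine gap is the final reduction, and your claim there is false as stated: invariant theory does \emph{not} say that ``the only such invariants \ldots\ are $\kh$ and $\tau$''. What it gives (the paper's \cref{Lem:traces}) is that $c_1(q)$ is a linear combination of six complete contractions, e.g. $J^i_{\alpha\gamma}J^i_{\beta\delta}R^\beta_{\alpha\delta\gamma}$, $J^i_{\alpha\beta}X_\beta X_\gamma(J^i_{\alpha\gamma})$, $J^i_{\alpha\beta}X_\gamma X_\gamma(J^i_{\alpha\beta})$, none of which is visibly a multiple of $\kh$ or $\tv$, and no weight-counting argument identifies them as such. The substantive content of the paper's proof is the collapse of each such trace: one needs the commutator identity \cref{Commutator_R_J}, the first Bianchi identity \cref{First_Bianchi_identity}, the relations for $(\nabla_ZJ)_W$ from \cref{Lemma_relations_connection_horizontally_parallel}, and the trace and eigenvalue analysis of the operator $M(Z,W)=J_WJ_Z(\nabla_ZJ)_W$ in \cref{trace_index} --- which is also what defines $\tv$ in the first place --- in order to prove, for instance,
\begin{equation}
J^i_{\alpha\gamma}J^i_{\beta\delta}R^\beta_{\alpha\delta\gamma}=\kh+2\tv,\qquad
J^i_{\alpha\beta}X_\beta X_\gamma(J^i_{\alpha\gamma})=\tfrac12\tv,\qquad
J^i_{\alpha\beta}X_\gamma X_\gamma(J^i_{\alpha\beta})=0.
\end{equation}
You name the right tools (Clifford and Bianchi identities) but defer exactly this computation; since it is where the conclusion of the theorem (and the constants $C_1,C_2$) actually comes from, the proposal as written does not yet constitute a proof.
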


We address two further problems in connection with $H$-type foliations which so far seemed to not have been settled in the literature. The first question concerns the asymptotic behaviour of the   Popp volume $\textup{\it vol}$ of small balls   on $\M$. Let $r>0$ and consider the preimage $B(q,r) \subset \M$ around $q \in \M$ of a Korányi ball in $\mathbb{R}^{\rh+\rv}$ of radius $r$ under   (parabolic) privileged coordinates. In \cref{Section:Popp} it is shown: 

\begin{inthm}\label{inthmB}
Let $q \in \M$. As $r\to 0$ it holds:
$$r^{-Q} \cdot \mathrm{vol}(B(q,r))=a_{\rh,\rv}-b_{\rh,\rv}\kh(q)r^2+O(r^3),$$
where $a_{\rh,\rv}$ and $b_{\rh,\rv}$ are  suitable positive constants (which have explicit integral expressions). 
\end{inthm}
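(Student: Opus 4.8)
The plan is to express the Popp volume in Kunkel's privileged coordinates centered at $q$, to rescale by the anisotropic dilations that define the Korányi ball, and then to Taylor expand in the radius $r$. First I would write $\mathrm{vol} = \rho\, dy$ in these coordinates, where $dy$ is Lebesgue measure on $\R^{\rh+\rv}$ and $\rho$ is the smooth positive Popp density; since the leading order of the sub-Riemannian structure at $q$ is its nilpotent tangent group, $\rho(0)$ equals the (constant) Popp density of the flat $H$-type group in exponential coordinates. Writing a point as $y=(y_h,y_v)$ with $y_h \in \R^{\rh}$ horizontal and $y_v \in \R^{\rv}$ vertical, the anisotropic dilation is $\delta_r(y_h,y_v)=(r y_h, r^2 y_v)$, and the Korányi gauge $N$ is homogeneous of degree one, so $B(q,r)$ corresponds to $\{N\le r\}=\delta_r(\{N\le1\})$. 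Because the Jacobian of $\delta_r$ equals $r^{\rh+2\rv}=r^Q$, the substitution $y=\delta_r(\eta)$ gives the exact identity
\[ r^{-Q}\,\mathrm{vol}(B(q,r)) = \int_{\{N\le1\}} \rho(\delta_r(\eta))\, d\eta, \]
which reduces the problem to the small-$r$ behaviour of the integrand.

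Next I would Taylor expand $\rho(\delta_r(\eta)) = \rho(r\eta_h, r^2\eta_v)$ in powers of $r$, writing $\eta_h=(\eta_1,\dots,\eta_{\rh})$. By weighted homogeneity the coefficient of $r^0$ is $\rho(0)$, the coefficient of $r^1$ is $\sum_{i=1}^{\rh}\partial_{\eta_i}\rho(0)\,\eta_i$, and the coefficient of $r^2$ is the sum of the horizontal-quadratic term $\tfrac12\sum_{i,j=1}^{\rh}\partial_{\eta_i}\partial_{\eta_j}\rho(0)\,\eta_i\eta_j$ and a term linear in $\eta_v$. Integrating the $r^0$ term gives $a_{\rh,\rv} := \rho(0)\,|\{N\le1\}| > 0$. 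Since the Korányi gauge depends only on $|\eta_h|$ and $|\eta_v|$, it is invariant under $\eta_h\mapsto-\eta_h$, under $\eta_v\mapsto-\eta_v$, and under rotations of $\eta_h$; hence the $r^1$ term (odd in $\eta_h$) and the $\eta_v$-linear part of the $r^2$ term (odd in $\eta_v$) integrate to zero, while among the horizontal-quadratic terms the off-diagonal entries cancel and only the diagonal ones survive, combining into $\tfrac12\big(\sum_{i=1}^{\rh}\partial_{\eta_i}^2\rho(0)\big)\int_{\{N\le1\}}\eta_1^2\, d\eta$.

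The geometric heart of the argument is then to identify the horizontal trace of the Hessian $\sum_{i=1}^{\rh}\partial_{\eta_i}^2\rho(0)$ with the horizontal scalar curvature $\kh(q)$. For this I would compute the weighted second-order Taylor expansion of the adapted horizontal orthonormal frame — equivalently of the structure functions — in the privileged coordinates, and substitute it into the coordinate expression for the Popp density built from the bracket Gram matrix. In analogy with the Riemannian model, where $\sqrt{\det g} = 1 - \tfrac16\,\mathrm{Ric}_{ij}\,x^ix^j + O(|x|^3)$ in normal coordinates, I expect an identity of the form $\sum_{i=1}^{\rh}\partial_{\eta_i}^2\rho(0) = -c\,\rho(0)\,\kh(q)$ with an explicit universal constant $c>0$ depending on $\rh$, $\rv$, and the normalization of the Bott connection; the Clifford bundle structure is what forces every curvature contribution other than the scalar part $\kh$ to average to zero against the rotationally symmetric ball. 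Substituting yields the $r^2$ coefficient $-b_{\rh,\rv}\,\kh(q)$ with $b_{\rh,\rv} = \tfrac{c}{2}\,\rho(0)\int_{\{N\le1\}}\eta_1^2\, d\eta > 0$.

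The main obstacle I anticipate is precisely this last identification: it requires the explicit expansion to weighted order two of the coefficients defining the horizontal frame, a careful reduction of the resulting curvature expression to the single scalar $\kh$, and verification that all other curvature components integrate to zero against the symmetric ball. Once the constant $c$ and its sign are pinned down, positivity of $a_{\rh,\rv}$ and $b_{\rh,\rv}$ is immediate from $\rho(0)>0$ and $\int_{\{N\le1\}}\eta_1^2\, d\eta>0$, while the error term $O(r^3)$ follows from Taylor's theorem applied to the smooth density $\rho$ on the bounded set $\{N\le1\}$.
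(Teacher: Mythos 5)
Your overall route coincides with the paper's: privileged parabolic coordinates, the exact scaling identity $r^{-Q}\,\mathrm{vol}(B(q,r)) = \int_{\{N\leq 1\}}\rho(\delta_r\eta)\,d\eta$, parity of the Korányi ball to kill the odd terms, and identification of the surviving quadratic coefficient with curvature. The scaffolding is sound, but there is a genuine gap at the step you yourself flag: the identity $\sum_{\alpha=1}^{\rh}\partial_{x^\alpha}^2\rho(0) = -c\,\rho(0)\,\kh(q)$ with $c>0$ is only \emph{conjectured} by analogy with the Riemannian expansion of $\sqrt{\det g}$; neither the identity nor the sign of $c$ (on which the positivity of $b_{\rh,\rv}$ rests) is established by anything in your argument, and this identification is the entire content of the theorem rather than a routine verification. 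The paper closes exactly this gap with \cref{taylor_exp}: since by \cref{measure} the Popp density relative to the special coframe is the constant $\rh^{-\rv/2}$, the density in coordinates is $\rh^{-\rv/2}$ times the determinant of the coframe coefficients; its weighted degree-two part receives a contribution only from $\theta^{\alpha(3)}=\frac{1}{6}R_{\gamma\delta\beta}^{\alpha}(q)x^{\beta}x^{\gamma}dx^{\delta}$ (one checks that $\eta^{i(4)}$ contains no $dz^{i}$ term, so it drops out of the top-degree wedge), giving $\rho^{(2)}=\frac{1}{6\cdot 2^{\rv}\rh^{\rv/2}}\,R_{\gamma\alpha\beta}^{\alpha}(q)\,x^{\beta}x^{\gamma}$; the reflection symmetries of the ball then select $\beta=\gamma$, and the skew-symmetry of $R$ in its first two lower indices, $R_{\beta\alpha\beta}^{\alpha}=-R_{\alpha\beta\beta}^{\alpha}$, turns the resulting trace into $-\kh(q)$. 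In your normalization this yields precisely $c=\tfrac13$.

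A smaller but telling inaccuracy: you attribute the reduction to the single scalar $\kh$ to the Clifford bundle structure forcing all other curvature contributions to average to zero against the symmetric ball. That is not where the H-type condition enters. The reduction to $\kh$ uses only coordinate reflections of the Korányi ball plus the skew-symmetry of $R$; no Clifford-type cancellation is needed. The H-type condition is used earlier and differently: it forces the bracket Gram matrix to satisfy $B_{ij}=\rh\,\delta_{ij}$, hence the Popp density to be \emph{constant} relative to the adapted coframe (\cref{measure}). Without that, $\det B$ would be a nonconstant factor whose own second-order Taylor coefficients would contaminate $\rho^{(2)}$, and the clean identification with $\kh$ you anticipate would not go through as described.
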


Finally, we give a sufficient and necessary condition for an $H$-type foliation with horizontally parallel torsion to be locally isometric as a sub-Riemannian manifold to its tangent group ($H$-type group). 

\begin{inthm}\label{inthmC}
Let $(\M,\Ho,g_\Ho)$ be an H-type foliation with horizontally parallel torsion. Then, $\M$ is locally isometric as a sub-Riemannian manifold to its tangent group if and only if the curvature tensor of the Bott connection vanishes, i.e. $R \equiv 0$.
\end{inthm}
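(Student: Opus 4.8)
The plan is to establish both implications by regarding the Bott connection $\nabla$ as a genuine affine connection on $T\M=\Ho\oplus\Vr$ with torsion $T$ and curvature $R$, and by exploiting that the tangent ($H$-type) group carries the canonical flat connection for which the left-invariant horizontal and vertical fields are parallel. The forward implication is then essentially the invariance of curvature under isometries, while the converse is a Cartan-type reconstruction from the structure equations.

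\textbf{Necessity.} Suppose $\M$ is locally isometric as a sub-Riemannian manifold to its tangent group $G$. The crucial point is that the whole $H$-type datum is intrinsic to $(\Ho,g_\Ho)$: the vertical bundle is recovered from the brackets $[\Ho,\Ho]$, and the vertical metric together with the Clifford action are fixed by $g_\Ho$ and the bracket. Hence the Bott connection, and with it the tensor $R$, is preserved by sub-Riemannian isometries. Since $G$ is step-two nilpotent, its left-invariant horizontal and vertical fields are $\nabla$-parallel and its Bott connection is the canonical flat connection, so that $R_G\equiv 0$. Pulling back $R_G$ along the local isometries that cover $\M$ yields $R\equiv 0$.

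\textbf{Sufficiency.} Assume now $R\equiv 0$. First I would upgrade the hypothesis $\nabla_\Ho T=0$ to the full parallelism $\nabla T=0$; granting this, the argument proceeds as follows. Flatness of $\nabla$ makes parallel transport locally path-independent, so on a small ball there exist $\nabla$-parallel local frames; since $\nabla$ preserves the splitting $\Ho\oplus\Vr$ and is metric on $\Ho$, these frames $\{E_i\}$ may be chosen adapted to the splitting and $g_\Ho$-orthonormal on the horizontal part. In such a frame the connection form vanishes, so the first structure equation becomes $d\omega^k=\tfrac12\,T^k_{ij}\,\omega^i\wedge\omega^j$ for the dual coframe $\{\omega^i\}$, and $\nabla T=0$ forces the coefficients $T^k_{ij}$ to be constant. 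The first Bianchi identity, which under $R\equiv 0$ and $\nabla T=0$ reduces to $\sum_{\mathrm{cyclic}}T\big(T(E_i,E_j),E_k\big)=0$, is exactly the Jacobi identity for the bracket $[\,\cdot\,,\cdot\,]:=-T$. Thus $\{\omega^i\}$ solves Maurer--Cartan equations with constant structure constants, and the standard reconstruction produces a local diffeomorphism onto the corresponding Lie group pulling the Maurer--Cartan coframe back to $\{\omega^i\}$; as the frame is horizontally orthonormal, this is a sub-Riemannian isometry. Because the Bott torsion of an $H$-type foliation is supported on $\Ho\times\Ho\to\Vr$ (the horizontal torsion encoding the Clifford action), while $T(\Ho,\Vr)$ and $T(\Vr,\Vr)$ vanish owing to the integrable, totally geodesic vertical bundle, the reconstructed bracket is step-two nilpotent and inherits the Clifford relations; the Lie group is therefore precisely the $H$-type tangent group.

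\textbf{Main obstacle.} The delicate step is the promotion of $\nabla_\Ho T=0$ to $\nabla T=0$, i.e. the vertical parallelism $\nabla_\Vr T=0$. Since the horizontal torsion is algebraically determined by the Clifford map, this amounts to a parallelism of the Clifford structure along the leaves, which I expect to follow either directly from the $H$-type structure or from the vanishing of $R$ via the structural identities for the Bott connection. A secondary point, needed in the necessity direction, is to confirm that the vertical bundle, the vertical metric, and the Clifford action are genuinely isometry-invariant, so that sub-Riemannian isometries indeed intertwine the Bott connections.
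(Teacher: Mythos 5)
Your sufficiency argument follows essentially the same route as the paper: $R\equiv 0$ yields a local $\nabla$-parallel adapted orthonormal frame, the torsion coefficients become the structure constants of a step-two ($H$-type) algebra, and one integrates the resulting Maurer--Cartan system to obtain the local isometry. (The paper carries out the last step by hand: $d\theta^\alpha=0$ gives $\theta^\alpha=dx^\alpha$ by the Poincar\'e lemma, and then $\eta^i=dz^i+\tfrac12 J^i_{\alpha\beta}x^\alpha dx^\beta$; your appeal to the standard reconstruction theorem for coframes satisfying Maurer--Cartan equations with constant structure constants is an acceptable substitute.) The genuine gap is precisely the step you defer: the constancy of the structure functions is never proved, so the argument is incomplete at its central point. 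It can be closed, and more easily than by either route you conjecture; in particular there is no need to establish $\nabla_\Vr T=0$ as a preliminary ``upgrade.'' In the parallel frame one has $X\bigl(J^k_{\alpha\beta}\bigr)=g\bigl(Z_k,(\nabla_X T)(X_\alpha,X_\beta)\bigr)=0$ for every horizontal $X$, by $\nabla_\Ho T=0$ and parallelism of the frame and metric. A function annihilated by all horizontal vector fields is also annihilated by all iterated brackets of horizontal fields, and since $\Ho$ is bracket generating this forces $dJ^k_{\alpha\beta}=0$: the structure functions are constant. (Alternatively, your guess that flatness itself helps is correct: with $R\equiv 0$ the first Bianchi identity \cref{First_Bianchi_identity} applied to $(X_\alpha,X_\beta,Z_i)$ gives $(\nabla_{Z_i}T)(X_\alpha,X_\beta)=0$, because $(\nabla_{\cdot}\,T)(\Ho,\Vr)=0$ identically, as $T(\Ho,\Vr)=0$ and $\nabla$ preserves the splitting $\Ho\oplus\Vr$.)

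On necessity, your own caveat is the real issue. A sub-Riemannian isometry preserves only $(\Ho,g_\Ho)$, and the vertical bundle is \emph{not} ``recovered from the brackets $[\Ho,\Ho]$'' as you assert: the pointwise span of such brackets is all of $T\M$, since already $[X,fX]=(Xf)X$ produces every horizontal direction; only the quotient $T\M/\Ho$ and the induced bracket map $\Lambda^2\Ho\to T\M/\Ho$ are canonical. Hence it is not automatic that a local sub-Riemannian isometry maps $\Vr$ to the center of the tangent group and intertwines the Bott connections, and your necessity paragraph rests on this unproven claim. You are in good company --- the paper sidesteps this direction entirely, opening its proof with ``it suffices to construct a local isometry \dots under the condition $R\equiv 0$'' and proving only sufficiency --- but as written your necessity argument should be regarded as heuristic rather than complete.
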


The structure of the paper is as follows: In \cref{Section:H-type-foliations} we recall some basic notions in sub-Riemannian geometry and we introduce $H$-type foliations $\M$ which form a model class of sub-Riemannian manifolds with a Clifford bundle structure, \cite{BGRV21}. 

We recall the construction of privileged coordinates \cite{K08} in \cref{Section:Priviledged Coordinates}. Throughout the paper all local calculations will be performed within such coordinates which are well-adapted to the sub-Riemannian structure on $\M$.  In particular, the coordinate system obeys linear scaling in horizontal directions much like Riemannian normal coordinates, but obeys parabolic scaling in vertical directions reflecting the bracket structure. An essential ingredient to our analysis is the calculation of the homogeneous parts of the associated co-frame and connection one-forms in \cref{taylor_exp}.  Finally, \cref{inthmC} (\cref{thmC}) is achieved.

In \cref{Section:Popp} we calculate the Popp volume $\mathcal{P}$ on an $H$-type foliation which is used in the definition of the intrinsic sub-Laplacian $\Delta_{\mathrm{sub}}$.  A local expression of $\Delta_{\mathrm{sub}}$ is then given in \cref{sublap}.  \cref{Section:Popp} contains a proof of   \cref{inthmB} (\cref{thmB}) as its main result. 
             
Based on our calculations in local privileged coordinates and methods from \cite{VHT21} we introduce a new invariant $\tau$ in \cref{Section:Heat invariants} and derive a representation of the second heat invariant as a linear combination of $\tau$ and the horizontal scalar curvature $\kh$ with respect to the Bott connection. This result can be further refined when assuming horizontally parallel Clifford structure. The section concludes by proving \cref{thmA} (\cref{inthmA}).

The paper ends with an appendix which collects various technical calculations. 
\section{Sub-Riemannian geometry and H-type foliations} \label{Section:H-type-foliations}
In the setting of sub-Riemannian geometry, one works with a smooth manifold $\M$ of dimension $\rh + \rv$ equipped with a pair $(\Ho,g_\Ho)$ where  $\Ho$ is a rank $\rh$ subbundle of $T\M$ and $g_\Ho$ is a symmetric, positive-definite (2,0)-tensor on $\Ho$.  We insist that the \emphs{bracket-generating} condition holds, which we say is satisfied if at every point $p \in \M$ one can generate all of $T_p\M$ by taking sufficiently many Lie brackets of vector fields in $\s\Ho$ at $p$.  Notably, this is equivalent {\color{black} to} H\"ormander's condition in PDEs \cite{M02}. One can define the sub-Riemannian (Carnot-Carathéodory) distance $d_{cc}(p,q)$ for two points $p,q \in \M$ by the usual infimum formula taken over the space of smooth paths $C_{\Ho,p,q}$  connecting $p$ to $q$ such that $\dot\gamma(t) \in \Ho_{\gamma(t)}$ at almost every point along $\gamma$.  The now famous theorem of Chow \cite{C39} and Rashevsky \cite{R38} tells us that $(\M,d_{cc})$ is a metric space when $\Ho$ is bracket-generating.

Such a triple $(\M,\Ho,g_\Ho)$ is called a \emphs{sub-Riemannian manifold}.  These objects arise naturally in myriad settings and have been extensively studied in recent decades with increasing interest, see \cite{ABB20,M02,S86} for an overview.

An established approach is to study sub-Riemannian manifolds by equipping a Riemannian \emphs{penalty metric} 
$$g_\ve = g_\Ho \oplus \frac{1}{\ve}g_\Vr$$
parameterized by $\ve > 0$ on an appropriately chosen complementary distribution $\Vr$.  

\begin{notation}
In the sequel we will write $\Ho,\Vr$ as subscripts to denote appropriate projections of tensor fields. 
\end{notation}

In some key examples there is a natural choice of a complement, but generically this is not the case. However, when such a choice can be made one can then attempt to recover purely sub-Riemannian results by working with Riemannian tools on $(\M,g_\ve)$ and considering the limit $\ve \rightarrow 0^+$, however one should understand that Riemannian definitions of curvature do not make sense in this limit.

\subsection{H-type foliations}
We now review the notion of H-type foliations introduced in \cite{BGRV21} and motivated by earlier results of \cite{BG17}. In particular, there is a sub-class of H-type foliations associated in an essential way to qc-manifolds equipped with a totally-geodesic foliation, as appear in \cite{L21} which motivates the present paper.

Let $(\M,g)$ be a smooth, oriented, connected Riemannian manifold of dimension $\rh+\rv$.  To keep formulas shorter we will also use the notation $g(\cdot, \cdot)= \langle\cdot, \cdot \rangle$ for the Riemannian metric and occasionally we write $\|X\|^2=g(X,X)$. We assume that $\M$ is equipped with a Riemannian foliation with bundle-like complete metric and totally geodesic $\rv$-dimensional leaves. The subbundle $\Vr$ formed by vectors tangent to the leaves is referred to as the  \emphs{vertical distribution}. The subbundle $\Ho$ orthogonal to $\Vr$ will be called the \emphs{horizontal distribution}.

 We recall that the foliation is called \emphs{totally-geodesic} if the leaves of the foliation are totally-geodesic submanifolds, and said to have \emphs{bundle-like metric} if geodesics that are anywhere tangent to $\Ho$ remain always tangent to $\Ho$.  These conditions have the (respective) characterizations with respect to the Lie derivative \cite{T88} 
\begin{equation}
(\mathcal{L}_Z g)(X,X) = 0, \qquad (\mathcal{L}_X g)(Z,Z) = 0 
\end{equation}
for vector fields $X \in \Gamma(\Ho), Z \in \Gamma(\Vr)$. 

On such a manifold there is a canonical connection $\nabla$ that preserves the metric and the foliation structure, the \emphs{Bott connection}, see \cite{BGRV21,T88} and also a generalization to equiregular higher step sub-Riemannian manifolds, the \emphs{Hladky connection} \cite{H12}. 

\begin{defn}
The \defemphs{Bott connection} on a totally-geodesic foliation with bundle-like metric is uniquely characterized by the following properties:
\begin{enumerate}
\item (Metric) $\nabla g = 0$,
\item (Compatible) For $X \in \s{T\M}$, $\nabla_X \Ho \subseteq \Ho \text{ and } \nabla_X \Vr \subseteq \Vr.$
\item (Torsion) The torsion $T(X,Y):=\nabla_X Y-\nabla_Y X-[X,Y]$ satisfies 
	\begin{enumerate}
		\item $T(\Ho,\Ho) \subseteq \Vr,$
		\item $T(\Ho,\Vr)=T(\Vr,\Vr)=0.$
	\end{enumerate}
\end{enumerate}
\end{defn}

For every $Z\in\Gamma(\Vr)$ define a bundle endomorphism $J_Z:\Ho\rightarrow \Ho$ by
\begin{equation}
g(J_Z X,Y)=g(Z,T(X,Y)).
\end{equation}

\begin{lem}
Suppose the \defemphs{H-type condition}
\begin{equation}\label{H_type_cond}
J_Z^2=-\|Z\|^2 \Id_\Ho
\end{equation}
is satisfied.  
Then the tangent space $T_q\M$ at any point $q \in \M$ is generated by  $[X,\Ho]_q$ and $\mathcal{H}_q$ for every $X \in \s\Ho$ such that $X_q \ne 0$.
\end{lem}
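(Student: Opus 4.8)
The plan is to work pointwise and reduce the claim to a statement in linear algebra on the fibre $T_q\M = \Ho_q \oplus \Vr_q$. Since $\Ho_q$ is already of dimension $\rh$ and is among the proposed generators, it suffices to prove that the $\Vr$-components of the brackets $[X,Y]_q$, as $Y$ ranges over $\s\Ho$, span the full $\rv$-dimensional fibre $\Vr_q$. First I would extract the vertical part of such a bracket from the torsion identity: writing $[X,Y] = \nabla_X Y - \nabla_Y X - T(X,Y)$ and using that the Bott connection preserves $\Ho$ (so $\nabla_X Y, \nabla_Y X \in \s\Ho$) together with the torsion axiom $T(\Ho,\Ho) \subseteq \Vr$, the horizontal and vertical parts separate, and one reads off that the $\Vr$-projection of $[X,Y]_q$ equals $-T(X_q, Y_q)$. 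As $T$ is tensorial and $Y_q$ can be prescribed arbitrarily in $\Ho_q$ by a suitable extension $Y \in \s\Ho$, the claim becomes equivalent to showing that the linear map $A \colon \Ho_q \to \Vr_q$, $A(Y) = T(X_q, Y)$, has image spanning $\Vr_q$.

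Next I would identify the $g$-adjoint of $A$. The defining relation $g(J_Z X, Y) = g(Z, T(X,Y))$ says precisely that the map $B \colon \Vr_q \to \Ho_q$, $B(Z) = J_Z X_q$, is adjoint to $A$:
\begin{equation}
  g\big(B(Z), Y\big) = g\big(Z, A(Y)\big) \qquad \text{for all } Y \in \Ho_q,\ Z \in \Vr_q.
\end{equation}
Hence $A$ is surjective if and only if its adjoint $B$ is injective. Here the H-type condition enters decisively: from $J_Z^2 = -\|Z\|^2 \Id_\Ho$ the endomorphism $J_Z$ is invertible for every $Z \neq 0$ (as $g$ is positive definite, $\|Z\|^2 > 0$), so $J_Z X_q = 0$ forces $Z = 0$ once $X_q \neq 0$. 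Thus $B$ is injective, $A$ is onto, and the $\Vr$-components of the brackets $\{[X,Y]_q\}$ fill $\Vr_q$; together with $\Ho_q$ this generates all of $T_q\M$.

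The computation is short, and the only point requiring care is the reduction itself — namely, that the genuinely differential-geometric object $[X,\Ho]_q$ collapses to the pointwise, tensorial torsion $T(X_q, \cdot)$, after which everything is linear algebra governed by the invertibility of $J_Z$. I expect the main (though mild) obstacle to be this bookkeeping: verifying that the connection terms contribute only horizontally and that arbitrary values $Y_q \in \Ho_q$ are attained by extensions $Y \in \s\Ho$, so that no information is lost in passing from vector fields to their values at $q$.
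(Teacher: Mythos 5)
Your proof is correct, and it finishes by a different mechanism than the paper's. Both arguments share the same reduction: since the Bott connection preserves $\Ho$ and $T(\Ho,\Ho)\subseteq\Vr$, the vertical part of $[X,Y]$ for horizontal $X,Y$ is the tensorial quantity $-T(X,Y)$, so the claim becomes pointwise (the paper records this as $T(X,Y)=-[X_\Ho,Y_\Ho]_\Vr$). From there the paper is constructive: using the defining relation of $J$ and the H-type identity it computes $g\left(Z,T(X,J_ZX)\right)=\|J_ZX\|^2=\|X\|^2\|Z\|^2$ and concludes $[X,J_ZX]_\Vr=-\|X\|^2Z$, so the explicit fields $J_ZX$, as $Z$ ranges over $\Vr_q$, already sweep out all of $\Vr_q$; this exhibits a concrete right inverse $Z\mapsto\|X\|^{-2}J_ZX_q$ of the torsion map, which is exactly the quantitative ``fatness'' of $\Ho$. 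You instead run a duality argument: the relation $g(J_ZX,Y)=g(Z,T(X,Y))$ identifies $Z\mapsto J_ZX_q$ as the adjoint of $Y\mapsto T(X_q,Y)$, and injectivity of the adjoint (from invertibility of $J_Z$ for $Z\neq 0$) gives surjectivity. Your version uses slightly less than the full Clifford identity $J_Z^2=-\|Z\|^2\Id_\Ho$ --- only the nondegeneracy statement $J_ZX_q\neq 0$ for $Z\neq 0$ --- and it also sidesteps a terse step in the paper: the paper's single inner-product identity by itself pins down only the component of $T(X,J_ZX)$ along $Z$, and one needs the polarized form of the H-type condition to see that the components orthogonal to $Z$ vanish before asserting the vector equation $[X,J_ZX]_\Vr=-\|X\|^2Z$; the adjoint argument never confronts this. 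What the paper's explicit computation buys in exchange is the bracket formula itself, with its uniform constant $\|X\|^2$, which is the form of the statement that resurfaces later in the structure equations and the tangent-group picture.
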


Such a distribution is also called \emphs{strongly bracket generating} or \emphs{fat}, cf. \cite{M76,S86}.

\begin{proof}
Observe that the torsion of the Bott connection is given by
\begin{equation}
T(X,Y)=-[X_\Ho,Y_\Ho]_\Vr.
\end{equation}
Letting $X \in \s\Ho, Z \in \s\Vr$, we have the following relation for the bundle map $J_Z$
\begin{equation}
g\left(Z, T(X,J_ZX) \right) = g\left(J_ZX, J_ZX \right) = -g\left(X, J^2_ZX \right) = g\left( Z, \|X\|^2 Z \right).
\end{equation}

Combining the last 2 equalities we obtain
\begin{equation}
[X,J_ZX]_{\Vr}=-\|X\|^2 Z
\end{equation}
and the claim follows.
\end{proof}

\begin{defn}
If the H-type condition is satisfied then the sub-Riemannian manifold $(\M,\Ho,g_\Ho)$ is called an \defemphs{H-type foliation}.
\end{defn}

\begin{rem}
We observe that it can be at first counter-intuitive to call $(\M,\Ho,g_\Ho)$ a foliation as $\Ho$ is bracket generating, however we emphasize that the foliation tangent to $\Vr$ uniquely determines its $g$-complement $\Ho$; since we focus on these objects' structure as sub-Riemannian manifolds, we will continue to emphasize the sub-Riemannian distribution.  See also the related \cite[Remark 2.3]{BGRV21}.
\end{rem}

Note that due to polarization the H-type condition is equivalent to 
\begin{equation}\label{H-type-condition-polarization}
J_Z J_W + J_W J_Z= -2 g(Z,W) \Id_\Ho, \qquad  Z,W \in \s\Vr. 
\end{equation}

\subsection{Curvature and frames}
Let us denote by $R$ the \emphs{curvature tensor} of the Bott connection defined by
\begin{align}
&R(X,Y)Z:=\nabla_X\nabla_Y Z-\nabla_Y \nabla_X Z-\nabla_{[X,Y]}Z. 
\end{align}

\begin{notation}
In the following we denote by $\left\{\theta^1,\cdots, \theta^\rh\right\}$ (resp. $\left\{\eta^1,\cdots, \eta^\rv\right\}$) the metric dual frame of a horizontal frame $\left\{X_1,\cdots,X_\rh\right\}$ of $\Ho$ (resp. vertical frame $\left\{Z_1,\cdots,Z_\rv\right\}$ of $\Vr$).\
 
It is occasionally convenient to have a notation for the entire frame and coframe. In these cases we may write $\{Y_1,\cdots,Y_{\rh+\rv}\}$ and $\{\nu^1,\cdots,\nu^{\rh+\rv}\}$ and one should interpret $Y_a = X_a$ for $a \in \{1,\cdots,\rh\}, Y_{\rh + a} = Z_a$ for $a \in \{1,\cdots,\rv\}$, and analogously for the $\nu^a$. Furthermore, in order to have a consistent index notation we will use different letters for different ranges of indices as follows:
\begin{equation}
a,b,c,d \in\{1,\cdots,\rh+\rv\},\hspace{2mm}\alpha,\beta,\gamma,\delta\in\{1,\cdots,\rh\},\quad i,j,k\in\{1,\cdots,\rv\}.
\end{equation}  
\end{notation}

With this convention we set
\begin{equation}
J_{\alpha\beta}^{i}:=g(J_{Z_i}X_\alpha,X_\beta),\quad T_{\alpha\beta}^{i}:=\eta^i\left(T(X_\alpha,X_\beta)\right)\text{ and }R_{abc}^d:=\nu^d \left(R(Y_a,Y_b)Y_c\right)
\end{equation}

and define the \emphs{horizontal scalar curvature} induced by the Bott connection 
\begin{equation}\label{Def: scalar curvature}
\kh = \sum_{\alpha, \beta=1}^\rh R_{\alpha \beta \beta}^{\alpha}.
\end{equation}

Throughout the text we frequently apply the \emphs{first Bianchi identity} for a connection with torsion, 
\begin{equation}
\circlearrowright R(X,Y)Z=\circlearrowright \nabla_X T(Y,Z) + \circlearrowright T(T(Y,Z),X),
\end{equation}
which in case of the Bott connection the torsion properties imply that this takes the simpler form
\begin{equation}\label{First_Bianchi_identity}
\circlearrowright R(X,Y)Z=\circlearrowright \nabla_XT(Y,Z) \in \Gamma(\Vr) 
\end{equation}
with $\circlearrowright$ denoting the cyclic sum. 

\begin{notation}\label{not:summations}
In the sequel we use a nonstandard summation convention: whenever any index is repeated twice it must be summed.  When unclear, we will make the summations explicit.  For example, \eqref{Def: scalar curvature} becomes
\begin{equation}
\kh = R_{\alpha \beta \beta}^{\alpha}.
\end{equation}
in this notation.

Take care to note that this is not the standard Einstein summation convention, wherein repeated indices are only summed when they appear as both a superscript and subscript.
\end{notation}

\section{Construction of privileged coordinates}
\label{Section:Priviledged Coordinates}

Consider $\R^{\rh+\rv}=\R^\rh \oplus \R^\rv$ with coordinates $(x,z)=(x^1,\ldots,x^\rh,z^1,\ldots,z^\rv)$ and the natural dilation $\delta_t \colon \R^{\rh+\rv} \rightarrow \R^{\rh+\rv}$ defined for $t > 0$ by 
\begin{equation} \label{def:dilation}
\delta_t(x,z) = (tx,t^2z).
\end{equation} 
We associate the following weights $w$ for the generators $1$, $x^{\alpha}$, $z^i$, $\frac{\partial}{\partial x^\alpha}$, $\frac{\partial}{\partial z^i}$ of polynomial vector fields:
\begin{equation}
w(1)=0,\quad w(x^{\alpha})=1,\quad w(z^i)=2,\quad w \left(\frac{\partial}{\partial x^\alpha} \right) =-1,\quad w \left(\frac{\partial}{\partial z^i}\right) =-2.
\end{equation}

A {\it formal monomial} is a product of the generators, the order of which is the sum of weights of each term. A polynomial vector field on $\R^{\rh+\rv}$ (considered as a differential operator of first order) is said to be \emphs{homogeneous} if it is a sum of monomials of the same weight. For any smooth vector field, applying the Taylor expansion for the coefficients, one can rearrange the terms into a sum of homogeneous terms
\begin{equation}
X \sim \sum_{l=-2}^{+\infty} X^{(l)},
\end{equation}
where $X^{(l)}$ is a homogeneous vector field of order $l$.  Written in this form, we will call this a \emphs{parabolic Taylor expansion}.

We say that a system of coordinates $\Psi \colon U \to \R^{\rh+\rv}$ centered at $q \in U \subseteq \M$ is \emphs{privileged} if for any horizontal vector field $X \in \s\Ho$ the terms of the homogeneous order $-2$ vanish, see more details in \cite[Chapter 10.4]{ABB20} or \cite{B96}.

Now we will construct a system of privileged coordinates that is convenient for the further calculations; it is reminiscent of normal Euclidean coordinates and normal coordinates for CR manifolds \cite{JL89,K08}. Let $q\in \M$ be fixed. Consider an arbitrary tangent vector $X+Z\in\Ho_q\oplus\Vr_q=T_q \M$ and define $\gamma_{X+Z}$ to be the curve starting at $q$ with 
  $$\gamma_{X+Z}(0)=q,\hspace{2mm} \dot{\gamma}_{X+Z}(0)=X,\hspace{2mm} \left. D_t \dot{\gamma}_{X+Z} \right\vert_{t=0} = Z \text{ and } D_t^2 \dot{\gamma}_{X+Z}=0,$$
  where $D_t$ denotes the covariant derivative along $\gamma_{X+Z}$. According to \cite{K08} (see also \cite{JL89}), there are neighborhoods $0\in O\subset T_q \M$ and $q\in U\subset \M$ so that the map $\varphi: O\longrightarrow U$ defined by
  $$\varphi(X+Z):=\gamma_{X+Z}(1)$$ 
  is a diffeomorphism, and satisfies the parabolic scaling $\varphi(tX+t^2Z)=\gamma_{X+Z}(t)$ whenever either side is defined.  Such a curve $\gamma_{X+Z}$ will be called a 
\emphs{parabolic geodesic}. 

Fix a horizontal (resp. vertical) orthonormal frame $\{X_1(q),\dots,X_\rh(q)\}$ (resp. $\{Z_1(q),\dots,Z_\rv(q)\}$) at $q$ and extend these vectors to be parallel  along parabolic geodesics 
 starting at $q$.  It is known (cf. \cite{K08}) that such extensions necessarily are smooth in a neighbourhood of $q \in \M$.  In this way, we obtain a smooth  local  frame $\{X_1,\dots,X_\rh, Z_1,\dots,Z_\rv\}$ for $T\M=\Ho\oplus\Vr$. Note that the corresponding dual frame  $\{\theta^1,\dots,\theta^\rh,\eta^1,\dots,\eta^\rv\}$ is also parallel along parabolic geodesics. In the following we call such a frame a \emphs{special frame}. 

We define a privileged coordinate system $(x,z)$ on $U$ by composing the inverse of $\varphi$ with the map $\lambda: T_q\M\longrightarrow \R^{\rh+\rv}$ defined by $\lambda(W):=(\theta^\alpha(W),\eta^i(W))$  (cf. \cite[Example 10.31 (ii)]{ABB20}).  Note that if we consider another horizontal (resp. vertical) frame at $q$, then these frames are related  via an orthogonal transformation. Hence, the resulting coordinates are related  via a block diagonal matrix with orthogonal entries, i.e. a matrix of the form
\begin{equation}
\begin{pmatrix}
A&0\\
0&B
\end{pmatrix} \in {\bf O}(\rh+\rv)
\end{equation}
with $A\in {\bf O}(\rh)$ and $B\in {\bf O}(\rv)$,  where ${\bf O}(k)$ with $k \in \mathbb{N}$ denotes the group of orthogonal $k \times k$ real matrices. 
  
From now on we will work with this special frame and co-frame and frequently write
\begin{equation}
J_i:=J_{Z_i}\text{ for }i=1,\cdots,\rv.
\end{equation}

The infinitesimal generator $\tilde{P}$ of the $\R^+$-action $\delta_t$ (\cref{def:dilation}) on $\R^{\rh+\rv}$ is given in these coordinates by
\begin{equation}
\tilde{P}_{(x,z)} := \sum_{\alpha=1}^{\rh}x^\alpha\frac{\partial}{\partial x^\alpha}+2\sum_{i=1}^{\rv}z^i\frac{\partial}{\partial z^i}.
\end{equation}
We adapt the notion of homogeneity to tensor fields on $\M$. We will denote by $P$ the pullback of $\tilde{P}$ to $\s{T\M}$ and say that a tensor field  $\Theta$ is \emphs{homogeneous of order} $l = \ord(\Theta) \in \mathbb{Z}$ with respect to the above dilations if
\begin{equation}\label{order}
\Lie_P(\Theta)=l\Theta,
\end{equation}

\begin{prop}\label{order_pro}
Whenever they make sense, the following properties hold.
\begin{enumerate}
\item Let $X$ and $Y$ be homogeneous vector fields. Then $[X,Y]$ is homogeneous of order $\ord(X)+\ord(Y)$.
\item Let $X$ (resp. $\nu$) be a homogeneous vector field (resp. $1$-form). Then $\nu(X)$ is homogeneous of order $\mathrm{ord}(\nu	) + \mathrm{ord}(X)$.
\item Let $f$ (resp. $X$) be a homogeneous function (resp. vector field). Then $fX$ is homogeneous of order $\ord(f)+\ord(X)$.
\end{enumerate}
\end{prop}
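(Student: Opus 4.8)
The plan is to prove each of the three homogeneity properties directly from the definition \eqref{order}, namely that a tensor field $\Theta$ is homogeneous of order $l$ precisely when $\Lie_P(\Theta) = l\Theta$. The key tool throughout will be the standard derivation and naturality properties of the Lie derivative: $\Lie_P$ is a derivation with respect to the tensor product and commutes with contractions. Since the generator $P$ plays the role of an Euler-type vector field for the parabolic dilation structure, everything reduces to verifying Leibniz-type rules for $\Lie_P$.

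For property (1), I would start from the well-known identity $\Lie_P([X,Y]) = [\Lie_P X, Y] + [X, \Lie_P Y]$, which holds because the Lie derivative of a vector field along $P$ is just the Lie bracket, $\Lie_P X = [P,X]$, and the Jacobi identity gives $[P,[X,Y]] = [[P,X],Y] + [X,[P,Y]]$. Substituting $\Lie_P X = \ord(X) X$ and $\Lie_P Y = \ord(Y) Y$ then yields
\begin{equation}
\Lie_P([X,Y]) = \big(\ord(X) + \ord(Y)\big)[X,Y],
\end{equation}
which is exactly the claim. For property (3), I would use that for a function $f$ and vector field $X$ the Lie derivative satisfies the Leibniz rule $\Lie_P(fX) = (\Lie_P f) X + f \Lie_P X = (Pf) X + f[P,X]$; homogeneity of $f$ means $Pf = \ord(f) f$ (here $\Lie_P f = P f$ for functions), and homogeneity of $X$ gives $[P,X] = \ord(X) X$, so $\Lie_P(fX) = (\ord(f) + \ord(X)) fX$.

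For property (2), the cleanest route is to write $\nu(X)$ as the contraction of the tensor product $\nu \otimes X$. Because $\Lie_P$ is a derivation over $\otimes$ and commutes with the contraction map, I get
\begin{equation}
\Lie_P\big(\nu(X)\big) = (\Lie_P \nu)(X) + \nu(\Lie_P X) = \ord(\nu)\,\nu(X) + \ord(X)\,\nu(X),
\end{equation}
using that $\Lie_P \nu = \ord(\nu)\nu$ and $\Lie_P X = \ord(X) X$ by hypothesis. Since $\nu(X)$ is a function, its being homogeneous of order $\ord(\nu) + \ord(X)$ is precisely this identity. Alternatively, one could differentiate the defining pairing directly, but the contraction formulation makes the bookkeeping transparent and avoids any ambiguity about how $\Lie_P$ acts on one-forms.

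I do not expect any serious obstacle here, since the statement is essentially a packaging of the derivation properties of the Lie derivative combined with the defining eigenvalue equation \eqref{order}. The only point requiring mild care is the phrase ``whenever they make sense'': the Lie derivative and bracket are defined pointwise in terms of the flow of $P$, so one should note that $P$ (the pullback of $\tilde P$) is a genuine smooth vector field on $U$ and that all operations take place on the common domain of the special frame. The conceptual content is simply that $P$ generates the parabolic scaling and that homogeneous objects are its eigen-tensors, so the grading is multiplicative under bracket, pairing, and multiplication — a standard feature of Euler vector fields in graded geometry.
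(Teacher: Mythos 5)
Your proposal is correct: all three statements follow, exactly as you argue, from the eigen-tensor definition \eqref{order} combined with the standard facts that $\Lie_P X = [P,X]$, that $\Lie_P$ is a derivation over tensor products, and that it commutes with contractions (the Jacobi identity handling the bracket case). The paper itself states \cref{order_pro} without any proof, treating these as standard properties of the Euler-type field $P$, so your argument is precisely the routine verification the authors leave implicit; there is nothing to compare beyond noting that your write-up supplies the details they omit.
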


We now prove a lemma crucial for our asymptotic analysis on H-type foliations, (cf. \cite{JL89,K08}):
\begin{lem}\label{generator}
In the above privileged coordinates, it holds:
\begin{equation}
\theta^\alpha(P) = x^\alpha, \eta^i(P), =z^i \text{ and }\omega^a_b(P)=0.
\end{equation}
Here $\omega^a_b$ denotes the connection $1$-forms w.r.t. the constructed special frame, i.e. 
\begin{equation}
\nabla X_b = X_a \otimes \omega^a_b. 
\end{equation}
In particular, at $p \in \M$ with the coordinates $(x,z) = (x^1,\dots,x^\rh,z^1,\dots,z^\rv) \in \R^{\rh+\rv}$ we can write $$P_p = \sum_{\alpha=1}^{\rh} x^\alpha X_\alpha+\sum_{i=1}^{\rv} z^i Z_i.$$
\end{lem}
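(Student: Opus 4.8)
The plan is to identify the vector $P_p$ geometrically as the velocity at time $1$ of the parabolic geodesic through $p$, and then to read off all three assertions from the parallelism built into the special frame.

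First I would establish the identification $P_p=\dot\gamma_{X+Z}(1)$. Let $p\in U$ have privileged coordinates $(x,z)$, and set $X=\sum_\alpha x^\alpha X_\alpha(q)\in\Ho_q$ and $Z=\sum_i z^i Z_i(q)\in\Vr_q$, so that $p=\varphi(X+Z)=\gamma_{X+Z}(1)$. Since $P$ is the pullback of the generator $\tilde P$ of the one-parameter group $s\mapsto\delta_{e^s}$, the integral curve of $P$ through $p$ is $s\mapsto \Psi^{-1}\big(\delta_{e^s}(x,z)\big)=\varphi\big(e^sX+e^{2s}Z\big)$, using $\Psi^{-1}=\varphi\circ\lambda^{-1}$. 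The scaling property $\varphi(tX+t^2Z)=\gamma_{X+Z}(t)$ turns this curve into $s\mapsto\gamma_{X+Z}(e^s)$, whose velocity at $s=0$ is exactly $\dot\gamma_{X+Z}(1)$. Hence $P_p=\dot\gamma_{X+Z}(1)$.

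Next I would compute $\dot\gamma_{X+Z}(t)$ in the special frame. Writing $V=\dot\gamma_{X+Z}$, the defining condition $D_t^2 V=0$ says that $D_tV$ is parallel along $\gamma_{X+Z}$, while $D_tV|_{t=0}=Z$; since the $Z_i$ are parallel along the geodesic, the parallel transport of $Z=\sum_i z^iZ_i(q)$ is $\sum_i z^iZ_i(\gamma_{X+Z}(t))$, so $D_tV=\sum_i z^iZ_i$. Expanding $V=\sum_\alpha a^\alpha X_\alpha+\sum_i b^iZ_i$ and using that the entire special frame is parallel, this forces $\dot a^\alpha=0$ and $\dot b^i=z^i$, with initial data $a^\alpha(0)=x^\alpha$, $b^i(0)=0$ coming from $V(0)=X$. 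Therefore $a^\alpha\equiv x^\alpha$ and $b^i(t)=tz^i$, and evaluation at $t=1$ yields the asserted decomposition $P_p=\sum_\alpha x^\alpha X_\alpha+\sum_i z^iZ_i$. Pairing this with the dual special coframe gives $\theta^\alpha(P)=x^\alpha$ and $\eta^i(P)=z^i$ at once.

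Finally, for the connection one-forms I would use $\omega^a_b(P_p)=\nu^a(\nabla_{P_p}Y_b)$. Since $P_p=\dot\gamma_{X+Z}(1)$ is tangent to the parabolic geodesic along which every frame field $Y_b$ is parallel, we have $\nabla_{P_p}Y_b=(D_tY_b)|_{t=1}=0$, whence $\omega^a_b(P)=0$. The only genuinely delicate step is the first one: one must correctly track the reparametrization $t=e^s$ between the dilation parameter and the flow parameter, and verify that the pullback of $\tilde P$ really does have $s\mapsto\gamma_{X+Z}(e^s)$ as its integral curve through $p$. Once $P_p=\dot\gamma_{X+Z}(1)$ is secured, the remaining assertions follow immediately from the parallelism of the special frame along parabolic geodesics.
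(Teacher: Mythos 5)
Your proposal is correct and follows essentially the same route as the paper: both identify $P$ with the velocity of parabolic geodesics (the paper derives $P_{\gamma_W(t)}=t\,\dot\gamma_W(t)$ by differentiating $\Psi^{-1}\circ\delta_t$, you obtain the $t=1$ instance via the dilation flow), then exploit $D_t^2\dot\gamma=0$ together with parallelism of the special frame to solve the resulting ODEs, and finally get $\omega^a_b(P)=0$ from the frame being parallel along the geodesic. The only cosmetic difference is that you expand $\dot\gamma$ in the parallel frame while the paper sets up the equivalent ODEs for the parallel coframe evaluated on $\dot\gamma$.
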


\begin{proof} It will suffice to prove the statement along parabolic geodesics. Construct a privileged coordinate system on a neighborhood $U$ around $q \in \M$ as explained above, and denote the coordinate map by $\Psi := \lambda \circ \varphi^{-1} \colon U \rightarrow \R^{\rh+\rv}$. 

Let $$W := \sum x^\alpha X_\alpha(q) + \sum z^iZ_i(q) \in T_q\M,$$ be the vector written in the special frame $\{X_1,\cdots,X_\rh, Z_1,\cdots,Z_\rv\}$, where $(x,z)=\Psi(p)\in \R^{\rh+\rv}$.
The parabolic geodesic generated by $W$ can be written in local coordinates as
\begin{equation}
\Psi(\gamma_W(t)) = \lambda(tX(q) + t^2Z(q)) = (tx,t^2z) = \delta_t (x,z).
\end{equation}

By definition
\begin{equation}
\tilde{P}_{\Psi(\gamma_W(t))} = \sum t x^\alpha \frac{\partial}{\partial x^\alpha} + \sum 2t^2 z^i \frac{\partial}{\partial z^i}
\end{equation}
and so
\begin{equation}
\begin{split}\label{eq: P}
\dot\gamma_W(t) &= \frac{d}{dt} (\Psi^{-1} \circ \delta_t)(x,z) \\
&= \Psi^* \left( \sum x^\alpha \frac{\partial}{\partial x^\alpha} + \sum 2t z^i \frac{\partial}{\partial z^i} \right)_{\Psi(\gamma_W(t))} \\
&= t^{-1} (\Psi^*\tilde{P})_{\Psi(\gamma_W(t))} \\
&= t^{-1} P_{\gamma_W(t)}.
\end{split}
\end{equation}

We now observe that for any $Y$ along the geodesic $\gamma_W(t)$ in the special frame one has
\begin{equation}
\frac{d}{dt} \langle D_t\dot\gamma_W, Y \rangle = (\nabla_{\dot\gamma_W} g) (D_t\dot\gamma_W, Y) + \langle D^2_t\dot\gamma_W, Y \rangle + \langle D_t\dot\gamma_W, D_tY \rangle = 0
\end{equation}
and so it follows from the initial condition that $D_t\dot\gamma_W = Z$ is parallel, or equivalently the vector field $\Psi_*Z$ has constant coefficients in $\R^{\rh+\rv}$.  For any 1-form $\nu$ parallel along $\gamma_W$ we have that
\begin{equation}
\frac{d}{dt}(\nu(\dot\gamma_W(t))) = \nu(D_t\dot\gamma_W(t)) = \nu(Z).
\end{equation}
We recover a system of equations for the special co-frame $\{\theta^\alpha,\eta^i\}$
\begin{equation}
\begin{cases} 
\frac{d}{dt}(\theta^\alpha(\dot\gamma_W(t))) &= 0 \\
\theta^\alpha(\dot{\gamma}_W(0)) &= x^\alpha
\end{cases}
\qquad
\begin{cases} 
\frac{d}{dt}(\eta^i(\dot\gamma_W(t))) &= z^i \\
\eta^i(\dot{\gamma}_W(0)) &= 0
\end{cases}
\end{equation}
from which we obtain the expressions
\begin{equation}
\theta^\alpha(\dot\gamma_W(t)) = x^\alpha, \qquad
\eta^i(\dot\gamma_W(t)) = tz^i.
\end{equation}

Applying this with \eqref{eq: P}, 
\begin{equation}
\theta^\alpha(P_{\gamma_W(t)}) = tx^\alpha, \qquad
\eta^i(P_{\gamma_W(t)}) = t^2z^i,
\end{equation}
and so we can conclude
\begin{equation}
P_{\gamma_W(t)} = \sum_{\alpha=1}^{\rh} t x^\alpha X_\alpha+\sum_{i=1}^{\rv} t^2 z^i Z_i.
\end{equation}
In particular, for $p \in U$ with parabolic coordinates $(x,z)$, we have
\begin{equation}
P_p = P_{\gamma_W(1)} = \sum_{\alpha=1}^{\rh} x^\alpha X_\alpha+\sum_{i=1}^{\rv} z^i Z_i
\end{equation}
as desired.

For the connection forms, observe that
$$
\omega^a_b(P)=g(X_{b},\nabla_{P}X_a)=tg(X_{b},D_{t}X_a)
$$
follows from the fact $P=t\dot\gamma_W$ for some $t>0$ with initial vector $W$. Since the frame is parallel along any parabolic geodesic $\gamma_W$, we obtain that $D_{t}X_a=0$ which concludes the proof.
\end{proof}

As mentioned in~\cite{JL89}, if $\nu$ is a differential form, then the homogeneous part $\nu^{(l)}$ of degree $l>0$ of its parabolic Taylor expansion can be computed by the formula
\begin{equation}\label{order_comp}
\nu^{(l)}=\frac{1}{l}\left(\Lie_P(\nu)\right)^{(l)}=\frac{1}{l}\left(P\lrcorner \hspace{0,5mm}d\nu+d(P\lrcorner\hspace{0,5mm} \nu)\right)^{(l)}\text{ for }l\geq 1.
\end{equation}
 Furthermore, using the structure equations (see \cref{Prop_1_Appendix} in the 
Appendix) of the Riemannian foliation $\M$ (w.r.t. the special frame and co-frame):
\begin{equation}\label{GL_expansion_differentials}
\begin{cases}
d\theta^\alpha &= \theta^\beta \wedge \omega_\beta^\alpha, \\
d\eta^i &= \frac{1}{2} J^i_{\alpha\beta} \theta^\alpha \wedge \theta^\beta + \eta^j \wedge \omega_j^i, \\
d\omega_a^b &= \frac{1}{2} R_{\alpha\beta a}^b \theta^\alpha \wedge \theta^\beta + R_{\alpha ia}^b \theta^\alpha \wedge \eta^i + \frac{1}{2} R_{jka}^b \eta^j \wedge \eta^k + \omega_a^c \wedge \omega_c^b. \\
\end{cases}
\end{equation}

We then obtain exact expressions for the homogeneous parts of the special co-frame and connection $1$-forms.
\begin{prop} \label{taylor_exp}
In privileged coordinates $(x^\alpha,z^i)$, we have $\theta^{\alpha(0)} = \eta^{i(0)} = 0$ and the higher order components and connection $1$-forms are given for $l \geq 1$ by
\begin{equation}
\begin{cases} \theta^{\alpha(l)} &= \frac{1}{l}\left( dx^\alpha + x^\beta \omega_{\beta}^{\alpha} \right)^{(l)}, \\
\eta^{i(l)} &= \frac{1}{l} \left( dz^i + z^j \omega_{j}^{i} + J_{\alpha\beta}^{i} x^\alpha \theta^\beta \right)^{(l)}, \\
\omega_{a}^{b(l)} &= \frac{1}{l} \left( R_{\alpha\beta a}^{b} x^\alpha \theta^\beta + R_{\alpha ia}^{b} x^\alpha \eta^i + R_{i\alpha a}^{b} z^i \theta^\alpha + R_{jka}^{b} z^j \eta^k \right)^{(l)}. \\
\end{cases}
\end{equation}
\end{prop}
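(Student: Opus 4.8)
The plan is to apply the homogeneity formula \eqref{order_comp} to each of the one-forms $\theta^\alpha$, $\eta^i$, and $\omega_a^b$ in turn, thereby reducing everything to Cartan's identity $\Lie_P \nu = P \lrcorner d\nu + d(P \lrcorner \nu)$ fed by the two inputs already at hand: the evaluations $\theta^\alpha(P) = x^\alpha$, $\eta^i(P) = z^i$, $\omega_a^b(P) = 0$ from \cref{generator}, and the structure equations \eqref{GL_expansion_differentials}. Since \eqref{order_comp} is valid only for $l \geq 1$, I would first dispose of the order-$0$ claim separately: in parabolic coordinates every Taylor coefficient has nonnegative weight, while the lowest-weight exact one-forms are $dx^\alpha$ (weight $1$) and $dz^i$ (weight $2$); hence no nonzero homogeneous one-form of order $\leq 0$ can exist, forcing $\theta^{\alpha(0)} = \eta^{i(0)} = 0$.

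For $\theta^\alpha$, \cref{generator} gives $P \lrcorner \theta^\alpha = x^\alpha$, so $d(P \lrcorner \theta^\alpha) = dx^\alpha$; and using $d\theta^\alpha = \theta^\beta \wedge \omega_\beta^\alpha$ together with the sign rule $P \lrcorner (\theta^\beta \wedge \omega_\beta^\alpha) = \theta^\beta(P)\,\omega_\beta^\alpha - \omega_\beta^\alpha(P)\,\theta^\beta$, the second summand dies because $\omega_\beta^\alpha(P) = 0$, leaving $P \lrcorner d\theta^\alpha = x^\beta \omega_\beta^\alpha$. Summing and inserting into \eqref{order_comp} yields the stated formula for $\theta^{\alpha(l)}$. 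The computation for $\eta^i$ is parallel: $P \lrcorner \eta^i = z^i$ contributes $dz^i$, the term $\eta^j \wedge \omega_j^i$ contributes $z^j \omega_j^i$, and the term $\tfrac12 J^i_{\alpha\beta}\,\theta^\alpha\wedge\theta^\beta$ contributes $\tfrac12 J^i_{\alpha\beta}(x^\alpha\theta^\beta - x^\beta\theta^\alpha)$, which collapses to $J^i_{\alpha\beta}\, x^\alpha \theta^\beta$ once one invokes the skew-symmetry $J^i_{\alpha\beta} = -J^i_{\beta\alpha}$ coming from the H-type condition \eqref{H_type_cond}.

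The richest case is $\omega_a^b$. Here $P \lrcorner \omega_a^b = 0$ kills the $d(P\lrcorner \omega_a^b)$ term, and, since interior product with each factor vanishes, it also kills the quadratic term $\omega_a^c \wedge \omega_c^b$; so only $P \lrcorner$ of the three curvature terms of \eqref{GL_expansion_differentials} survives. The $\theta\wedge\theta$ and $\eta\wedge\eta$ terms simplify exactly as in the $\eta^i$ case, via the antisymmetry of $R_{\alpha\beta a}^b$ and $R_{jka}^b$ in their first two indices, producing $R_{\alpha\beta a}^b\, x^\alpha \theta^\beta$ and $R_{jka}^b\, z^j \eta^k$. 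The mixed term $R_{\alpha i a}^b\,\theta^\alpha \wedge \eta^i$ gives $R_{\alpha i a}^b(x^\alpha \eta^i - z^i \theta^\alpha)$; rewriting $-R_{\alpha i a}^b = R_{i \alpha a}^b$ (again antisymmetry in the first two curvature slots) turns the second piece into $R_{i\alpha a}^b\, z^i \theta^\alpha$, which is precisely the third summand in the claimed expression for $\omega_a^{b(l)}$. Assembling the four pieces and applying \eqref{order_comp} finishes the proof.

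I expect the main obstacle to be purely bookkeeping: keeping the interior-product signs straight in $P \lrcorner (\nu_1 \wedge \nu_2) = \nu_1(P)\,\nu_2 - \nu_2(P)\,\nu_1$ for one-forms, and correctly invoking the various antisymmetries both to merge the factor-of-$\tfrac12$ terms and to generate the $z^i\theta^\alpha$ contribution out of the mixed curvature term. No genuinely new idea beyond \cref{generator} and \eqref{GL_expansion_differentials} is required, but one must remember that \eqref{order_comp} only covers $l \geq 1$, so the order-$0$ vanishing has to be argued independently by the weight count above.
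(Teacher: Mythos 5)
Your proposal is correct and follows essentially the same route as the paper: the paper likewise settles the order-zero claim by noting that $\theta^\alpha,\eta^i$ are combinations of $dx^\alpha$ (order $1$) and $dz^i$ (order $2$), and obtains the three formulas by feeding the structure equations \eqref{GL_expansion_differentials} and the evaluations $\theta^\alpha(P)=x^\alpha$, $\eta^i(P)=z^i$, $\omega^a_b(P)=0$ of \cref{generator} into \eqref{order_comp}, exactly as you do (the paper merely leaves the Cartan-formula bookkeeping implicit). One cosmetic point: the skew-symmetry $J^i_{\alpha\beta}=-J^i_{\beta\alpha}$ follows from the definition $g(J_{Z_i}X_\alpha,X_\beta)=g(Z_i,T(X_\alpha,X_\beta))$ together with the antisymmetry of the torsion, not from the H-type condition \eqref{H_type_cond}.
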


\begin{proof} 
 The identity $\theta^{\alpha(0)} = \eta^{i(0)} = 0$ follows from the observation that  the one forms $\theta^\alpha$ and $\eta^i$ are linear combinations of $dx^\alpha$ and $dz^i$ 
which are of homogeneous order one and two, respectively. The remaining identities follow from \cref{GL_expansion_differentials} and \cref{order_comp}. 
\end{proof}

Applying the proposition, we recover the particular expressions.
\begin{table}[h]
\centering
\begin{tabular}{|c||c|c|c|}
    \hline
    $l$ & $\theta^{\alpha(l)}$ & $\eta^{i(l)}$ & $\omega^{b(l)}_a$\\
    \hline \hline
    1 & $dx^{\alpha}$ & 0 & 0 \\
    \hline
    2 & $0$ & $\frac{1}{2}(dz^i + J_{\alpha \beta}^i(q) x^{\alpha} dx^{\beta})$ & $\frac{1}{2} R_{\alpha \beta a}^b(q) x^{\alpha} dx^{\beta}$ \\
    \hline
    3 & $\frac{1}{6} R_{\gamma \delta \beta}^{\alpha}(q) x^{\beta} x^{\gamma} dx^{\delta}$ & $\frac{1}{3} J_{\alpha \beta}^{i(1)}x^{\alpha} dx^{\beta}$ & \\
    \hline
    4 & $\frac{1}{4} (z^j \omega_{j}^{i(2)} + J_{\alpha \beta}^i(q) x^{\alpha} \theta^{\beta(3)} + J_{\alpha \beta}^{i(2)} x^{\alpha} dx^{\beta})$ & & \\
    \hline
\end{tabular}
\end{table}

The following properties will be useful in the sequel.
 \begin{lem}\label{properties} In privileged coordinates $(x^\alpha,z^i)$, it holds:
 \begin{enumerate}
 \item The  homogeneous terms of $J_{\alpha\beta}^{i}$ of order one and two are given by
  \begin{align}
  J_{\alpha \beta}^{i(1)}&=x^\gamma X_\gamma(J_{\alpha\beta}^{i})(q), \\
  J_{\alpha \beta}^{i(2)}&=\frac{1}{2}\left(x^\gamma x^\delta X_\gamma X_\delta(J_{\alpha\beta}^{i})(q)+z^j Z_j(J_{\alpha\beta}^{i})(q)\right).
   \end{align}
  \item The connection $1$-forms $\omega_{a}^{b}$ vanish at the point $q$.
 \item The horizontal derivatives of $J_{\alpha\beta}^{i}$ at $q$ are related to the torsion $T$ by
 $$X_\gamma(J_{\alpha\beta}^{i})(q)=g_q\big{(}Z_i,(\nabla_{X_\gamma}T)(X_\alpha,X_\beta)\big{)}.$$
 \end{enumerate}
 \end{lem}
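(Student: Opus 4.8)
The plan is to reduce all three statements to the parabolic Taylor expansion of the scalar functions $J^i_{\alpha\beta}$ together with the homogeneous expansions of the co-frame and connection forms already recorded in \cref{taylor_exp}. The only geometric input I need is the relation between the special frame and the coordinate vector fields at the base point $q$. Evaluating \cref{taylor_exp} at $x=z=0$ gives $\theta^\alpha|_q = dx^\alpha$ and $\eta^i|_q = \tfrac12 dz^i$ (the order $0$ and $1$ parts vanish and all higher components carry positive-degree monomials), so by duality
\[ X_\alpha|_q = \frac{\partial}{\partial x^\alpha}\Big|_q, \qquad Z_i|_q = 2\,\frac{\partial}{\partial z^i}\Big|_q. \]
Equivalently, the factor $2$ comes from the parabolic reparametrisation $\varphi(t^2 Z)=\gamma_Z(t)$, which forces the vertical coordinate curve $s\mapsto\varphi(sZ_i(q))=\gamma_{Z_i(q)}(\sqrt s)$ to have initial velocity $\tfrac12 Z_i(q)$. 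This vertical normalisation is exactly what makes the stated form of the second-order term consistent, and tracking it is the only genuinely delicate point in the lemma.

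For part (1) I expand $f:=J^i_{\alpha\beta}$ in the coordinates $(x,z)$ and collect monomials by parabolic weight ($w(x^\alpha)=1$, $w(z^i)=2$): the weight-one part is $\partial_{x^\gamma}f(q)\,x^\gamma$ and the weight-two part is $\tfrac12\partial_{x^\gamma}\partial_{x^\delta}f(q)\,x^\gamma x^\delta + \partial_{z^j}f(q)\,z^j$. Converting coordinate derivatives to frame derivatives via the identifications above turns the linear term into $x^\gamma X_\gamma(f)(q)$ and the vertical part of the quadratic term into $\partial_{z^j}f(q)z^j = \tfrac12 z^j Z_j(f)(q)$, as claimed. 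The one nontrivial identity is $x^\gamma x^\delta \partial_{x^\gamma}\partial_{x^\delta}f(q)=x^\gamma x^\delta X_\gamma X_\delta(f)(q)$; I obtain it by restricting $f$ to the horizontal parabolic geodesic $\gamma(t)=\Psi^{-1}(tx,0)$, whose velocity is $\dot\gamma(t)=x^\gamma X_\gamma$ with constant coefficients in the special frame by the proof of \cref{generator}. Differentiating $t\mapsto f(\gamma(t))$ twice at $t=0$ and comparing with the coordinate expression $f(tx,0)$ yields the identity after symmetrisation in $(\gamma,\delta)$, the antisymmetric part $x^\gamma x^\delta[X_\gamma,X_\delta]f$ dropping out against the symmetric tensor $x^\gamma x^\delta$.

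For part (2) I simply inspect the expansion of $\omega^b_a$ in \cref{taylor_exp}: it has no homogeneous components of order $0$ or $1$, its order-two component is $\tfrac12 R^b_{\alpha\beta a}(q)x^\alpha dx^\beta$ which carries an $x$-factor, and every component of order $\ge 3$ necessarily contains a monomial factor of positive degree, since a monomial-free $dx^\alpha$ (resp.\ $dz^i$) term would have order $1$ (resp.\ $2$). Hence all coefficient functions of $\omega^b_a$ vanish at the origin, i.e.\ $\omega^b_a|_q=0$ as a $1$-form.

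Part (3) is then formal. Using the defining relation $g(J_ZX,Y)=g(Z,T(X,Y))$ I rewrite $J^i_{\alpha\beta}=g(Z_i,T(X_\alpha,X_\beta))$ and differentiate along $X_\gamma$; metric compatibility $\nabla g=0$ together with the definition of the covariant derivative of $T$ give
\begin{align*}
X_\gamma(J^i_{\alpha\beta}) &= g(\nabla_{X_\gamma}Z_i,T(X_\alpha,X_\beta)) + g\big(Z_i,(\nabla_{X_\gamma}T)(X_\alpha,X_\beta)\big) \\
&\quad + g\big(Z_i,T(\nabla_{X_\gamma}X_\alpha,X_\beta)+T(X_\alpha,\nabla_{X_\gamma}X_\beta)\big).
\end{align*}
Evaluating at $q$, part (2) gives $\nabla_{X_\gamma}X_a|_q=\omega^b_a(X_\gamma)|_q\,X_b=0$ for every frame field $X_a$, so the first and third groups of terms vanish and only $g_q\big(Z_i,(\nabla_{X_\gamma}T)(X_\alpha,X_\beta)\big)$ survives. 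I expect the main obstacle in the whole lemma to be precisely the careful bookkeeping of the vertical normalisation factor $2$ and the second-order horizontal identity in part (1); once these are in place, parts (2) and (3) follow directly from \cref{taylor_exp} and the metric property of the Bott connection.
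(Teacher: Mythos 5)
Your proposal is correct, and parts (2) and (3) coincide with the paper's own argument: the paper likewise reads off $\omega_a^{b}(q)=0$ by comparing the coordinate expansion $\omega_a^b = C_{a\alpha}^b dx^\alpha + D_{ai}^b dz^i$ against the homogeneous components in \cref{taylor_exp}, and proves (3) by differentiating $g(Z_i,T(X_\alpha,X_\beta))$ with metric compatibility and then killing all $\nabla_{X_\gamma}$--terms of frame fields at $q$ via (2). Where you genuinely diverge is part (1). The paper's mechanism is the Euler-operator recursion: applying $\Lie_P$ to the parabolic Taylor series gives $f^{(l)}=\tfrac{1}{l}\bigl(P(f)\bigr)^{(l)}$, and then the formula $P = x^\alpha X_\alpha + z^i Z_i$ from \cref{generator} is fed into this identity twice (once for $l=1$, and again inside the $l=2$ computation to expand $\bigl(X_\gamma(J^i_{\alpha\beta})\bigr)^{(1)}$), which produces both claimed formulas uniformly and would extend to every order $l$ with no additional input. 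You instead work with the raw coordinate Taylor expansion regrouped by parabolic weight, convert coordinate derivatives to frame derivatives through the pointwise identifications $X_\alpha|_q=\partial_{x^\alpha}|_q$, $Z_i|_q = 2\partial_{z^i}|_q$ (your bookkeeping of the vertical factor $2$, equivalently $\eta^i|_q=\tfrac12 dz^i$, is correct and consistent with $\hat Z_i = 2\partial_{z^i}$ in the paper), and handle the second-order horizontal term by restricting to the horizontal parabolic geodesic $t\mapsto\Psi^{-1}(tx,0)$, along which the velocity has constant frame coefficients by the proof of \cref{generator}, with the commutator term dying against the symmetric tensor $x^\gamma x^\delta$. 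Both routes ultimately rest on \cref{generator}; the paper's recursion is more systematic and order-uniform, while yours is more elementary and makes explicit the two points that the paper's streamlined proof leaves implicit, namely the vertical normalization and the symmetrization that reconciles $X_\gamma X_\delta$ with coordinate second derivatives.
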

 \begin{proof}
(1):  Let $f$ be a smooth function near $q\in \M$ and consider its  parabolic Taylor expansion around $q$ in privileged coordinates $(x^\alpha,z^i)$, namely: 
 $$f=f(q)+\sum_{l>0}f^{(l)},$$
 where $f^{(l)}$ denotes the $l$-homogeneous part of $f$. Applying the Lie derivative $\Lie_P$ to both sides of this expansion we find
  $$f^{(l)}=\frac{1}{l}\big{(}P(f)\big{)}^{(l)},\text{ for } l\geq 1.$$
 According to \cref{generator} and choosing $f=J_{\alpha\beta}^{i}$ we obtain
 $$J_{\alpha\beta}^{i(l)}=\frac{1}{l}\left(P(J_{\alpha\beta}^{i})\right)^{(l)}.$$
 Hence, it follows that
 \begin{align}
 J_{\alpha\beta}^{i(1)}=\left(P(J_{\alpha\beta}^{i})\right)^{(1)}
 =\big{(}x^\gamma X_\gamma(J_{\alpha\beta}^{i})+z^j Z_j(J_{\alpha\beta}^{i})\big{)}^{(1)}
 =x^\gamma X_\gamma(J_{\alpha\beta}^{i})(q) 
 \end{align}
 and similarly 
 \begin{align}
 J_{\alpha\beta}^{i(2)}&= \frac{1}{2}\big{(}x^\gamma X_\gamma(J_{\alpha\beta}^{i})+z^j Z_j(J_{\alpha\beta}^{i})\big{)}^{(2)} \\
 &=\frac{1}{2}\big{(}x^\gamma \big(X_\gamma(J_{\alpha\beta}^{i})\big)^{(1)}+z^j Z_j(J_{\alpha\beta}^{i})(q)\big{)}\\
 &=\frac{1}{2}\big{(}x^\gamma x^\delta X_\gamma X_\delta(J_{\alpha\beta}^{i})(q)+z^j Z_j(J_{\alpha\beta}^{i})(q)\big{)}.
 \end{align}

 (2): In privileged coordinates $(x^\alpha,z^i)$ we write $\omega_{a}^{b}=C_{a\alpha}^{b}dx^\alpha+D_{a i}^{b}dz^i$ 
 with some smooth functions $C_{a \alpha}^{b}$ and $D_{a i}^{b}$. Then, we obtain
 $$\omega_{a}^{b(1)}=C_{a \alpha}^{b}(q)dx^\alpha \hspace{2ex} \text{ and }\hspace{2ex} \omega_{a}^{b(2)}=C_{a \alpha}^{b(1)}dx^\alpha+D_{a i}^{b}(q)dz^i.$$
 However, according to \cref{taylor_exp} we have $\omega_{a}^{b(1)}=0$ and $\omega_{a}^{b(2)}=\frac{1}{2}R_{\alpha\beta a}^{b}(q)x^\alpha dx^\beta.$
 In particular, this shows that
 $C_{a \alpha}^{b}(q)=D_{a i}^{b}(q)=0,$
 i.e. $\omega_{a}^{b}(q)=0$.

 (3):  Since $\nabla$ is a metric connection, we have: 
\begin{align}
X_{\gamma}(J_{\alpha \beta}^i)
&= X_{\gamma} g\big{(} J_{Z_i}X_{\alpha}, X_{\beta} \big{)} = X_{\gamma} g\big{(} Z_i, T(X_{\alpha}, X_{\beta}) \big{)}\\
&= g \big{(} \nabla_{X_{\gamma}}Z_i,  T(X_{\alpha}, X_{\beta}) \big{)}+ g\big{(} Z_i, \nabla_{X_{\gamma}}(T(X_{\alpha}, X_{\beta})) \big{)}. 
\end{align}
At the point $q \in \M$ we have $\omega_{a}^{b}(q)=0$ according to (2) and therefore, 
\begin{equation}
(\nabla_{X_{\gamma}}X_\alpha)_q=(\nabla_{X_{\gamma}}X_\beta)_q =(\nabla_{X_{\gamma}}Z_i)_q=0
\end{equation}
showing the assertion. 
 \end{proof}

Now we compute the low order homogeneous parts in the parabolic Taylor series of the special frame $\{X_1,\ldots,X_\rh,Z_1,\ldots,Z_\rv\}$. For this, we first define the following linearly independent vector fields on $\R^{\rh+\rv}$:
\begin{equation}
\hat{X}_\alpha := \frac{\partial}{\partial x^\alpha} + J_{\alpha\beta}^i(q) x^\beta \frac{\partial}{\partial z^i} \text{ and } \hat{Z}_i := 2\frac{\partial}{\partial z^i}.
\end{equation}

Note that $\hat{X}_\alpha$ (resp. $\hat{Z}_i$) is homogeneous of degree $-1$ (resp. $-2$).  

\begin{rem}\label{Remark_3_5}
The advantage of the chosen privileged coordinates is that the nilpotent Lie algebra (that depends on the choice of privileged coordinates) will coincide with the $H$-type Lie algebra, which is the first algebraic and metric invariant at each  point $q \in \M$ as mentioned in the introduction. Moreover, the vector fields $\hat{X}_\alpha$ and $\hat{Z}_i$ are left invariant vector fields on the corresponding ($H$-type) Lie group $\mathbb{G}(q)$ with group law ``$\star$'' as in e.g. \cite{B96}.
\end{rem}

\begin{lem}\label{expansion}
In privileged coordinates $(x^\alpha,z^i)$, for the horizontal frame we can express the low order homogeneous parts as
\begin{equation}
X_\alpha^{(-1)}=\hat{X}_\alpha,\hspace{2mm}X_\alpha^{(0)}=\frac{1}{3}J_{\alpha \beta}^{i(1)}x^\beta \hat{Z}_i\; \text{ and }\; X_\alpha^{(1)}=f^\beta_\alpha \hat{X}_\beta+h^i_\alpha \hat{Z}_i,
\end{equation}
where 
\begin{align}
f^\beta_\alpha &= \frac{1}{6} x^\gamma x^\delta R_{\alpha\gamma\delta}^{\beta}(q), \\
h^i_\alpha &= \frac{1}{8} R_{\alpha\beta j}^{i}(q) z^j x^\beta + \frac{1}{24} J_{\beta\gamma}^i(q) R_{\alpha \beta^\prime \gamma^\prime}^\gamma (q) x^\beta x^{\beta^\prime} x^{\gamma^\prime} + \frac{1}{4} x^\beta J_{\alpha\beta}^{i(2)}. \\
\end{align}

Similarly, for the vertical frame it holds that 
\begin{equation} 
Z_i^{(-2)} = \hat{Z}_i \text{ and } Z_i^{(-1)} = Z_i^{(0)} = 0.
\end{equation} 
\end{lem}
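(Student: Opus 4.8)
The plan is to determine the frame components by duality against the coframe, whose homogeneous parts are already computed in \cref{taylor_exp} and the subsequent table. Each of $X_\alpha,Z_i$ is uniquely pinned down by the pairings $\theta^\beta(X_\alpha)=\delta^\beta_\alpha$, $\eta^j(X_\alpha)=0$, $\theta^\beta(Z_i)=0$, $\eta^j(Z_i)=\delta^j_i$. Expanding both frame and coframe into their parabolic Taylor series and using \cref{order_pro} (a form of order $p$ paired with a vector field of order $q$ is a function of order $p+q$), each identity splits into one equation per homogeneous order. Since the right-hand sides are of order $0$, every positive-order component must vanish, and this yields a triangular system determining $X_\alpha^{(q)}$ and $Z_i^{(q)}$ recursively.

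The computation rests on one elementary observation: the leading coframe parts $\theta^{\alpha(1)}=dx^\alpha$ and $\eta^{i(2)}=\tfrac12\big(dz^i+J^i_{\alpha\beta}(q)x^\alpha dx^\beta\big)$ are exactly the coframe dual to $\{\hat X_\alpha,\hat Z_i\}$; indeed, using the skew-symmetry $J^i_{\alpha\beta}=-J^i_{\beta\alpha}$ of the Clifford maps one checks directly that $\theta^{\alpha(1)}(\hat X_\beta)=\delta^\alpha_\beta$, $\eta^{i(2)}(\hat Z_j)=\delta^i_j$ and the mixed pairings vanish. Consequently, applying $\theta^{\alpha(1)}$ or $\eta^{i(2)}$ to any vector field expressed in the basis $\{\hat X_\beta,\hat Z_j\}$ simply reads off its coefficients, which is what makes the order matching effective.

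I would first dispose of the vertical frame and the two lowest horizontal parts. For the leading parts, the order-$0$ components of the duality relations together with the observation above force $X_\alpha^{(-1)}=\hat X_\alpha$ and $Z_i^{(-2)}=\hat Z_i$. For $Z_i^{(-1)},Z_i^{(0)}$ and $X_\alpha^{(0)}$ one matches the next one or two orders; here the crucial simplifications are that $\theta^{\alpha(2)}=0$ (from \cref{taylor_exp}) and that every $dx^\beta$ annihilates $\hat Z_j$, so that the $\theta$-equations kill the $\hat X$-coefficients and only the $\eta$-equations contribute. Matching the order-$2$ part of $\eta^j(X_\alpha)=0$ against $\eta^{j(3)}(\hat X_\alpha)=\tfrac13 J^{j(1)}_{\gamma\alpha}x^\gamma$ (read off from the table) and using skew-symmetry of $J^{(1)}$ gives $X_\alpha^{(0)}=\tfrac13 J^{i(1)}_{\alpha\beta}x^\beta\hat Z_i$; the vanishing $Z_i^{(-1)}=Z_i^{(0)}=0$ follows because $\eta^{j(p)}$ for $p\ge 3$ is built purely from $dx$'s and hence annihilates $\hat Z_i$.

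The genuinely laborious step is $X_\alpha^{(1)}=f^\beta_\alpha\hat X_\beta+h^i_\alpha\hat Z_i$. The coefficient $f^\beta_\alpha$ is routine: the order-$2$ part of $\theta^\beta(X_\alpha)=\delta^\beta_\alpha$ gives $f^\beta_\alpha=-\theta^{\beta(3)}(\hat X_\alpha)$, which after applying the curvature antisymmetry $R^\beta_{\gamma\alpha\delta}=-R^\beta_{\alpha\gamma\delta}$ becomes $\tfrac16 x^\gamma x^\delta R^\beta_{\alpha\gamma\delta}(q)$. The main obstacle is $h^i_\alpha$: here one matches the order-$3$ part of $\eta^j(X_\alpha)=0$, which reduces to $h^j_\alpha=-\eta^{j(4)}(\hat X_\alpha)$ (the term $\eta^{j(3)}(X_\alpha^{(0)})$ drops out since $X_\alpha^{(0)}\propto\hat Z_i$ while $\eta^{j(3)}\propto dx$). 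Evaluating the order-$4$ vertical coframe $\eta^{j(4)}$ from the table on $\hat X_\alpha$ is the delicate part: it has three contributions, from $z^k\omega^{j(2)}_k$, from $J^j_{\rho\sigma}(q)x^\rho\theta^{\sigma(3)}$, and from $J^{j(2)}_{\rho\sigma}x^\rho dx^\sigma$, each requiring the pairing $dx^\sigma(\hat X_\alpha)=\delta^\sigma_\alpha$ together with the explicit forms $\omega^{j(2)}_k=\tfrac12 R^j_{\rho\sigma k}(q)x^\rho dx^\sigma$ and $\theta^{\sigma(3)}=\tfrac16 R^\sigma_{\mu\nu\lambda}(q)x^\lambda x^\mu dx^\nu$ from \cref{taylor_exp}. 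Collecting these and rewriting them into the stated shape via the antisymmetries $R^j_{\rho\alpha k}=-R^j_{\alpha\rho k}$, $R^\sigma_{\mu\alpha\lambda}=-R^\sigma_{\alpha\mu\lambda}$, and $J^{j(2)}_{\rho\alpha}=-J^{j(2)}_{\alpha\rho}$ produces the three summands of $h^i_\alpha$; keeping the index bookkeeping straight through these relabelings is where essentially all the effort lies.
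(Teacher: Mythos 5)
Your proposal is correct and takes essentially the same route as the paper's own proof: the paper likewise writes $X_\alpha = s_\alpha^\beta \hat{X}_\beta + r_\alpha^j \hat{Z}_j$, applies $\theta^\gamma$ and $\eta^i$ (i.e.\ the duality relations) to this expansion and matches homogeneous orders against the coframe expansions of \cref{taylor_exp}, arriving at the same triangular system with the same key identities $f^\beta_\alpha = -\theta^{\beta(3)}(\hat{X}_\alpha)$ and $h^i_\alpha = -\eta^{i(4)}(\hat{X}_\alpha)$. Your explicit observation that $\{\theta^{\alpha(1)},\eta^{i(2)}\}$ is the dual coframe of $\{\hat{X}_\alpha,\hat{Z}_i\}$ is used implicitly in the paper's recursion, and all your sign and index manipulations (skew-symmetry of $J$, of $J^{(1)}$, $J^{(2)}$, and of $R$ in its first two lower indices) reproduce the stated coefficients correctly.
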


\begin{proof} 
Locally near  $0\in \R^{\rh+\rv}$ we consider the expansion:
\begin{equation}\label{frame}
X_\alpha = s_\alpha^\beta \hat{X}_\beta + r_\alpha^j \hat{Z}_j, 
\end{equation}
where $s_\alpha^\beta$ and $r_\alpha^{j}$ are smooth functions. The parabolic Taylor expansion of $s_\alpha^\beta$ and $r_\alpha^j$ around $0$ have the form: 
\begin{align}
s_\alpha^\beta &= s^{\beta(0)}_\alpha + s^{\beta(1)}_\alpha + s^{\beta(2)}_\alpha + \cdots \\
r_ \alpha^j &= r^{j(0)}_\alpha + r^{j(1)}_\alpha + r^{j(2)}_\alpha + \cdots.
\end{align}

Here $s^{\beta(l)}_\alpha$ (resp. $r^{j(l)}_\alpha$) (for $l \geq 0$) denotes the homogeneous part of order $l$ in the expansion. Applying $\theta^\gamma$ (resp. $\eta^i$) to both sides of \cref{frame} and taking the homogeneous parts of order $l$, we obtain recursive formulas for $s^{\gamma(l)}_{\alpha}$ and $r^{i(l)}_{\alpha}$ (for $l\geq 1$): 
\begin{align}
s^{\gamma(l)}_\alpha &= -\sum_{m=0}^{l-1} s^{\beta(m)}_\alpha \theta^{\gamma(l-m+1)}(\hat{X}_\beta) - \sum_{m=0}^{l} r^{j(m)}_\alpha \theta^{\gamma(l-m+2)}(\hat{Z}_j), \\
r^{i(l)}_\alpha &= -\sum_{m=0}^{l-1} s^{\beta(m)}_\alpha \eta^{i(l-m+1)} (\hat{X}_\beta) - \sum_{m=0}^{l-1} r^{j(m)}_\alpha \eta^{i(l-m+2)} (\hat{Z}_j)
\end{align}
with initial conditions $s^{\gamma(0)}_\alpha = \delta_{\alpha\gamma}$ and $r^{i(0)}_\alpha = 0$.  Combining these identities with the expressions for the homogeneous components of $\theta^\gamma$ and $\eta^i$ in \cref{taylor_exp} we find: 
\begin{equation}
s^{\beta(1)}_\alpha = 0, \quad r^{j(0)}_\alpha = r^{j(1)}_\alpha=0,\text{ and } r^{j(2)}_\alpha = \frac{1}{3} J_{\alpha\beta}^{j(1)} x^\beta.
\end{equation}
Hence, it follows that
\begin{equation}
s^{\beta(2)}_{\alpha}=-\theta^{\beta(3)}(\hat{X}_\alpha)=\frac{1}{6}R_{\alpha\gamma\delta}^{\beta}(q)x^\gamma x^\delta,
\end{equation}
and
\begin{align}
r^{j(3)}_\alpha &= -\eta^{j(4)}(\hat{X}_\alpha) \\
&= \frac{1}{8}R_{\alpha\beta i}^{j}(q)z^i x^\beta + \frac{1}{24}J_{\beta\gamma}^j(q) R_{\alpha\beta^\prime\gamma^\prime}^\gamma(q) x^\beta x^{\beta^\prime} x^{\gamma^\prime} + \frac{1}{4}x^\beta J_{\alpha\beta}^{j(2)}.
\end{align}

Now the assumption follows from the formula ($l\geq 1$)
\begin{equation}
X_\alpha^{(l)} = s_\alpha^{\beta(l+1)} \hat{X}_\beta + r_\alpha^{j(l+2)} \hat{Z}_j.
\end{equation}

The identities $Z_i^{(-2)}=\hat{Z}_i$ and $Z_i^{(-1)}=Z_i^{(0)}=0$ follow by a similar calculation.
\end{proof}

We constructed a graded step two nilpotent Lie algebra generated by the vector fields $X_1^{(-1)},\cdots,X_\rh^{(-1)}$ on $\R^{\rh+\rv}$; as was mentioned, this Lie algebra coincides with an $H$-type Lie algebra. The corresponding connected, simply connected Lie group is the metric tangent group (or cone), see \cite{B96} or \cite[Proposition 10.77]{ABB20}.\

The next theorem shows that the curvature tensor $R$ of the Bott connection is exactly the obstruction of the $H$-type foliation (with horizontal parallel torsion,  that is $\nabla_X T=0$ for all $X \in \s\Ho$) to be locally isometric to an $H$-type group in the sense of sub-Riemannian manifolds. We recall the definition (see \cite{BR13}): 
\vspace{1mm}\par 
Let $(\M_j, \mathcal{H}_j, \langle \cdot ,\cdot \rangle_j)$ for $j=1,2$ be sub-Riemannian manifolds. We call a smooth map $$\varphi: \M_1\rightarrow\M_2$$ {\it horizontal} if $\varphi_*(\mathcal{H}_1) \subset \mathcal{H}_2$. 

Moreover, $\varphi$ is called a (local) sub-Riemannian isometry, if it is a horizontal (local) diffeomorphism such that 
$\phi_*: (\mathcal{H}_1, \langle \cdot, \cdot \rangle_1) 
\longrightarrow (\mathcal{H}_2, \langle \cdot, \cdot \rangle_2)$ is an isometry.

\begin{thm} \label{thmC}
Let $\M$ be an H-type foliation with horizontally parallel torsion. Then $\M$ is locally isometric (as a sub-Riemannian manifold) to its tangent group if and only if the curvature tensor of the Bott connection vanishes, i.e. $R=0$.
\end{thm}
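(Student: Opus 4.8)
The plan is to prove the two implications separately, leaning on the fact that for an $H$-type foliation the Bott connection $\nabla$, its torsion $T$ and curvature $R$ are sub-Riemannian invariants: the vertical bundle $\Vr$ is intrinsically determined (as $T\M/\Ho$, realized through the bracket of horizontal fields), and the vertical metric is pinned down by the $H$-type normalization $J_Z^2=-\|Z\|^2\Id_\Ho$, so that the full metric $g$ and the uniquely characterized Bott connection depend only on the data $(\Ho,g_\Ho)$. For the direct implication I would first record that the tangent group $\mathbb{G}(q)$ is flat: declaring the left-invariant frame $\{\hat X_\alpha,\hat Z_i\}$ of \cref{Remark_3_5} to be parallel defines a connection on $\mathbb{G}(q)$ whose torsion is $-[\cdot,\cdot]_\Vr$ and which is metric and compatible, hence by uniqueness it is the Bott connection; a parallel frame has vanishing connection forms, so its curvature vanishes, in agreement with the observation in the introduction that step-two nilpotent groups carry vanishing horizontal curvature. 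A local sub-Riemannian isometry $\varphi\colon\M\to\mathbb{G}(q)$ preserves the data determining $\nabla$, so $\varphi^*\nabla^{\mathbb{G}}=\nabla$ and therefore $R=\varphi^*R^{\mathbb{G}}=0$.

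For the converse, assume $R\equiv 0$ and $\nabla_\Ho T=0$. The first step is to upgrade horizontal parallelism of the torsion to full parallelism. Evaluating the first Bianchi identity \eqref{First_Bianchi_identity} on two horizontal arguments $X,Y\in\s\Ho$ and one vertical argument $Z\in\s\Vr$, and using the torsion constraints $T(\Ho,\Vr)=T(\Vr,\Vr)=0$, one sees that the two cyclic terms with a horizontal outer derivative vanish, so the surviving identity reads $(\nabla_Z T)(X,Y)=0$; since $T$ is nontrivial only on $\Ho\times\Ho$, this gives $\nabla T=0$. The second step is to observe that flatness makes parallel transport locally path-independent, so the special frame, obtained by transporting an orthonormal frame at $q$ along parabolic geodesics, is in fact a parallel frame on a neighborhood of $q$, i.e. $\nabla X_\alpha=\nabla Z_i=0$ and all connection forms $\omega_a^b$ vanish identically. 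As $g$, $T$ and the frame are now all parallel, $J_{\alpha\beta}^i=g(Z_i,T(X_\alpha,X_\beta))$ has vanishing differential, so it is constant and equal to $J_{\alpha\beta}^i(q)$. This is consistent with the homogeneous expansions: by \cref{properties,expansion} the vanishing of $R$ and of the order-one and order-two parts of $J$ kills $X_\alpha^{(0)}$ and $X_\alpha^{(1)}$, and an induction on the recursions in \cref{taylor_exp,expansion} yields $\theta^\alpha=dx^\alpha$ and $\eta^i=\tfrac12(dz^i+J_{\alpha\beta}^i(q)x^\alpha dx^\beta)$, so that $X_\alpha=\hat X_\alpha$ and $Z_i=\hat Z_i$.

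With $\omega_a^b\equiv 0$ and $J$ constant, the definition $T(X,Y)=-[X_\Ho,Y_\Ho]_\Vr$ together with the compatibility conditions gives the exact bracket relations
\begin{equation*}
[X_\alpha,X_\beta]=-J_{\alpha\beta}^i(q)Z_i,\qquad [X_\alpha,Z_j]=0,\qquad [Z_i,Z_j]=0,
\end{equation*}
so the special frame has constant structure constants coinciding with those of the $H$-type Lie algebra $\mathfrak g(q)$. By the standard correspondence between a frame with constant structure constants and a local Lie group, a neighborhood of $q$ is diffeomorphic to a neighborhood of the identity in $\mathbb{G}(q)$ via a map carrying $\{X_\alpha,Z_i\}$ to $\{\hat X_\alpha,\hat Z_i\}$; since this map sends the orthonormal horizontal frame of $\M$ to that of $\mathbb{G}(q)$, it is a local sub-Riemannian isometry. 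I expect the main obstacle to be rigor in the converse: one must ensure that the formal truncation of the parabolic expansions (which by itself only gives agreement to all orders) genuinely reproduces the left-invariant frame, which is why I would route the core of the argument through the honestly parallel frame provided by flatness rather than through the Taylor data alone. A secondary delicate point is the invariance claim underlying the direct implication, namely that a local sub-Riemannian isometry of $H$-type foliations necessarily respects $\Vr$, $g_\Vr$, and hence $\nabla$.
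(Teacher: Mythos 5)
Your proof of the direction the paper actually proves ($R\equiv0\Rightarrow$ local isometry) is correct and structurally the same as the paper's: flatness yields a parallel adapted orthonormal frame, the torsion hypothesis forces the structure functions $J^i_{\alpha\beta}$ to be constant, and a frame with constant structure constants is identified with the left-invariant frame of the tangent $H$-type group. The differences are in the details, and they mostly buy rigor. Where the paper simply asserts that horizontally parallel torsion makes the $J^i_{\alpha\beta}$ constant, you prove it, upgrading $\nabla_\Ho T=0$ to $\nabla T=0$ via the first Bianchi identity with $R=0$; this is a correct and genuinely needed supplement, since in the parallel frame horizontal parallelism only kills the horizontal derivatives of $J^i_{\alpha\beta}$ (alternatively, the vertical derivatives vanish because horizontal brackets span $\Vr$ while all horizontal derivatives of $J^i_{\alpha\beta}$ vanish identically near $q$). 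For the final identification the paper integrates the structure equations $d\theta^\alpha=0$ and $d\eta^i=\tfrac{1}{2}J^i_{\alpha\beta}\theta^\alpha\wedge\theta^\beta$ by the Poincar\'e lemma to produce explicit coordinates, whereas you invoke the general correspondence between frames with constant structure constants and local Lie groups; these are the same argument, the paper's computation being a self-contained proof of that correspondence in this nilpotent case. Your decision to rest the argument on the honestly parallel frame rather than on the parabolic Taylor expansions is also sound: vanishing of all homogeneous parts at $q$ alone would not give exact equality of frames.

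For the forward implication (local isometry $\Rightarrow R=0$) you attempt more than the paper, whose proof opens with ``it suffices to construct a local isometry \ldots under the condition $R\equiv 0$'' and never addresses this direction. The point you flag as delicate is indeed an unresolved gap in your sketch: a local sub-Riemannian isometry is only required to intertwine $(\Ho,g_\Ho)$; it induces a canonical isomorphism of the quotients $T\M/\Ho$, but says nothing immediate about the chosen complements $\Vr\subset T\M$, and the Bott connection --- hence $R$ --- depends on that complement. So the claim that $\Vr$ and $g_\Vr$ are ``intrinsically determined'' requires proof (for instance, uniqueness of the $H$-type complement, or that an isometry onto the group must carry $\Vr$ to the center). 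As written, your converse direction is complete and matches the paper, while the forward direction remains a sketch --- though the paper itself offers no proof of that direction at all.
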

\begin{proof}
It suffices to construct a local isometry between $\M$ and its tangent group under the condition $R\equiv 0$.
Assume that the curvature tensor $R$ is identically zero. Then, due to the properties of the Bott connection we can construct near every fixed point $q\in \M$, a parallel orthonormal frame $\{X_1,\cdots,X_n,Z_1,\cdots,Z_m\}$, where $X_1,\cdots,X_n$ (resp. $Z_1,\cdots,Z_m$) are horizontal (resp. vertical) vector fields. Next, we look at the Lie bracket relations of this frame. Then it holds that
\begin{align} \textstyle
	[X_\alpha,Z_i] &= -T(X_\alpha,Z_i) = 0, \\
	[Z_i,Z_j] & = -T(Z_i,Z_j) = 0,\\
	[X_\alpha,X_\beta] &= -T(X_\alpha,X_\beta) = -\textstyle \sum_{k=1}^\rv J_{\alpha\beta}^k Z_k.
\end{align}

Since the torsion is horizontally parallel, it follows that the functions $J_{\alpha\beta}^k$ are in fact constant. Now we consider the structure equations (\ref{GL_expansion_differentials}) with respect to this frame and the associated co-frame $\theta^1,\cdots,\theta^\rh,\eta^1,\cdots,\eta^\rv$:
\begin{equation}\label{struc_equ}
d\theta^\alpha=0 \text{ and } d\eta^i=\frac{1}{2}J^{i}_{\alpha\beta}\theta^\alpha \wedge \theta^\beta.
\end{equation}
Then, for every $\alpha$ we can find a locally defined smooth function $x^\alpha$ such that $\theta^\alpha=dx^\alpha$. Inserting this in \cref{struc_equ} we can find a smooth function $z_i$ such that
\begin{equation}
\eta^i = dz^i + \frac{1}{2}J^i_{\alpha\beta} x^\alpha dx^\beta.
\end{equation}

Finally, the map $p \mapsto (x^1(p),\cdots,x^n(p),z^1(p),\cdots,z^m(p))$ defines a local sub-Riemannian isometry between $\M$ and its tangent $H$-type group.
\end{proof}
\section{Popp measure and volume of small parabolic geodesic balls}
\label{Section:Popp}
Let $\{X_1,\cdots,X_{\rh},Z_1,\cdots,Z_\rv\}$ be a special frame near $q\in \M$ in the sense of  \cref{Section:Priviledged Coordinates}. We observe that this frame is also an \emphs{adapted frame} for the sub-Riemannian structure of the H-type Riemannian foliation $\M$ in the sense of \cite{BR13}; that is, $\{X_1, \cdots, X_\rh\}$ spans the horizontal distribution while $\{Z_1, \cdots, Z_\rv\}$ spans the vertical distribution. It follows from the results therein that the Popp measure $\mathcal{P}$ associated to the corresponding equiregular sub-Riemannian structure can be expressed locally in the form 
\begin{equation}
\mathcal{P}=\frac{1}{\sqrt{\det{B}}}\theta^1\wedge\cdots\wedge\theta^{\rh}\wedge\eta^1\wedge\cdots\wedge\eta^\rv.
\end{equation}
Here $B=(B_{ij})_{ij}$ is the $(\rv\times \rv)$-matrix function locally defined near $q$ with coefficients given by
\begin{equation}
B_{ij}:=\sum_{\alpha,\beta=1}^{\rh}b_{\alpha\beta}^{i}b_{\alpha\beta}^{j},
\end{equation}
where $b_{\alpha\beta}^{i}$ are defined for $\alpha,\beta=1,\cdots,\rh$ and $i=1,\cdots,\rv$ by
\begin{equation}
b_{\alpha\beta}^{i}:=g\big{(}[X_\alpha,X_\beta],Z_i\big{)}= -g(J_iX_\alpha,X_\beta).
\end{equation}

Hence it follows that
\begin{equation}
B_{ij} = \sum_{\alpha,\beta=1}^{\rh} g(J_iX_\alpha, X_\beta) g(J_jX_\alpha, X_\beta) = \sum_{\alpha=1}^\rh g(J_iX_\alpha, J_j X_\alpha) = \rh\delta_{ij}.
\end{equation}
In the last equality we used the skew-symmetry of the almost complex structures $J_i$ together with the $H$-type condition. This shows that $B$ is a diagonal matrix:
\begin{equation}
B= \rh \cdot \Id \in \R^{\rv \times \rv}
\end{equation}
and hence we obtain a formula for the Popp measure:
\begin{lem}\label{measure} 
The Popp measure $\mathcal{P}$ for the H-type Riemannian foliation $(\M,\Ho,g_\Ho)$ has the form 
\begin{equation}
\mathcal{P}=\rh^{-\rv/2}\omega,
\end{equation}
where $\omega := \theta^1 \wedge \cdots \wedge \theta^\rh \wedge \eta^1 \wedge \cdots \wedge \eta^\rv$.
\end{lem}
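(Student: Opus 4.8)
The plan is to combine the general formula for the Popp measure with the computation of the structure matrix $B$ that has just been carried out in the body of the section. First I would invoke the result of \cite{BR13}: since the special frame $\{X_1,\dots,X_\rh,Z_1,\dots,Z_\rv\}$ is an adapted frame for the equiregular step-two structure (its horizontal members span $\Ho$, its vertical members span $\Vr$), the Popp measure takes the stated form
\begin{equation*}
\mathcal{P}=\frac{1}{\sqrt{\det B}}\,\theta^1\wedge\cdots\wedge\theta^\rh\wedge\eta^1\wedge\cdots\wedge\eta^\rv,
\end{equation*}
where $B=(B_{ij})$ is the Gram-type matrix assembled from the structure functions $b_{\alpha\beta}^i=g([X_\alpha,X_\beta],Z_i)=-g(J_iX_\alpha,X_\beta)$. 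This reduces the lemma to evaluating $\det B$.

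Next I would identify $B$ as a scalar multiple of the identity. Writing $B_{ij}=\sum_{\alpha,\beta}b_{\alpha\beta}^i b_{\alpha\beta}^j$, the inner $\beta$-sum collapses because $\{X_\beta\}$ is orthonormal and $J_iX_\alpha\in\Ho$, yielding $B_{ij}=\sum_\alpha g(J_iX_\alpha,J_jX_\alpha)$. Using the skew-adjointness of each $J_i$ (which follows from the antisymmetry of the torsion $T$) this equals $-\tr(J_iJ_j)$ computed on $\Ho$. Then the $H$-type condition in its polarized form \eqref{H-type-condition-polarization} gives $J_iJ_j+J_jJ_i=-2\delta_{ij}\Id_\Ho$, and taking traces (recall $\tr(J_iJ_j)=\tr(J_jJ_i)$) produces $\tr(J_iJ_j)=-\rh\,\delta_{ij}$, hence $B_{ij}=\rh\,\delta_{ij}$, i.e. $B=\rh\cdot\Id\in\R^{\rv\times\rv}$. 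This is precisely the chain of equalities displayed above the lemma statement.

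Finally, the determinant is immediate: $\det(\rh\cdot\Id)=\rh^{\rv}$, so $\sqrt{\det B}=\rh^{\rv/2}$ and therefore $\mathcal{P}=\rh^{-\rv/2}\,\omega$ with $\omega=\theta^1\wedge\cdots\wedge\theta^\rh\wedge\eta^1\wedge\cdots\wedge\eta^\rv$. There is no genuine obstacle here, since the substantive work—the reduction $B=\rh\cdot\Id$—is already complete; the only point that warrants care is the correct sequencing of the two metric inputs (orthonormality of $\{X_\beta\}$ to collapse the inner sum, and skew-adjointness of the $J_i$ together with the polarized $H$-type identity to evaluate the trace), after which taking the determinant and its square root is purely formal.
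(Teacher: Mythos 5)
Your proposal is correct and follows essentially the same route as the paper: invoke the local Popp-measure formula from \cite{BR13} for the adapted special frame, reduce to computing $\det B$, and show $B=\rh\cdot\Id$ via orthonormality, skew-adjointness of the $J_i$, and the (polarized) $H$-type condition. The only difference is cosmetic—you make explicit the trace identity $\tr(J_iJ_j)=-\rh\,\delta_{ij}$, which the paper compresses into the single phrase ``skew-symmetry together with the $H$-type condition.''
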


Let us denote by $\Psi \colon q \in U \subseteq \M \rightarrow \R^{\rh+\rv}$ the (privileged) coordinate map and define the parabolic ball with center $q$ and (small) radius $r>0$ by 
\begin{equation}
B(q,r) := \Psi^{-1} \left(\hat{B}(0,r)\right),
\end{equation}
where 
\begin{equation}
\hat{B}(0,r) := \left\{ (x,z) \in \R^{\rh+\rv} \colon \|(x,z)\|_h := (|x|^4+|z|^2)^{1/4} \leq r \right\}
\end{equation}
is the {\it Kor\'anyi ball} of radius $r>0$ centered at $0 \in \mathbb{R}^{n+m}$. Here $| \cdot |$ denotes the Euclidean inner product on $\R^\rh$ and $\R^\rv$. 

\begin{rem}
Note that a parabolic ball is independent of the choice of an orthonormal frame at $q$ and that according to the ball-box theorem (see \cite[Theorem 2.10]{M02}) the sub-Riemannian distance $d_{cc}$ in the privileged coordinates $(x,z)$ is equivalent to the homogeneous distance $d_h$ defined by 
\begin{equation}
d_h \left( (x,z), (x^\prime, z^\prime) \right) := \|(x,z)^{-1 } \star (x^\prime,z^\prime)\|_h
\end{equation}
for $(x,z),(x^\prime,z^\prime) \in \R^{\rh+\rv} \simeq \mathbb{G}(q)$. Here $\star$ denotes the group law on $\mathbb{G}(q)$, as in \cref{Remark_3_5}.
\end{rem}

The next lemma provides a geometric interpretation of the scalar curvature induced by the Bott connection:
\begin{thm}\label{thmB}
Let $q \in \M$. As $r\to 0$ it holds:
\begin{equation}
r^{-Q} \cdot \mathrm{vol}(B(q,r)) = a_{\rh,\rv} - b_{\rh,\rv}\kh(q) r^2 + O(r^3),
\end{equation}
where $a_{\rh,\rv}$ and $b_{\rh,\rv}$ are suitable positive constants (which will be given in the proof below) 
and $Q=n+2m$ is the Hausdorff dimension of the metric space $(\M, d_{cc})$.
\end{thm}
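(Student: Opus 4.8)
The plan is to reduce the Popp volume to an integral over the Korányi ball, expand the integrand into parabolic homogeneous parts, and integrate term by term using the dilation $\delta_r$. By \cref{measure} we have $\mathrm{vol}(B(q,r)) = \rh^{-\rv/2}\int_{B(q,r)}\omega$, and since $B(q,r) = \Psi^{-1}(\hat{B}(0,r))$ the change of variables along the privileged coordinate map $\Psi$ gives
\[
\mathrm{vol}(B(q,r)) = \rh^{-\rv/2}\int_{\hat{B}(0,r)}\rho(x,z)\,dx\,dz,
\]
where $\omega = \rho\,dx^1\wedge\cdots\wedge dx^\rh\wedge dz^1\wedge\cdots\wedge dz^\rv$ in privileged coordinates. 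It then suffices to determine $\rho$ up to parabolic order two.

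The first step is a structural simplification: only the horizontal block of $\omega$ carries geometric information. By \cref{taylor_exp} every $\theta^\alpha$ is, through order three, a combination of the $dx^\beta$ alone, whereas each $\eta^i$ equals $\tfrac{1}{2}dz^i$ plus a one-form involving only the $dx^\beta$. Since $\theta^1\wedge\cdots\wedge\theta^\rh$ already exhausts all $dx^\beta$, every term of $\eta^1\wedge\cdots\wedge\eta^\rv$ carrying an extra $dx^\beta$ is annihilated in the wedge, so that
\[
\omega = \frac{1}{2^\rv}\,\big(\theta^1\wedge\cdots\wedge\theta^\rh\big)\wedge dz^1\wedge\cdots\wedge dz^\rv = \frac{1}{2^\rv}\,\sigma\,dx^1\wedge\cdots\wedge dx^\rh\wedge dz^1\wedge\cdots\wedge dz^\rv,
\]
where $\sigma$ is defined by $\theta^1\wedge\cdots\wedge\theta^\rh = \sigma\,dx^1\wedge\cdots\wedge dx^\rh$. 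Expanding $\sigma$ into parabolic homogeneous parts, the values $\theta^{\alpha(1)}=dx^\alpha$ and $\theta^{\alpha(2)}=0$ from \cref{taylor_exp} give $\sigma^{(0)}=1$ and $\sigma^{(1)}=0$; the vanishing of $\sigma^{(1)}$ is precisely what forces the absence of a linear term in $r$. The order-two part arises only by replacing a single $dx^\alpha$ with $\theta^{\alpha(3)} = \tfrac{1}{6}R_{\gamma\delta\beta}^{\alpha}(q)x^\beta x^\gamma\,dx^\delta$, of which only the $\delta=\alpha$ summand survives, yielding $\sigma^{(2)} = \tfrac{1}{6}R_{\gamma\alpha\beta}^{\alpha}(q)\,x^\beta x^\gamma$.

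Finally I would integrate. As $\delta_r^*(dx\,dz) = r^{Q}\,dx\,dz$ with $Q = \rh+2\rv$, and $\sigma^{(k)}$ is homogeneous of order $k$, one has $\int_{\hat{B}(0,r)}\sigma^{(k)}\,dx\,dz = r^{Q+k}\int_{\hat{B}(0,1)}\sigma^{(k)}\,dx\,dz$. The Korányi ball is invariant under $x\mapsto-x$ and under orthogonal rotations of the horizontal variable, hence $\int_{\hat{B}(0,1)}x^\beta x^\gamma\,dx\,dz = \tfrac{\delta^{\beta\gamma}}{\rh}\int_{\hat{B}(0,1)}|x|^2\,dx\,dz$. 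Contracting with $R_{\gamma\alpha\beta}^\alpha(q)$ and using the antisymmetry $R_{\beta\alpha\beta}^{\alpha} = -R_{\alpha\beta\beta}^{\alpha} = -\kh$ of the Bott curvature gives $\int_{\hat{B}(0,1)}\sigma^{(2)}\,dx\,dz = -\tfrac{\kh(q)}{6\rh}\int_{\hat{B}(0,1)}|x|^2\,dx\,dz$. Collecting the contributions, I obtain
\[
r^{-Q}\,\mathrm{vol}(B(q,r)) = \frac{\rh^{-\rv/2}}{2^\rv}\,\mathrm{vol}\big(\hat{B}(0,1)\big) - \frac{\rh^{-\rv/2}}{6\cdot\rh\cdot 2^\rv}\Big(\int_{\hat{B}(0,1)}|x|^2\,dx\,dz\Big)\kh(q)\,r^2 + O(r^3),
\]
which identifies the positive constants $a_{\rh,\rv}$ and $b_{\rh,\rv}$.

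The main obstacle is the bookkeeping behind the structural simplification, namely verifying that all corrections from the vertical co-frame, together with the order-one correction of the horizontal block, drop out of the top-degree form, so that curvature enters only through $\theta^{\alpha(3)}$. Once this is in place the remainder is routine: smoothness of $\omega$ yields a graded Taylor estimate $|\sigma - \sigma^{(0)} - \sigma^{(2)}| \le C\|(x,z)\|_h^{3}$ on $\hat{B}(0,1)$, whose integral over $\hat{B}(0,r)$ is $O(r^{Q+3})$ by the same scaling, producing the stated $O(r^3)$ error after dividing by $r^Q$.
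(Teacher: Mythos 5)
Your proposal is correct and follows essentially the same route as the paper: reduce via \cref{measure} and the privileged coordinates to an expansion of the volume form using \cref{taylor_exp}, observe that the order-$(Q+1)$ term vanishes because $\theta^{\alpha(2)}=0$ and all vertical corrections are $dx$-combinations killed in the wedge, identify the order-$(Q+2)$ term $\frac{1}{6\cdot 2^{\rv}}R_{\gamma\alpha\beta}^{\alpha}(q)x^{\beta}x^{\gamma}\,dx\wedge dz$, and integrate over the Kor\'anyi ball using its symmetry together with the skew-symmetry of $R$ to produce $-\kh(q)$. The only cosmetic differences are that you scale the density $\sigma$ by a change of variables rather than pulling back the form by $\delta_r$, and you use rotation invariance in $x$ where the paper uses reflection invariance; both yield the same constants.
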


\begin{proof}
We write 
\begin{equation}
\mathrm{vol}(B(q,r)) = \frac{1}{\rh^{\rv/2}} \int_{B(q,r)} \omega
= \frac{1}{\rh^{\rv/2}} \int_{\hat{B}(0,1)} \hor,
\end{equation}
where we denote the pushforward to $\R^{\rh + \rv}$ of $\omega$ as $\ho = \Psi_*\omega $ and its pullback by $\delta_r$ as $\hor = \delta_r^* \ho$.

By \cref{taylor_exp}, the low order homogeneous parts in the expansion of $\hor$ as $r \rightarrow 0^+$ start at order $Q = \rh + 2\rv$ and can be explicitly computed as
\begin{equation}
\hor = r^Q \ho^{(Q)} + r^{Q+1} \ho^{(Q+1)} + r^{Q+2} \ho^{(Q+2)} + O(r^{Q+3}), \text{ as } r \rightarrow 0^+.
\end{equation}

We calculate the homogeneous terms $\ho^{(j)}$ for $j=Q, Q+1, Q+2$. 

For $j=Q$, 
\begin{align}
\ho^{(Q)} &= \theta^{1(1)} \wedge \cdots \wedge \theta^{\rh(1)} \wedge \eta^{1(2)} \wedge \cdots \wedge \eta^{\rv(2)} \\
&= \frac{1}{2^\rv} dx^1 \wedge \cdots \wedge dx^\rh \wedge dz^1 \wedge \cdots \wedge dz^\rv.
\end{align}

It holds that $\ho^{(Q+1)} = 0$, since $\theta^{i(2)} =0$ and $\eta^{i(3)}$ is a linear combination of $dx^j$.  In fact, a similar argument will hold in general for $j$ odd. 

Finally, for $j= Q+2$ we have: 
\begin{equation}
\ho^{(Q+2)} = \frac{1}{6\cdot 2^\rv} R_{\gamma\alpha\beta}^\alpha (q) x^\beta x^\gamma dx^1 \wedge \cdots \wedge dx^\rh \wedge dz^1 \wedge \cdots \wedge dz^\rv.
\end{equation}
It follows that $a_{\rh,\rv}$ is given by 
\begin{equation}
a_{\rh,\rv} := \frac{1}{(4m)^{\rv/2}} \int_{\hat{B}(0,1)} dx^1 \cdots dx^\rh dz^1 \cdots dz^\rv
\end{equation}
and the coefficient of the $r^2$ term has the value
\begin{equation}
\frac{1}{6(4\rh)^{\rv/2}} \int_{\hat{B}(0,1)} R_{\gamma\alpha\beta}^\alpha (q) x^\beta x^\gamma dx^1 \cdots dx^\rh dz^1 \cdots dz^\rv.
\end{equation}

Since the ball ${{\hat{B}(0,1)}}$ is invariant under the reflection $x^\gamma \rightarrow -x^\gamma$ we conclude that: 
\begin{equation}
\int_{\hat{B}(0,1)}x^\beta x^\gamma \ dx^1 \cdots dx^\rh dz^1 \cdots dz^\rv
\end{equation}
is non-zero if and only if $\beta = \gamma$ and moreover the integral does not depend on this value.  Since $R_{\beta \alpha \beta}^\alpha = - R_{\beta \alpha \alpha}^\beta$ it follows that the coefficient in front of $r^2$ will be of the form $-b_{\rh,\rv}\kh(q)$ with
\begin{equation}
b_{\rh,\rv} := \frac{1}{6(4m)^{\rv/2}} \int_{\hat{B}(0,1)} (x^\alpha)^2 \ dx^1 \cdots dx^\rh dz^1 \cdots dz^\rv,
\end{equation}
where the index $\alpha$ can be chosen to be any of $1,2,\ldots,\rh$. 
\end{proof}

Being an equiregular sub-Riemannian manifold, $(\M,\Ho, g_\Ho)$ carries an intrinsic sub-Laplacian $\DsR$ induced by the Popp measure $\mathcal{P}$, (see  \cite{ABGR09,BR13}). More precisely, in terms of the special frame $\{X_1,\cdots,X_{\rh},Z_1,\cdots,Z_\rv\}$, this second order, positive and hypoelliptic differential operator $\DsR$ can be expressed explicitly in the following form (see \cite{BR13}):
\begin{equation}\label{sublaplacian}
\DsR=-\left(\sum_{\alpha=1}^{\rh}X_\alpha^2+\text{div}_{\mathcal{P}}(X_\alpha)X_\alpha\right).
\end{equation}
Here the divergence operator is defined by
\begin{equation}
\mathrm{div}_{\mathcal{P}}(X_\alpha) \mathcal{P}= \Lie_{X_\alpha}(\mathcal{P}).
\end{equation}

We now calculate the divergence in terms of the geometric data. Applying the Leibniz law for the Lie derivative,
\begin{equation}
\Lie_{X_\alpha}(\mathcal{P}) = \frac{1}{\rh^{\rv/2}} \sum_{a=1}^{\rh+\rv} \nu^1 \wedge \cdots \wedge \nu^{a-1} \wedge \Lie_{X_\alpha}(\nu^a) \wedge \nu^{a+1} \wedge \cdots \wedge \nu^{\rh+\rv}.
\end{equation}

Since $\{\nu^1,\cdots,\nu^{\rh+\rv}\}$ is a coframe, we write for $\alpha \in \{1, \cdots, \rh\}$ and $a \in \{1, \cdots, \rh+\rv\}$:
\begin{equation}
\Lie_{X_\alpha}(\nu^a) = f_{\alpha b}^{a}\nu^b.
\end{equation}
with the functions $f_{\alpha b}^a$ given by $f_{\alpha b}^a=\Lie_{X_\alpha}(\nu^a)(X_b)=\nu^a([X_b,X_\alpha]).$ Hence we obtain
\begin{equation}
\mathrm{div}_{\mathcal{P}}(X_\alpha) = \theta^\beta([X_\beta,X_\alpha]) + \eta^i([Z_i,X_\alpha])
\end{equation}

Now, using the properties of the Bott connection, a calculation shows that $\omega_\alpha := \theta^\beta([X_\beta,X_\alpha]) = \omega_{\alpha}^{\beta}(X_\beta)$ and $\eta^i([Z_i,X_\alpha])=0$; summarizing the above calculation gives us that
\begin{lem}\label{sublap}
The intrinsic sub-Laplacian $\DsR$ has the expression
\begin{equation}
\DsR=-\left(\sum_{\alpha=1}^{\rh}X_\alpha^2 + \omega_\alpha X_\alpha\right).
\end{equation}
\end{lem}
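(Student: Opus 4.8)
The plan is to reduce the proof to the evaluation of a single divergence. By the definition \eqref{sublaplacian} of $\DsR$ it suffices to show that $\mathrm{div}_{\mathcal{P}}(X_\alpha) = \omega_\alpha$; substituting this into \eqref{sublaplacian} yields the asserted formula at once. The computation preceding the statement already reduces the divergence to
$$
\mathrm{div}_{\mathcal{P}}(X_\alpha) = \theta^\beta([X_\beta,X_\alpha]) + \eta^i([Z_i,X_\alpha]),
$$
so the only work left is to establish the two identities $\theta^\beta([X_\beta,X_\alpha]) = \omega_\alpha^\beta(X_\beta)$ and $\eta^i([Z_i,X_\alpha]) = 0$. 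In both cases the strategy is the same: rewrite every Lie bracket through the Bott connection via $[Y_a,Y_b] = \nabla_{Y_a}Y_b - \nabla_{Y_b}Y_a - T(Y_a,Y_b)$, and then discard terms using (i) the torsion restrictions $T(\Ho,\Ho)\subseteq\Vr$ and $T(\Ho,\Vr)=0$, (ii) the compatibility $\nabla_X\Ho\subseteq\Ho$ and $\nabla_X\Vr\subseteq\Vr$, and (iii) the skew-symmetry of the connection matrix $(\omega_a^b)$.

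For the horizontal identity I would expand $[X_\beta,X_\alpha]=\nabla_{X_\beta}X_\alpha-\nabla_{X_\alpha}X_\beta-T(X_\beta,X_\alpha)$. The torsion term lies in $\Vr$, hence is annihilated by $\theta^\beta$, while compatibility gives $\nabla_{X_\beta}X_\alpha=\omega_\alpha^\gamma(X_\beta)X_\gamma$, so that $\theta^\beta(\nabla_{X_\beta}X_\alpha)=\omega_\alpha^\beta(X_\beta)$ and $\theta^\beta(\nabla_{X_\alpha}X_\beta)=\omega_\beta^\beta(X_\alpha)$. The second of these is a diagonal trace that I expect to vanish: since the special frame is $g$-orthonormal on $U$ and $\nabla$ is metric, differentiating $g(Y_a,Y_b)=\delta_{ab}$ yields $\omega_a^b(X)+\omega_b^a(X)=0$, so $(\omega_a^b)$ is skew-symmetric and each diagonal entry $\omega_\beta^\beta$ is zero. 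This leaves $\theta^\beta([X_\beta,X_\alpha])=\omega_\alpha^\beta(X_\beta)=\omega_\alpha$.

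The vertical identity runs in parallel. Here $T(Z_i,X_\alpha)=0$ by $T(\Ho,\Vr)=0$, so $[Z_i,X_\alpha]=\nabla_{Z_i}X_\alpha-\nabla_{X_\alpha}Z_i$. Compatibility forces $\nabla_{Z_i}X_\alpha\in\Ho$, which $\eta^i$ annihilates, and $\nabla_{X_\alpha}Z_i=\omega_i^j(X_\alpha)Z_j\in\Vr$, so that $\eta^i(\nabla_{X_\alpha}Z_i)=\omega_i^i(X_\alpha)$ is again a diagonal trace of the (skew-symmetric) vertical block and therefore vanishes. Hence $\eta^i([Z_i,X_\alpha])=0$. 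Combining the two identities gives $\mathrm{div}_{\mathcal{P}}(X_\alpha)=\omega_\alpha$ and concludes the proof.

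The only genuinely non-formal point, and thus where I would be most careful, is the vanishing of the two diagonal traces $\omega_\beta^\beta(X_\alpha)$ and $\omega_i^i(X_\alpha)$. These need not vanish for an arbitrary adapted frame; their vanishing rests precisely on the special frame being $g$-orthonormal throughout $U$, which in turn follows from the metricity $\nabla g=0$ together with the construction of the frame by parallel transport along parabolic geodesics (parallel transport of a metric connection being an isometry). Everything else amounts to bookkeeping of which distribution each bracket component belongs to.
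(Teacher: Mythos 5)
Your proposal is correct and follows essentially the same route as the paper: the paper's proof consists precisely of the divergence reduction $\mathrm{div}_{\mathcal{P}}(X_\alpha)=\theta^\beta([X_\beta,X_\alpha])+\eta^i([Z_i,X_\alpha])$ followed by the assertion that "a calculation shows" the two identities, and your expansion of the brackets via $[Y_a,Y_b]=\nabla_{Y_a}Y_b-\nabla_{Y_b}Y_a-T(Y_a,Y_b)$ together with the torsion and compatibility properties is exactly that calculation. Your care about the diagonal traces $\omega_\beta^\beta$ and $\omega_i^i$ is well placed, and the skew-symmetry $\omega_a^b=-\omega_b^a$ you invoke (metricity plus orthonormality of the special frame, preserved under parallel transport) is the same fact the paper records in its Appendix proof of the structure equations.
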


By definition the sub-Laplacian $\DsR$ is positive and it coincides up to a constant factor with the (negative) \emphs{horizontal Laplacian} defined in \cite[Remark 2.19]{BGRV21}.
\section{Heat invariants}
\label{Section:Heat invariants}
Using \cref{taylor_exp} and \cref{sublap}, we compute the low order homogeneous terms in the decomposition of the sub-Laplacian into a sum of 
homogeneous differential operators with polynomial coefficients:
$$\DsR=-\left(\hDsR+\mathcal{A}^{(-1)}+\mathcal{A}^{(0)}+\cdots\right) \hspace{2ex} \text{with} \hspace{2ex} 
\hDsR:=\sum_\alpha \hat{X}_\alpha^2.$$
 Note that  $\hDsR$ is the intrinsic sub-Laplacian on the $H$-type group $\mathbb{G}(q)$ (cf. Remark \ref{Remark_3_5}). The operators $\mathcal{A}^{(-1)}$ and $\mathcal{A}^{(0)}$ are given by: 
\begin{align}
&\mathcal{A}^{(-1)}:=\sum_{\alpha} \Big(\hat{X}_\alpha X_\alpha^{(0)}+X_\alpha^{(0)}\hat{X}_\alpha\Big),
\\
&\mathcal{A}^{(0)}:=\sum_{\alpha} \Big(\hat{X}_\alpha X_\alpha^{(1)}+X_\alpha^{(1)}\hat{X}_\alpha+\left(X_\alpha^{(0)}\right)^2+\frac{1}{2}R_{\gamma\beta\alpha}^{\beta}(q)x^\gamma \hat{X}_\alpha\Big).
\end{align}
According to the results in \cite[p. 28,46]{VHT21}  and based on Duhamel's formula the second heat invariant $c_1(q)$ at $q$ is given by the Schwartz kernel 
$K_1(1,0,0)$ (at time $t=1$) of the operator
$$C_1(t):=\int_{0}^{t}e^{(t-s)\hDsR}\left(\mathcal{A}^{(0)}(q)e^{s\hDsR}+\mathcal{A}^{(-1)}(q)C_0(s)\right)ds,$$
where 
$$C_0(t)=\int_{0}^{t}e^{(t-s)\hDsR}\mathcal{A}^{(-1)}(q)e^{s\hDsR}ds.$$
From now on and in order to simplify the formulas we assume that the torsion is horizontally parallel, i.e. $\nabla_{\Ho}T=0$. 
According to \cref{properties}, (3) this implies that all horizontal derivatives of $J_{\alpha\beta}^{i}$ at the point $q$ vanish. As a consequence, at the point $q$, we have 
$J_{\alpha\beta}^{i(1)}=0$ by \cref{properties}, (1) and hence, $X_\alpha^{(0)}
=0$ for all $\alpha$ by \cref{expansion}. With this in mind, it follows that $\mathcal{A}^{(-1)}=0$ under the condition $\nabla_{\Ho}T=0$, and 
the second heat invariant $c_1(q)$ is obtained from the Schwartz kernel $K_1(1,0,0)$ of 
$$C_1(t)=\int_{0}^{t}e^{(t-s)\hDsR}\mathcal{A}^{(0)}(q)e^{s\hDsR}ds$$
at $(0,0)$ and for time $t=1$. More precisely, we have
\begin{equation}\label{second_coef}
c_1(q) = \int_{0}^{1}\int_{\R^{\rh+\rv}} \hat{K}_2(s,0,\xi)(\mathcal{A}^{(0)}(q) \hat{K}_1)(1-s,\xi,0) d\xi ds,
\end{equation}
where $\hat{K}_1$ denotes the heat kernel associated with $\hDsR$  on $\mathbb{G}(q) \cong \R^{\rh+\rv}$ equipped with the nilpotentization of the Popp measure at the point $q$, and the operator $\mathcal{A}^{(0)}$ acts on the second component of $\hat{K}_1$.
\begin{lem}\label{data} 
 Assume that $\nabla_{\Ho}T=0$. Then the second heat invariant $c_1(q)$ is a linear combination of the following components of tensors at $q$: 
\begin{align}
& R_{\alpha\beta\gamma}^{\delta},\hspace{2mm}R_{\alpha\beta i}^{j},\hspace{2mm}J_{\alpha\lambda}^{i} R_{\alpha\beta\gamma}^{\delta},\hspace{2mm}J_{\delta\lambda}^{i} R_{\alpha\beta\gamma}^{\delta},\hspace{2mm}J_{\alpha\gamma}^{i} R_{\alpha\beta i}^{j},\\
& J_{\alpha\alpha^\prime}^{i}J_{\beta\beta^\prime}^{j}R_{\alpha\gamma\delta}^{\beta},\hspace{2mm}X_\alpha X_\beta(J_{\gamma\delta}^{i}),\hspace{2mm} J_{\alpha\alpha^\prime}^{i}X_\gamma X_\delta(J_{\alpha\beta}^{j}),\\
&Z_i(J_{\alpha\beta}^{j}),\hspace{2mm}J_{\alpha\beta}^{i}Z_j(J_{\alpha\gamma}^{k}).
\end{align}
\end{lem}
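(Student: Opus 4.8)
The plan is to start from the explicit convolution formula \eqref{second_coef} for $c_1(q)$ and to argue that, under the standing assumption $\nabla_\Ho T = 0$, the operator $\mathcal{A}^{(0)}(q)$ is a differential operator on $\R^{\rh+\rv}$ whose polynomial coefficients are assembled only from the ten tensor structures listed in the statement. Since $c_1(q)$ depends linearly on $\mathcal{A}^{(0)}(q)$ through the integral, the linear-combination claim will follow once the resulting coefficient integrals against the nilpotent heat kernel $\hat K_1$ are seen to be finite universal constants. First I would record the reduction of $\mathcal{A}^{(0)}$ already carried out in the text: horizontal parallelism of the torsion forces $J^{i(1)}_{\alpha\beta}(q)=0$ by \cref{properties}(1),(3), hence $X_\alpha^{(0)}=0$ by \cref{expansion}, so the summand $(X_\alpha^{(0)})^2$ and the whole operator $\mathcal{A}^{(-1)}$ drop out and
\[
\mathcal{A}^{(0)} = \sum_\alpha \Bigl( \hat X_\alpha X_\alpha^{(1)} + X_\alpha^{(1)} \hat X_\alpha + \tfrac{1}{2} R^\beta_{\gamma\beta\alpha}(q)\, x^\gamma\, \hat X_\alpha \Bigr).
\]

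Next I would substitute the explicit expression $X_\alpha^{(1)} = f^\beta_\alpha \hat X_\beta + h^i_\alpha \hat Z_i$ from \cref{expansion}, together with the formula for $J^{i(2)}_{\alpha\beta}$ from \cref{properties}(1). This exhibits $f^\beta_\alpha$ as a degree-two polynomial in $x$ with coefficient $R^\beta_{\alpha\gamma\delta}(q)$, and $h^i_\alpha$ as a polynomial in $(x,z)$ whose coefficients are precisely $R^i_{\alpha\beta j}(q)$, the product $J^i_{\beta\gamma}(q) R^\gamma_{\alpha\beta'\gamma'}(q)$, the second horizontal derivative $X_\gamma X_\delta(J^i_{\alpha\beta})(q)$, and the vertical derivative $Z_j(J^i_{\alpha\beta})(q)$. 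The bulk of the argument is then a bookkeeping step: expanding the compositions $\hat X_\alpha X_\alpha^{(1)}$ and $X_\alpha^{(1)} \hat X_\alpha$ by the Leibniz rule and collecting $\mathcal{A}^{(0)}$ as a finite sum $\sum_\mu G_\mu(q)\, P_\mu(x,z,\partial_x,\partial_z)$, where each $G_\mu(q)$ is a fixed scalar built from the geometric data and each $P_\mu$ is a monomial differential operator.

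The only mechanism producing a tensor structure beyond the four ``base'' quantities above is the vertical part $J^i_{\alpha\beta}(q) x^\beta \partial_{z^i}$ of $\hat X_\alpha$. When this part merely multiplies the polynomial coefficient of a second-order monomial it pairs one factor $J(q)$ with $R$ (yielding items three and four $J^i_{\alpha\lambda} R^\delta_{\alpha\beta\gamma}$ and $J^i_{\delta\lambda} R^\delta_{\alpha\beta\gamma}$, according to which index of $R$ the $J$-index contracts), with $JR$ (yielding item six $J^i_{\alpha\alpha'} J^j_{\beta\beta'} R^\beta_{\alpha\gamma\delta}$), or with $XX(J)$ (yielding item eight $J^i_{\alpha\alpha'} X_\gamma X_\delta(J^j_{\alpha\beta})$); when instead it \emph{differentiates} the $z$-dependent terms of $h^i_\alpha$ it pairs $J(q)$ with $R^i_{\alpha\beta j}(q)$ (item five) or with $Z_j(J^i_{\alpha\beta})(q)$ (item ten). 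The pieces carrying no extra $J(q)$ factor, together with the explicit curvature term, reproduce items one, two, seven and nine. The key point to verify here is \emph{closure}: no composition escapes the ten listed structures, and in particular any contraction of two $J(q)$ factors against an $R$ reduces via the $H$-type relation \eqref{H-type-condition-polarization} to the single admissible type appearing as item six. I expect this exhaustive expansion and the verification of closure to be the main obstacle, since one must be confident that the Leibniz expansion of the second-order operator $\mathcal{A}^{(0)}$ never generates an unlisted tensor.

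Finally I would insert this decomposition into \eqref{second_coef}. By linearity,
\[
c_1(q) = \sum_\mu G_\mu(q) \int_0^1 \!\! \int_{\R^{\rh+\rv}} \hat K_2(s,0,\xi)\,(P_\mu \hat K_1)(1-s,\xi,0)\, d\xi\, ds,
\]
so it remains only to see that each coefficient integral is a finite number depending on $\rh$ and $\rv$ alone. This is comparatively routine: each $P_\mu$ has polynomial coefficients, while the nilpotent heat kernel $\hat K_1$ on $\mathbb{G}(q)\cong\R^{\rh+\rv}$ has an explicit, rapidly decaying form (cf. \cite{BGG96}), so the integrals are finite weighted moments of $\hat K_1$, and they are universal because $\hat K_1$ depends only on $\rh,\rv$. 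Collecting the constants $G_\mu(q)$ then exhibits $c_1(q)$ as the asserted linear combination.
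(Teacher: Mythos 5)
Your proposal matches the paper's own proof essentially step for step: the paper likewise uses $X_\alpha^{(0)}=0$ (from horizontally parallel torsion) to rewrite $\mathcal{A}^{(0)}=(f_\alpha^\beta+f_\beta^\alpha)\hat X_\alpha\hat X_\beta+\hat X_\alpha(f_\beta^\alpha)\hat X_\beta+\hat X_\alpha(h_\alpha^i)\hat Z_i+2h_\alpha^i\hat X_\alpha\hat Z_i+\tfrac{1}{2}R_{\gamma\beta\alpha}^{\beta}(q)x^\gamma\hat X_\alpha$ and then concludes directly from \eqref{second_coef}. Your more detailed bookkeeping of which listed tensor component arises from which term, and the remarks on closure and on finiteness/universality of the coefficient integrals, are simply explicit versions of what the paper compresses into ``the statement directly follows.''
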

\begin{proof}
According to the data and notation in \cref{expansion}  and using $X_\alpha^{(0)}=0$ for all $\alpha$ we can express the operator $\mathcal{A}^{(0)}$ in the form 
$$\mathcal{A}^{(0)}=(f_{\alpha}^{\beta}+f_{\beta}^{\alpha})\hat{X}_\alpha\hat{X}_\beta
+\hat{X}_\alpha(f_\beta^\alpha)\hat{X}_\beta
+\Big(\hat{X}_\alpha(h^i_\alpha)\hat{Z}_i
+2h^{i}_{\alpha}\hat{X}_\alpha\hat{Z}_i
+\frac{1}{2}R_{\gamma\beta\alpha}^{\beta}(q)x^\gamma\hat{X}_\alpha\Big).$$
Then the statement directly follows from \cref{second_coef}. 
\end{proof}

By \cref{data}, we know that the second heat invariant $c_1(q)$ can be expressed in terms of components of certain tensors with respect to the orthonormal basis $\{X_\alpha,Z_i\}$ at $q$. In the following lemmas we will simplify these expressions. 

\begin{rem}
The key observation is that $c_1(q)$ is independent of the choice of such an orthonormal basis; it follows that the linear combinations occurring in \cref{data} must be invariant under the action of the group ${\bf O}(\rh) \times {\bf O}(\rv)$ and therefore we can use techniques from the classical invariance theory of the orthogonal group (cf. \cite{ABP73,W97}) to obtain a more precise expression of the second heat invariant in terms of components of the curvature and torsion tensors.
\end{rem}

\begin{lem}\label{Lem:traces}
The second heat invariant $c_1(q)$ is a linear combination of the following traces:

\begin{center}
	\begin{tabular}{ccc}
		$R_{\alpha\beta\beta}^{\alpha}$
		& $J_{\alpha\gamma}^{i}J_{\beta\delta}^{i} R_{\alpha\gamma\delta}^{\beta}$
 		& $J_{\alpha\gamma}^{i}J_{\beta\delta}^{i}R_{\alpha\delta\gamma}^{\beta}$ \\
 		$J_{\alpha\beta}^{i}X_\gamma X_\gamma(J_{\alpha\beta}^{i})$
 		& $J_{\alpha\beta}^{i}X_\beta X_\gamma(J_{\alpha\gamma}^{i})$
 		& $J_{\alpha\beta}^{i}X_\gamma X_\beta(J_{\alpha\gamma}^{i})$
	\end{tabular}
\end{center}

where we emphasize that we use the summation rule described in \cref{not:summations}.
\end{lem}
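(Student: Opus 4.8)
The plan is to combine the list of admissible tensor components from \cref{data} with classical orthogonal invariance theory. The starting point is the key observation that $c_1(q)$ is a scalar intrinsically attached to the geometry at $q$ and therefore must be invariant under the structural group ${\bf O}(\rh)\times{\bf O}(\rv)$ acting on the orthonormal frame $\{X_\alpha,Z_i\}$. Each entry in the list of \cref{data} is a polynomial expression in the components of the curvature tensor $R$, the Clifford structure $J$, and its covariant derivatives, all contracted over the relevant index ranges. The strategy is to classify, for each such monomial type, the ${\bf O}(\rh)\times{\bf O}(\rv)$-invariant full contractions that can be formed from it, and to argue that every other contraction not appearing in the target list either vanishes or reduces to one of the six listed traces.

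First I would invoke the first fundamental theorem of invariant theory for the orthogonal group (cf. \cite{ABP73,W97}): any scalar formed from the components of these tensors which is invariant under the frame rotations must be a linear combination of complete contractions in which every index is paired and summed with exactly one other index. This immediately eliminates the ``open-index'' entries in \cref{data} such as $R_{\alpha\beta\gamma}^{\delta}$ or $X_\alpha X_\beta(J_{\gamma\delta}^{i})$ taken alone, since these are not yet fully contracted scalars; they may only contribute through complete contractions. The remaining step is then to enumerate, up to the symmetries of the respective tensors, all independent complete contractions available from each monomial. For the purely curvature terms the only scalar is the horizontal scalar curvature $R_{\alpha\beta\beta}^{\alpha}=\kh$. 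For the mixed $J^2 R$ terms the index structure of $J_{\alpha\gamma}^iJ_{\beta\delta}^i$ contracted against the curvature yields (after using the antisymmetry of $R$ in its first two arguments and of $J$) exactly the two independent contractions $J_{\alpha\gamma}^{i}J_{\beta\delta}^{i} R_{\alpha\gamma\delta}^{\beta}$ and $J_{\alpha\gamma}^{i}J_{\beta\delta}^{i}R_{\alpha\delta\gamma}^{\beta}$. For the second-horizontal-derivative terms $J\cdot X X (J)$ the three listed contractions $J_{\alpha\beta}^{i}X_\gamma X_\gamma(J_{\alpha\beta}^{i})$, $J_{\alpha\beta}^{i}X_\beta X_\gamma(J_{\alpha\gamma}^{i})$, and $J_{\alpha\beta}^{i}X_\gamma X_\beta(J_{\alpha\gamma}^{i})$ exhaust the distinct index pairings.

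The remaining entries of \cref{data} must be shown to either reduce to the six traces or vanish. Here I would use the structural identities available for $H$-type foliations: the polarized $H$-type condition \eqref{H-type-condition-polarization}, the first Bianchi identity \eqref{First_Bianchi_identity} (which under $\nabla_\Ho T=0$ forces the cyclic sum of $R$ to lie in $\Gamma(\Vr)$ and vanishes on horizontal output), and the relation $X_\gamma(J_{\alpha\beta}^{i})(q)=g_q(Z_i,(\nabla_{X_\gamma}T)(X_\alpha,X_\beta))$ from \cref{properties}, (3), which under the horizontally-parallel-torsion assumption shows that the first horizontal derivatives vanish at $q$ but constrains the second derivatives. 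For the terms involving vertical derivatives $Z_i(J_{\alpha\beta}^{j})$, one expects the antisymmetry of $J$ in its horizontal indices together with the orthogonality of the vertical frame to collapse these into either the listed second-derivative traces (via commuting covariant derivatives and the curvature) or to zero. I expect the main obstacle to be precisely this bookkeeping: reducing the two-horizontal-derivative and vertical-derivative monomials to the canonical list requires carefully tracking the Bianchi symmetries and the $H$-type algebraic relations to verify that no further independent invariant appears, and that the terms with a single curvature factor and a single $J$ factor (which are odd in $J$ and cannot be completed to a rotation-invariant scalar of the correct parity) drop out. Once these reductions are carried out, the admissible basis of invariants is reduced to exactly the six traces displayed, completing the proof.
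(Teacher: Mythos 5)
Your proposal follows essentially the same route as the paper's proof: both exploit the frame-independence of $c_1(q)$ together with the equivariance of the tensor components from \cref{data} to reduce the problem to classical ${\bf O}(\rh)\times{\bf O}(\rv)$-invariance theory, conclude that only complete contractions (nonzero only when the numbers of free horizontal and vertical indices are both even) can contribute, and then cut the resulting list down via the skew-symmetries of $R$ and $J$. The only cosmetic difference is that you additionally invoke the first Bianchi identity and further H-type structural identities for the final bookkeeping, whereas the paper gets by with the parity constraint and the skew-symmetry of $R$ in its first two lower indices; this does not change the substance of the argument.
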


\begin{proof}
First we will write the coefficients of the tensors in their covariant form by contracting with the metric tensor. For instance
$
g_{\sigma\delta}R_{\alpha\beta\gamma}^{\delta}=R_{\alpha\beta\gamma\sigma}.
$
The tensor with the coefficients $R_{\alpha\beta\gamma\sigma}$ will be considered as an element of the vector space 
\begin{equation}
((\R^\rh)^*)^{\otimes 4} = (\R^\rh)^* \otimes (\R^\rh)^* \otimes (\R^\rh)^* \otimes (\R^\rh)^*.
\end{equation} 
More precisely, we denote $V = (\R^\rh)^*$ and write $\{e^1,\ldots,e^\rh\}$ for the standard dual basis in $V$. Then
\begin{equation}
\big{\{} R_{\alpha\beta\gamma\delta} \colon \alpha,\beta,\gamma,\delta \in \{1, \ldots, n\}\big{\}} \mapsto R_{\alpha\beta\gamma\delta} e^\alpha \otimes e^\beta \otimes e^\gamma \otimes e^\delta \in V^{\otimes 4}.
\end{equation}
The vector space $V$ can be considered as an ${\bf O}(\rh)$ module under the standard action of the orthogonal group. This action is extended to $V^{\otimes 4}$ componentwise. 

Having this example in mind, we proceed like in the CR case~\cite{BGS84}. We consider the space  $V = (\R^\rh)^*$ (resp. $W=(\R^\rv)^*$) with the standard dual basis $\{e^1,\cdots,e^\rh\}$ (resp. $\{f^1,\cdots,f^\rv\}$) as an ${\bf O}(\rh)$ \big(resp. ${\bf O}(\rv)$\big) module. These actions extend naturally to the tensor products
\begin{equation}
V^{\otimes r} \otimes W^{\otimes s} := \underbrace{V \otimes \cdots \otimes V}_{r \text{ times}} \otimes \underbrace{W \otimes \cdots \otimes W}_{s \text{ times}}.
\end{equation}
We write all the tensors occurring in \cref{data} in the  covariant form and consider them as elements of the corresponding vector space $E := V^{\otimes r} \otimes W^{\otimes s}$ for appropriate integers $r$ and $s$.

Note that this tensor representation is equivariant under the action of the group ${\bf O}(\rh)\times{\bf O}(\rv)$; that is, the ${\bf O}(\rh) \times {\bf O}(\rv)$ group action on the chosen orthonormal frame $\{X_1(q), \ldots, X_\rh(q), Z_1(q), \ldots, Z_\rv(q)\}$ at $q \in \M$ commutes with the representation map.  Let us denote by $\Theta \in E$ one of the tensors from \cref{data}, written in the covariant form in the fixed orthonormal frame at $q \in \M$. By the invariance theory \cite{ABP73,W97} there is a linear functional $f \colon E \rightarrow \R$, such that 
\begin{equation}
f(U\Theta)= c_1(q) \hspace{2ex} \text{\emphs{ for all }}\hspace{2ex} U\in {\bf{O}}(\rh)\times {\bf{O}}(\rv).
\end{equation}
By replacing the functional $f$ by its average over ${\bf{O}}(\rh)\times {\bf{O}}(\rv)$, we assume that $f$ is ${\bf{O}}(\rh)\times {\bf{O}}(\rv)$-invariant. It is known that an ${\bf{O}}(\rh)\times {\bf{O}}(\rv)$-invariant linear functional on $E = V^{\otimes r}\otimes W^{\otimes s}$ is non-zero only in the case where $r$ and $s$ are both even and moreover such a functional must be a complete contraction (up to a constant multiple). Hence, the second heat invariant $c_1(q)=f(\Theta)$ must be a linear combination of
\begin{center}
	\begin{tabular}{cccc}
		$R_{\alpha\beta\beta}^\alpha$
		& $R_{\alpha\alpha\beta}^\beta$
		& $R_{\alpha\beta\alpha}^\beta$
		& $J_{\alpha\gamma}^i J_{\beta\delta}^i R_{\alpha\gamma\delta}^\beta$ \\
		$J_{\alpha\gamma}^i J_{\beta\delta}^i R_{\alpha\delta\gamma}^\beta$ 
		& $J_{\alpha\beta}^i X_\gamma X_\gamma(J_{\alpha\beta}^i)$
		& $J_{\alpha\beta}^i X_\beta X_\gamma(J_{\alpha\gamma}^i)$
		& $J_{\alpha\beta}^i X_\gamma X_\beta(J_{\alpha\gamma}^i).$
	\end{tabular}
\end{center}

\cref{Lem:traces} follows now from the fact that the curvature tensor $R$ is skew-symmetric in the first two lower components so that $R_{\alpha\alpha\beta}^{\beta}=0$ and $R_{\alpha\beta\alpha}^{\beta}=-R_{\alpha\beta\beta}^{\alpha}$.  (Note that in this final equality summations are not necessary despite the convention \cref{not:summations}.)
\end{proof}

The following result can be found in \cite{BGRV21} and will be useful in the sequel:
\begin{lem}\label{Lemma_relations_connection_horizontally_parallel}
Let $(\M,\Ho,g_{\Ho})$ be an H-type foliation with horizontally parallel torsion, i.e. $\nabla_\Ho T=0$. Then (1) - (3) hold 
for $X\in  \s\Ho $ and $Z,W\in  \s\Vr $:
\begin{enumerate}
\item $(\nabla_Z J)_W=-(\nabla_W J)_Z$.
\item $(\nabla_Z J)_W$ is skew-symmetric and anti-commutes with $J_W$.
\item $\|(\nabla_Z J)_W X\|^2=\langle R(Z,W)W,Z\rangle\|X\|^2$.
\end{enumerate}
\end{lem}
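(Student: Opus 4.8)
The plan is to reduce all three statements to a single identity tying $\nabla J$ to $\nabla T$, and then to feed in the first Bianchi identity and the Clifford relations. First I would establish the \emph{master formula}
$$g\big((\nabla_U J)_W X, Y\big) = g\big(W, (\nabla_U T)(X,Y)\big), \qquad U \in \s{T\M},\ X,Y \in \s\Ho,\ W \in \s\Vr.$$
This follows by differentiating the defining relation $g(J_W X, Y) = g(W, T(X,Y))$ in the direction $U$: since $\nabla g = 0$, expanding both sides with the product rule and using $(\nabla_U J)_W X = \nabla_U(J_W X) - J_{\nabla_U W}X - J_W\nabla_U X$ together with $(\nabla_U T)(X,Y) = \nabla_U(T(X,Y)) - T(\nabla_U X, Y) - T(X,\nabla_U Y)$, all terms carrying $\nabla_U W$, $\nabla_U X$, $\nabla_U Y$ cancel pairwise. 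In passing this recovers \cref{properties}(3) and shows that $\nabla_\Ho T = 0$ is equivalent to $(\nabla_X J) = 0$ for horizontal $X$.

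For (1) I would take $U = Z$ and invoke the first Bianchi identity \eqref{First_Bianchi_identity}: applied to $X,Y \in \s\Ho$ and $Z \in \s\Vr$, its vertical part gives $(\nabla_Z T)(X,Y) = R(X,Y)Z$, the two remaining cyclic terms vanishing because $T(\Ho,\Vr)=0$. Since $\nabla$ is metric, $R(X,Y)$ is a skew endomorphism, so $g(W, R(X,Y)Z) = -g(Z, R(X,Y)W) = -g((\nabla_W J)_Z X, Y)$, which is exactly $(\nabla_Z J)_W = -(\nabla_W J)_Z$. For the skew-symmetry in (2) I would read off from the master formula that $(\nabla_Z J)_W$ inherits the antisymmetry of $T(X,Y)$ in $X,Y$. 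For the anti-commutation I would apply $\nabla_Z$ to the $H$-type identity $J_W^2 = -\|W\|^2\Id_\Ho$: the term $J_W^2\nabla_Z X$ and the cross terms $J_{\nabla_Z W}J_W + J_W J_{\nabla_Z W}$ (the latter collapsing to $-2g(\nabla_Z W, W)\Id_\Ho$ by the polarized relation \eqref{H-type-condition-polarization}) exactly match the two terms obtained by differentiating the right-hand side $-\|W\|^2 X$, leaving $(\nabla_Z J)_W J_W + J_W (\nabla_Z J)_W = 0$.

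Part (3) is the main obstacle, and here I would use the horizontally parallel torsion in an essential way. Since $(\nabla_Z J)_W$ is skew by (2), the claim is equivalent to the \emph{operator} identity $\big((\nabla_Z J)_W\big)^2 = -\langle R(Z,W)W, Z\rangle\,\Id_\Ho$, which I would prove in two stages. To identify the scalar I would first note that the first Bianchi identity, now applied to $Z,W \in \s\Vr$ and $X \in \s\Ho$, forces $R(Z,W)|_\Ho = 0$; consequently the action of curvature on the tensor $J$ reduces to $(R(Z,W)\cdot J)_V = -J_{R(Z,W)V}$. Pairing this with the Ricci commutation identity $R(Z,W)\cdot J = \nabla^2_{Z,W}J - \nabla^2_{W,Z}J$ (the torsion correction drops since $T(Z,W)=0$), using $\nabla_\Ho T = 0$ to kill the horizontal second covariant derivatives of $J$, and finally contracting with $J_Z$ via the trace identity $\tr(J_V J_{V'}) = -\rh\,g(V,V')$, isolates $\langle R(Z,W)W, Z\rangle$. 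To upgrade this to the pointwise statement I would use parts (1) and (2): $(\nabla_Z J)_W$ anti-commutes with both $J_Z$ and $J_W$ and hence commutes with $J_ZJ_W$, so that its square is compatible with the Clifford module structure and is therefore a scalar operator, whose value is then pinned down by the trace computation.

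The delicate points I expect to fight with are all in (3): justifying that under $\nabla_\Ho T = 0$ only the vertical Ricci term survives in the second-derivative bookkeeping, and then arguing via the Clifford relations that $\big((\nabla_Z J)_W\big)^2$ is genuinely proportional to $\Id_\Ho$ rather than merely equal to the correct scalar after taking a trace. Parts (1) and (2), by contrast, are essentially formal once the master formula and the Bianchi simplification $(\nabla_Z T)(X,Y) = R(X,Y)Z$ are in hand.
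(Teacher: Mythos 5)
Parts (1) and (2) of your proposal are correct and complete: the master formula $g((\nabla_U J)_W X,Y)=g(W,(\nabla_U T)(X,Y))$, the Bianchi reduction $(\nabla_Z T)(X,Y)=R(X,Y)Z$ (vertical part, the curvature terms $R(Y,Z)X$, $R(Z,X)Y$ being horizontal), and the differentiation of the Clifford identities are all valid; note that your arguments for (1)--(2) never use $\nabla_\Ho T=0$, which is consistent, since these two statements hold for any H-type foliation. For comparison, the paper does not prove this lemma at all but cites \cite[Lemmas 2.6, 2.18, 3.5]{BGRV21}, so any self-contained argument is already ``different''; the real question is whether yours closes, and for (3) it does not.

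The gap is in part (3), in both of the places you yourself flagged. First, the Ricci-identity stage has no bridge to $((\nabla_Z J)_W)^2$: the identity $\nabla^2_{Z,W}J-\nabla^2_{W,Z}J=R(Z,W)\cdot J=-J_{R(Z,W)\,\cdot}$ involves only \emph{vertical} second derivatives, so there are no ``horizontal second covariant derivatives'' for $\nabla_\Ho T=0$ to kill, and contracting with $J_Z$ yields $\rh\langle R(Z,W)W,Z\rangle$ on one side but second derivatives of $J$ --- not a square of first derivatives --- on the other. The natural attempt to create that bridge (differentiate the polarized H-type identity twice and antisymmetrize in $Z,W$ so the Ricci identity applies) fails structurally: writing $A=(\nabla_Z J)_W$ and using $(\nabla_Z J)_Z=(\nabla_W J)_W=0$, the $A^2$ terms produced by the Leibniz rule cancel in the antisymmetrization, and one is left with the tautology $\langle R(Z,W)Z,W\rangle+\langle R(Z,W)W,Z\rangle=0$. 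Second, the ``scalar operator'' upgrade is false as stated: commuting with $J_Z$, $J_W$ (equivalently with the rank-two Clifford algebra they generate) does not force an operator to be a multiple of $\Id_\Ho$, because the commutant of such a Clifford module is large (a quaternionic matrix algebra, not $\R$). Concretely, take $\Ho_q\cong\mathbb{H}^2$ with $J_Z$, $J_W$ acting diagonally as right multiplication by $i$, $j$; then $A$ given by right multiplication by $a_1k$ on the first factor and $a_2k$ on the second is skew-symmetric, anticommutes with both $J_Z$ and $J_W$, yet $A^2$ is $-a_1^2$ on one factor and $-a_2^2$ on the other, hence not scalar when $a_1\neq a_2$. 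So properties (1)--(2) plus a trace normalization cannot pin down the pointwise identity; genuine additional geometric input tying $\nabla J$ to the leaf curvature is needed, and that is precisely what the cited \cite[Lemma 3.5]{BGRV21} supplies.
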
 \label{H-type-foliations}

\begin{proof}
See Lemmas $2.6$, $2.18$ and $3.5$ in \cite{BGRV21}.
\end{proof}
To study the traces from \cref{Lem:traces} we introduce the following operator.
\begin{defn}
We define the bundle like operator $M(Z,W)\colon\Ho_q\to\Ho_q$  for $q \in \M$ by
\begin{equation}
X \mapsto M(Z,W)X = J_WJ_Z(\nabla_Z J)_WX
\end{equation}
for $X \in \s\Ho, Z,W \in \s\Vr$.
\end{defn}

\cref{Lemma_relations_connection_horizontally_parallel} immediately implies that $M(Z,W)=0$ for linearly dependent vectors $Z,W\in\Vr_q$, $q \in \M$. 

\begin{lem}\label{trace_index}
Under the assumptions of the preceding lemma, the following holds for $Z,W\in\Vr$:
\begin{enumerate}
\item The symmetric part of the operator $M(Z,W)$ is given by $$N(Z,W):=M(Z,W)+\langle Z,W\rangle(\nabla_Z J)_W.$$ In particular, $M(Z,W)$ is symmetric if $Z$ and $W$ are orthogonal  
$\langle Z,W\rangle=0$.
\item The eigenvalues of the operator $N(Z,W)$ are given by
$$\pm\sqrt{(\|Z\|^2\|W\|^2-\langle Z,W\rangle^2)\langle R(Z,W)W,Z\rangle}.$$
In particular, 
\begin{align}
\textup{tr}\left(M(Z,W)\right)&=\textup{tr}\left(N(Z,W)\right)\\
&=\sigma(Z,W)\sqrt{(\|Z\|^2\|W\|^2-\langle Z,W\rangle^2)\langle R(Z,W)W,Z\rangle},
\end{align}
where $\sigma(Z,W)$ denotes the number of positive eigenvalues minus the number of negative eigenvalues of $N(Z,W)$.
\item The operator $N(Z,W)$ is zero if and only if $\langle R(Z,W)W,Z\rangle=0$, i.e. the sectional curvature of the vertical plane generated by $Z$ and $W$ vanishes. 
Otherwise, $N(Z,W)$ is invertible.
\end{enumerate}
\end{lem}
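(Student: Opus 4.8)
The plan is to reduce all three claims to purely algebraic identities for the single operator $A := (\nabla_Z J)_W$ acting on $\Ho_q$, using the structural relations of \cref{Lemma_relations_connection_horizontally_parallel}. First I would record the properties of $A$ that will be used throughout. By part (2) of that lemma $A$ is skew-symmetric and anti-commutes with $J_W$. Applying part (2) with the roles of $Z$ and $W$ interchanged shows that $(\nabla_W J)_Z$ anti-commutes with $J_Z$, and since part (1) gives $A = (\nabla_Z J)_W = -(\nabla_W J)_Z$, we deduce that $A$ also anti-commutes with $J_Z$. Rewriting part (3) as an operator identity (using $A^* = -A$) yields $A^2 = -\langle R(Z,W)W,Z\rangle\,\Id_\Ho$. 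I will combine these with the polarized $H$-type relation $J_Z J_W + J_W J_Z = -2\langle Z,W\rangle\Id_\Ho$ and $J_Z^2 = -\|Z\|^2\Id_\Ho$.

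For part (1), I would compute the adjoint of $M(Z,W) = J_W J_Z A$ directly. Since $J_W$, $J_Z$ and $A$ are all skew-symmetric, $M^* = -A J_Z J_W$. Pushing $A$ to the right using the two anti-commutation relations gives $A J_Z J_W = J_Z J_W A$, hence $M^* = -J_Z J_W A$; substituting $J_Z J_W = -J_W J_Z - 2\langle Z,W\rangle\Id$ then yields $M^* = M + 2\langle Z,W\rangle A$. Therefore the symmetric part $\tfrac12(M+M^*)$ equals $M + \langle Z,W\rangle A = N(Z,W)$, which reduces to $M$ itself when $\langle Z,W\rangle = 0$.

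For part (2), I would write $N = BA$ with $B := J_W J_Z + \langle Z,W\rangle\Id$. The anti-commutator relation identifies $B$ as exactly the skew-symmetric part of $J_W J_Z$, so $B$ is skew-symmetric, while the anti-commutation of $A$ with both $J_Z$ and $J_W$ shows $A$ commutes with $B$. A short computation with the polarization identity gives $B^2 = -(\|Z\|^2\|W\|^2 - \langle Z,W\rangle^2)\Id$, so that $N^2 = B^2 A^2 = (\|Z\|^2\|W\|^2 - \langle Z,W\rangle^2)\langle R(Z,W)W,Z\rangle\,\Id$. Since $N$ is symmetric its eigenvalues are real and hence equal to $\pm$ the square root of this scalar (which is non-negative by Cauchy--Schwarz together with part (3) of \cref{Lemma_relations_connection_horizontally_parallel}). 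The trace formula follows at once: $\tr M = \tr N$ because $\tr A = 0$ by skew-symmetry, and $\tr N$ is the signed count $\sigma(Z,W)$ of eigenvalues times their common modulus.

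For part (3), if $\langle R(Z,W)W,Z\rangle = 0$ then part (3) of the lemma forces $\|AX\|^2 = 0$ for all $X$, so $A = 0$ and thus $N = BA = 0$. Conversely, if $\langle R(Z,W)W,Z\rangle \neq 0$, then $Z$ and $W$ must be linearly independent (otherwise $R(Z,W) = 0$), so Cauchy--Schwarz gives $\|Z\|^2\|W\|^2 - \langle Z,W\rangle^2 > 0$; hence $N^2$ is a nonzero multiple of $\Id$ and $N$ is invertible. The only step requiring genuine care is the sign bookkeeping in the adjoint computation of part (1) together with the verification that $A$ commutes with $B$ and anti-commutes with $J_Z$; once the two squared identities $A^2 = -\langle R(Z,W)W,Z\rangle\Id$ and $B^2 = -(\|Z\|^2\|W\|^2 - \langle Z,W\rangle^2)\Id$ are in place, the remainder is immediate.
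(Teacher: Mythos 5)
Your proof is correct and rests on the same basic ingredients as the paper's — the relations of \cref{Lemma_relations_connection_horizontally_parallel} and the polarized H-type identity \cref{H-type-condition-polarization} — but it packages them differently. For part (2), the paper fixes an eigenvector $X$ of $N(Z,W)$ and expands $\|N(Z,W)X\|^2$ into inner products, handling the cross term $\langle M(Z,W)(\nabla_ZJ)_WX,X\rangle$ by hand; you instead factor $N=BA$ with $A=(\nabla_ZJ)_W$ and $B=J_WJ_Z+\langle Z,W\rangle\Id_\Ho$, verify that $A$ and $B$ are commuting skew-symmetric operators whose squares are scalar multiples of the identity, and read off the operator identity $N^2=\mu\,\Id_\Ho$ with $\mu=(\|Z\|^2\|W\|^2-\langle Z,W\rangle^2)\langle R(Z,W)W,Z\rangle$. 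This is marginally stronger (an identity of operators rather than a norm identity tested on eigenvectors) and makes the algebraic structure behind the subsequent proposition on eigenspaces transparent. You also make explicit two points the paper leaves implicit: the adjoint identification $M(Z,W)^*=M(W,Z)$ (your computation $M^*=M+2\langle Z,W\rangle A$), on which the paper's first displayed line in part (1) silently relies, and an actual argument for part (3), which the paper dismisses with ``it is sufficient to prove (1) and (2).'' One small reading note: both your argument and the paper's show only that every eigenvalue of $N(Z,W)$ lies in $\{\pm\sqrt{\mu}\}$, not that both signs occur; this is all the trace formula requires.
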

 \begin{proof}
It is sufficient to prove the statements (1) and (2). 
\vspace{1mm}\\
(1): \cref{Lemma_relations_connection_horizontally_parallel} and the $H$-type condition \cref{H-type-condition-polarization} show: 
\begin{align}
N(Z,W)&=\frac{1}{2} \Big{(} M(Z,W)+M(W,Z) \Big{)}\\
&= \frac{1}{2} \Big{(} M(Z,W)+ J_Z J_W(\nabla_WJ)_Z \Big{)}\\
&=\frac{1}{2} \Big{(} M(Z,W)- ( -J_WJ_Z-2 \langle Z,W \rangle ) (\nabla_ZJ)_W \Big{)}\\
&=M(Z,W)+ \langle Z,W \rangle (\nabla_ZJ)_W. 
\end{align}
(2): Note that $M(Z,W)$ and $(\nabla_Z J)_W$ commute according to \cref{Lemma_relations_connection_horizontally_parallel}. Let $X$ be any 
eigenvector of $N(Z,W)$, then we obtain: 
\begin{align}\label{GL_1_Norm_N}
\| N(Z,W)X \|^2
&=\big{\langle} (M(Z,W)+ \langle Z,W \rangle (\nabla_ZJ)_W)^2X, X \big{\rangle}\\
&=\big{\langle} M(Z,W)^2X,X \big{\rangle} + \langle Z,W \rangle^2 \| (\nabla_ZJ)_W X \|^2. \notag
\end{align}
In the last equality we have used \cref{H-type-condition-polarization} and the skew-symmetry of $(\nabla_ZJ)_W$ which show that: 
\begin{align}
\big{\langle} M(Z,W) (\nabla_ZJ)_WX,X \big{\rangle}
&=- \big{\langle} J_WJ_Z (\nabla_ZJ)_WX, (\nabla_ZJ)_WX \big{\rangle}\\
&=  \langle Z,W \rangle \| (\nabla_ZJ)_WX \|^2. 
\end{align}
As for the first summand: 
\begin{align}
\big{\langle} M(Z,W)^2X,X \big{\rangle} 
&= - \big{\langle} J_WJ_Z(\nabla_Z J)_WX, J_ZJ_W(\nabla_ZJ)_WX \big{\rangle}\notag\\
&= \big{\|} J_W J_Z( \nabla_ZJ)_WX\big{\|}^2+2\langle Z,W \rangle \big{\langle} J_W J_Z (\nabla_ZJ)_W, (\nabla_ZJ)_WX \big{\rangle}\notag\\
&= \Big{(} \|Z\|^2\|W\|^2-2\langle Z,W \rangle^2 \Big{)} \big{\|} (\nabla_ZJ)_WX\big{\|}^2. \label{GL_2_Inner_product_M} 
\end{align}
Combining \cref{GL_1_Norm_N} and \cref{GL_2_Inner_product_M} with \cref{Lemma_relations_connection_horizontally_parallel}, this shows
\begin{equation}
\big{\|} N(Z,W)X\|^2= \Big{(} \|Z\|^2 \|W\|^2 - \langle Z,W \rangle^2 \Big{)} \big{\langle} R(Z,W)W,Z \big{\rangle} \|X\|^2,
\end{equation}
which completes the proof.
\end{proof}

We define an important invariant 
\begin{equation} \label{Definition_invariant_tau}
\tv:=\sum_{i,j=1}^{\rv}\tr \hspace{1mm}\big{(}M(Z_i,Z_j)\big{)},
\end{equation}
where $\{Z_1,\cdots,Z_\rv\}$ is a local orthonormal frame of the vertical distribution $\Vr$  and $\tr$ denotes the pointwise matrix trace of $M(Z_i,Z_j) \in \End(\Ho)$. Note that the definition of $\tv$ does not depend on the choice of an orthonormal frame of $\Vr$.

\begin{rem}\label{Remark_formula_for_tau}
If $Z_1,\cdots,Z_\rv$ is an orthonormal frame of $\Vr$, then by \cref{trace_index}, property $(2)$ above we can write
$$\tv =\sum_{ij}\sigma(Z_i,Z_j)\sqrt{R_{ijj}^{i}},$$
Since $\tv$ is independent of the choice of an orthonormal frame $Z_1,\cdots,Z_\rv$, it is natural to ask whether one can use this invariance to further simplify this expression. As we will see below, under further assumption on the leaves of our foliation the invariant $\tv$ will be everywhere constant.
\end{rem}
In the next proposition we need the notion of {\it horizontally parallel Clifford structure} on an $H$-type foliation. We start with a definition (see \cite[Definition 3.1]{BGRV21}). 
\begin{defn}
An $H$-type foliation $(\M, \mathcal{H}, g_{\mathcal{H}})$ with horizontally parallel torsion $T$ is said to have horizontally parallel Clifford structure if there exists a smooth 
bundle map $\Psi: \mathcal{V} \times \mathcal{V} \longrightarrow C\ell_2(\mathcal{V})$ (Clifford  bundle) such that for all $Z_1, Z_2 \in \Gamma(\mathcal{V})$: 
\begin{equation}\label{condition_horizontally_parallel_CS}
(\nabla_{Z_1}J)_{Z_2}= J_{\Psi(Z_1,Z_2)}.
\end{equation} 
\end{defn}
We refer to \cite{BGRV21} for examples and further properties. With this we can prove: |
\begin{prop}\label{Prop: eigenspaces}
Suppose $\rv \geq 2$ and assume that the torsion is horizontally parallel. Then
\begin{enumerate}
\item Let $Z,W \in \Vr$ be orthonormal, and suppose that $X \in \s\Ho$ is a $\lambda$-eigenvector for $M(Z,W) = N(Z,W) = J_WJ_Z (\nabla_Z J)_W$. The vectors $J_ZX, J_WX, J_ZJ_WX$ are all $\lambda$-eigenvectors for $N(Z,W)$.  
\item $\sigma = 4k$, $k\in\mathbb N$.  
\end{enumerate}
Assume moreover that the sectional curvature $\kv$ of the leaves is a positive constant; that is, for any $Z,W\in\Vr$
\begin{equation}
\langle R(Z,W)W,Z\rangle=\kv\left(\|Z\|^2\|W\|^2-\langle Z,W\rangle^2\right).
\end{equation}
It then follows that
\begin{enumerate}[resume]
\item $\sigma(Z,W)=\sigma$ is independent of the choice of  vectors $Z,W\in\Vr$ and it holds:
$$\tv=\rv(\rv-1)\sigma \sqrt{\kv}.$$ 
\item If $\M$ also has an horizontally parallel Clifford structure (see \cite[Theorem 3.6]{BGRV21}) then $\sigma$ and $\tv$ are constant; specifically
\begin{equation}
\sigma = -\rh \text{ and } \tv = -\rv(\rv-1)\rh\sqrt{\kv}.
\end{equation}
\end{enumerate}
\end{prop}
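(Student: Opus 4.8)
The plan is to establish the four claims sequentially, using the eigenvector structure in (1) to build up the counting in (2), then imposing the constant curvature hypothesis for (3), and finally the horizontally parallel Clifford structure for (4).

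For claim (1), I would fix orthonormal $Z,W \in \Vr$ and recall from \cref{trace_index} that $M(Z,W) = N(Z,W) = J_W J_Z (\nabla_Z J)_W$ when $\langle Z,W\rangle = 0$. The key is to show that the three operators $J_Z$, $J_W$, and $J_Z J_W$ each commute with $N(Z,W)$, so that they preserve eigenspaces. This should follow from the anti-commutation relations: by \cref{Lemma_relations_connection_horizontally_parallel} (2), $(\nabla_Z J)_W$ anti-commutes with $J_W$, and by the $H$-type condition \cref{H-type-condition-polarization} with $\langle Z,W\rangle = 0$ we have $J_Z J_W = -J_W J_Z$. I would carefully track how each of $J_Z, J_W, J_ZJ_W$ commutes or anti-commutes with each factor $J_W$, $J_Z$, and $(\nabla_Z J)_W$ in the product defining $N(Z,W)$; the signs should combine so that each of the three operators commutes with $N(Z,W)$ as a whole. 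Once commutativity is established, if $X$ is a $\lambda$-eigenvector then so are $J_Z X, J_W X, J_Z J_W X$.

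For claim (2), I would argue that the four vectors $X, J_Z X, J_W X, J_Z J_W X$ are linearly independent (indeed pairwise orthogonal, using skew-symmetry of the $J$'s and the $H$-type relations $J_Z^2 = J_W^2 = -\Id$), so each eigenspace of $N(Z,W)$ has dimension divisible by $4$. Since by \cref{trace_index} (2) the nonzero eigenvalues of $N(Z,W)$ come in pairs $\pm\sqrt{\cdots}$, and $\sigma$ is the signed count of eigenvalues, both the positive and negative eigenspaces have dimension a multiple of $4$; hence $\sigma = 4k$ for some $k \in \mathbb{N}$. The main obstacle here is verifying orthogonality and independence of the four vectors cleanly, which reduces to short computations with the Clifford relations.

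For claim (3), I would substitute the constant-curvature hypothesis $\langle R(Z,W)W,Z\rangle = \kv(\|Z\|^2\|W\|^2 - \langle Z,W\rangle^2)$ into the eigenvalue formula from \cref{trace_index} (2). For orthonormal $Z,W$ this gives eigenvalues $\pm\sqrt{\kv}$, so $\tr(M(Z,W)) = \sigma(Z,W)\sqrt{\kv}$; a continuity/connectedness argument shows the integer-valued $\sigma(Z,W)$ is locally constant, hence equal to a fixed $\sigma$ independent of $Z,W$. Summing \cref{Definition_invariant_tau} over an orthonormal frame, the diagonal terms vanish (since $M(Z_i,Z_i)=0$) leaving $\rv(\rv-1)$ off-diagonal terms each contributing $\sigma\sqrt{\kv}$, giving $\tv = \rv(\rv-1)\sigma\sqrt{\kv}$. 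For claim (4), under the horizontally parallel Clifford structure \cref{condition_horizontally_parallel_CS} we have $(\nabla_Z J)_W = J_{\Psi(Z,W)}$, so for orthonormal $Z,W$ the operator $N(Z,W) = J_W J_Z J_{\Psi(Z,W)}$ becomes a product of three Clifford multiplications. Using \cite[Theorem 3.6]{BGRV21} to identify $\Psi(Z,W)$ explicitly, I expect $N(Z,W)$ to be (a constant multiple of) $-\Id$ on all of $\Ho_q$, forcing all $\rh$ eigenvalues to have the same sign and hence $\sigma = -\rh$; substituting into the formula from (3) yields $\tv = -\rv(\rv-1)\rh\sqrt{\kv}$. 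The hard part throughout will be claim (1): getting the commutation signs exactly right in the triple product $J_W J_Z (\nabla_Z J)_W$, since a single sign error propagates through the eigenspace and counting arguments.
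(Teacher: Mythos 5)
Your proposal is correct and follows essentially the same route as the paper's own proof: commutation of $J_Z$, $J_W$, $J_ZJ_W$ with $N(Z,W)$ via the anti-commutation relations of \cref{Lemma_relations_connection_horizontally_parallel} and the $H$-type condition, four-dimensional eigenspaces giving $\sigma=4k$, the integer-valued function $\sigma$ on the connected Grassmannian $\G_2(\Vr)$ being constant, and finally $(\nabla_Z J)_W=-\sqrt{\kv}\,J_ZJ_W$ from \cite[Theorem 3.6]{BGRV21} forcing $N(Z,W)=-\sqrt{\kv}\,\Id_\Ho$ and hence $\sigma=-\rh$. Your added explicit check that $X, J_ZX, J_WX, J_ZJ_WX$ are pairwise orthogonal is a welcome detail the paper leaves implicit, but it does not change the argument.
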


\begin{proof}  
(1) It follows from \cref{Lemma_relations_connection_horizontally_parallel} that $J_Z$ and $J_W$ anti-commute with $(\nabla_Z J)_W$, and 
from the $H$-type condition that they also anti-commute with $J_WJ_Z$; together these imply that $N(Z,W)$ commutes with each of 
$J_Z, J_W,$ and $J_ZJ_W$, completing the proof of (1). Lemma~\ref{trace_index} (2) shows that 
$$
\lambda=\pm\sqrt{\langle R(Z,W)W,Z\rangle}
$$
and the vector space ${\rm span}\{X,J_ZX, J_WX,J_ZJ_WX\}$ is an eigenspace with eigenvalue $\lambda$. At the point $q\in \M$ at the level of the tangent group it will form a subgroup isomorphic to the complexified Heisenberg group.

(2) $N(Z,W)$ is symmetric, therefore diagonalizable. Partition $\Ho_q$ with $q \in \M$ into eigenspaces spanned by $X_1,(J_ZX)_q, (J_WX)_q,(J_VJ_WX)_q$, which is always possible by (1); this immediately implies $\sigma = 4k$ for some $k \in \mathbb Z$.  	

(3) Let $Z,W\in\Vr$ be linearly independent. By \cref{trace_index}, we know that
\begin{equation}
\tr(N(Z,W)) = \sigma(Z,W) \sqrt{(\|Z\|^2\|W\|^2 - \langle Z,W\rangle^2)\langle R(Z,W)W, Z \rangle}.
\end{equation}
The idea is to consider  $\sigma$ as a function defined on the Grassmann $2$-plane bundle $G_2(\Vr)$ as follows:
\begin{equation}
\sigma(Z,W)=\frac{\textup{tr}(N(Z,W))}{\sqrt{(\|Z\|^2\|W\|^2 - \langle Z,W\rangle^2)\langle R(Z,W)W,Z\rangle}}.
\end{equation}
Note that the right hand side of this equation depends only on the $2$-plane generated by $Z$ and $W$. In that way, we consider $\sigma \colon \G_2(\Vr) \rightarrow \R$ which is a smooth function with discrete values in $\mathbb{Z}$. Since $\G_2(\Vr)$ is connected, it follows that $\sigma$ is a constant function.

Hence 
\begin{equation}
\tv = \sum_{i\neq j,\ i,j=1}^{\rv} \tr(M(Z_i,Z_j)) = \sum_{i\neq j,\ i,j=1}^{\rv} \sigma \sqrt{\kv} = \rv(\rv-1)\sigma\sqrt{\kv}
\end{equation}
by $\tr(M(Z,W)) = \tr(N(Z,W))$. This shows that $\tv$ is constant on $\M$ with value 
\begin{equation}
\tv=\rv(\rv-1)\sigma \sqrt{\kv}.
\end{equation}

(4) According to \cite[Theorem 3.6]{BGRV21} the assumption of having horizontally parallel Clifford structure implies that the sectional curvature $\kv$ of the leaves of the foliation associated to $\Vr$ is constant and 
\begin{equation}
(\nabla_{Z_i}J)_{Z_j} = -\sqrt{\kv} J_{Z_i} J_{Z_j}
\end{equation}
Then all eigenvalues are the same $\lambda = -\sqrt{\kv}$.  The claim follows immediately. 
\end{proof}

\begin{prop}
Assume that the torsion is horizontally parallel and recall from \eqref{Def: scalar curvature} the scalar curvature $\kh$. Then at $q\in \M$:	
\begin{enumerate}
\item $J_{\alpha\gamma}^{i}J_{\beta\delta}^{i}R_{\alpha\delta\gamma}^{\beta}=\langle R(X_\alpha,J_{Z_i}X_\beta)J_{Z_i}X_\alpha,X_\beta\rangle=\kh+ 2\tv,$
\item $J_{\alpha\gamma}^{i}J_{\beta\delta}^{i}R_{\alpha\gamma\delta}^{\beta}=\langle R(X_\alpha,J_{Z_i}X_\alpha)J_{Z_i}X_\beta,X_\beta\rangle=2\kh + 4\tv, $
\item $J_{\alpha\beta}^{i}X_\gamma X_\gamma(J_{\alpha\beta}^{i}) = 0, $
\item $J_{\alpha\beta}^{i}X_\beta X_\gamma(J_{\alpha\gamma}^{i})=\frac{1}{2}\tv,$
\item $J_{\alpha\beta}^{i}X_\gamma X_\beta(J_{\alpha\gamma}^{i})=-\frac{1}{2}\tv.$
\end{enumerate}
We recall that the summations are implied, per \ref{not:summations}.
\end{prop}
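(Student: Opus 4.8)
The plan is to reduce every trace to $\kh$ and $\tv$ by computing in the special frame at $q$, where all connection $1$-forms vanish (\cref{properties}(2)) and, under $\nabla_\Ho T=0$, all first horizontal derivatives of $J^i_{\alpha\beta}$ vanish (\cref{properties}(3)). The central tool will be a formula for the second horizontal derivatives at $q$. Since $\nabla_\Ho T=0$ forces the intrinsic derivative $(\nabla_{X_\gamma}J)$ to vanish, differentiating $J^i_{\alpha\beta}=g(J_iX_\alpha,X_\beta)$ leaves only connection terms; differentiating a second time at $q$ and inserting $X_\delta(\omega^b_a(X_\gamma))|_q=\tfrac12 R^b_{\delta\gamma a}(q)$ (read off from $\omega^{b(2)}_a=\tfrac12 R^b_{\mu\nu a}(q)x^\mu dx^\nu$ in \cref{taylor_exp}) yields the master identity
\[
X_\delta X_\gamma(J^i_{\alpha\beta})|_q=\tfrac12\big(R^j_{\delta\gamma i}J^j_{\alpha\beta}+R^\mu_{\delta\gamma\alpha}J^i_{\mu\beta}+R^\mu_{\delta\gamma\beta}J^i_{\alpha\mu}\big),
\]
which is manifestly antisymmetric in the derivative indices $\delta,\gamma$.

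Items (3)--(5) follow quickly from this. For (3), setting $\delta=\gamma$ and summing kills the right-hand side since $R^b_{\gamma\gamma a}=0$ (equivalently, $\sum_{\alpha,i}(J^i_{\alpha\beta})^2$ is constant). For (4) and (5), the antisymmetry forces the symmetric combination $J^i_{\alpha\beta}(X_\beta X_\gamma+X_\gamma X_\beta)(J^i_{\alpha\gamma})|_q$ to vanish, so the two traces are negatives of one another; their difference is $J^i_{\alpha\beta}[X_\beta,X_\gamma](J^i_{\alpha\gamma})|_q$. Since $[X_\beta,X_\gamma]|_q=-T(X_\beta,X_\gamma)=-J^k_{\beta\gamma}Z_k$ and $Z_k(J^i_{\alpha\gamma})|_q=g((\nabla_{Z_k}J)_{Z_i}X_\alpha,X_\gamma)$, I would rewrite this difference using $\sum_\beta J^i_{\alpha\beta}J^k_{\beta\gamma}=g(J_kJ_iX_\alpha,X_\gamma)$ as $-\sum_{i,k}\tr\!\big(J_iJ_k(\nabla_{Z_k}J)_{Z_i}\big)=-\sum_{i,k}\tr M(Z_k,Z_i)$, which by \eqref{Definition_invariant_tau} equals $\mp\tv$ up to the operator-ordering convention. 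Hence (4) and (5) are $\pm\tfrac12\tv$.

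For (1) and (2) the difficulty is that these curvatures carry only horizontal indices whereas $\tv$ is built from the vertical curvature, so two bridges are needed. First, the cyclic sum in \eqref{First_Bianchi_identity} is vertical, so for horizontal $X,Y,Z$ the horizontal quantity $\circlearrowright R(X,Y)Z$ must vanish; together with the metric antisymmetries this endows the horizontal curvature with all the usual algebraic symmetries, in particular pair symmetry $R_{\alpha\beta\gamma\delta}=R_{\gamma\delta\alpha\beta}$. Second, $\nabla^2_{\Ho,\Ho}J=0$ (a consequence of $(\nabla_XJ)=0$) combined with the Ricci commutation identity for a connection with torsion gives $\mathcal R(X,Y)J=\nabla_{T(X,Y)}J$, which reads
\[
[R(X_\alpha,X_\beta),J_i]-J_{R(X_\alpha,X_\beta)Z_i}=J^k_{\alpha\beta}(\nabla_{Z_k}J)_{Z_i}
\]
and converts horizontal curvature contracted against the $J_i$ into the vertical data $(\nabla_{Z_k}J)_{Z_i}$, i.e. ultimately into $\tr M$. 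I would then expand the two traces, use pair symmetry to rearrange the curvature, apply this relation to isolate the $(\nabla_ZJ)$ part, and collapse the remaining $J$-contractions via the Clifford identity \eqref{H-type-condition-polarization} (e.g. $\sum_{\alpha,i}J^i_{\alpha\beta}J^i_{\alpha\mu}=\rv\delta_{\beta\mu}$ and $\sum_{\alpha,\beta}J^i_{\alpha\beta}J^k_{\alpha\beta}=\rh\delta_{ik}$) together with \cref{Lemma_relations_connection_horizontally_parallel}; the purely horizontal pieces reassemble into $\kh$ and the $(\nabla_{Z_k}J)_{Z_i}$ pieces into $\tv$, giving $\kh+2\tv$ and $2\kh+4\tv$, with (2) equal to twice (1) serving as a consistency check.

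The main obstacle is exactly this last step: matching the precise rational coefficients while shuttling between the horizontal curvature (controlled by pair symmetry and the displayed key relation) and the vertical-plane invariant $\tv$ (controlled by $M$ and \cref{trace_index}). All the $H$-type/Clifford bookkeeping must be carried out carefully, since the torsion of the Bott connection destroys the naive curvature symmetries outside the purely horizontal sector rescued by \eqref{First_Bianchi_identity}.
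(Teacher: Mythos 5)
Your items (3)--(5) are essentially sound, and your mechanism is a legitimate variant of the paper's: the master identity
\begin{equation}
X_\delta X_\gamma(J_{\alpha\beta}^{i})\big|_q=\tfrac{1}{2}\big(R_{\delta\gamma i}^{j}J_{\alpha\beta}^{j}+R_{\delta\gamma\alpha}^{\mu}J_{\mu\beta}^{i}+R_{\delta\gamma\beta}^{\mu}J_{\alpha\mu}^{i}\big)
\end{equation}
is correct (it follows from $\nabla_\Ho J=0$ together with $X_\delta(\omega_a^b(X_\gamma))|_q=\tfrac12 R_{\delta\gamma a}^{b}(q)$, read off from \cref{taylor_exp}), whereas the paper obtains the same antisymmetry in the derivative indices by differentiating the Clifford identities $J_{\alpha\beta}^{i}J_{\alpha\beta}^{i}=\mathrm{const}$ and $J_{\alpha\beta}^{i}J_{\alpha\gamma}^{i}=\delta_{\beta\gamma}$ twice at $q$; from there your reduction of twice the quantity in (4) to the bracket term and then to $-\sum_{i,k}\tr M(Z_k,Z_i)$ coincides with the paper's computation. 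One caveat: a proof cannot end with ``$\pm\tv$ up to the operator-ordering convention.'' Committing to the paper's conventions, $[X_\beta,X_\gamma]|_q=-T(X_\beta,X_\gamma)=-J_{\beta\gamma}^{k}Z_k$, your bookkeeping yields $(4)=-\tfrac12\tv$, the opposite sign to the statement; the paper's own proof writes $[X_\beta,X_\gamma](J_{\alpha\gamma}^{i})=J_{\beta\gamma}^{j}Z_j(J_{\alpha\gamma}^{i})$, silently dropping that minus sign. So your hedge is detecting a real sign discrepancy, but you must resolve it and commit, not leave both options open.

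The genuine gap is in (1) and (2): you assemble the right tools and then stop exactly where the proof begins. Your displayed commutation relation is indeed the crux --- it is \eqref{Commutator_R_J} combined with $R(X,Y)Z_i=(\nabla_{Z_i}T)(X,Y)$, which follows from \eqref{First_Bianchi_identity} when $\nabla_\Ho T=0$ --- but the passage from it to $\kh+2\tv$ and $2\kh+4\tv$ is precisely what you label ``the main obstacle'' and defer, so those two values are copied from the statement rather than derived. The missing computation is short and, notably, does not use pair symmetry of the horizontal curvature at all. For (1): write $R(X_\alpha,J_iX_\beta)J_iX_\alpha=J_iR(X_\alpha,J_iX_\beta)X_\alpha+[R(X_\alpha,J_iX_\beta),J_i]X_\alpha$; for each fixed $i$ the first trace collapses to $\kh$ via the relabeling $J_iX_\beta\mapsto X_\gamma$ (each $J_i$ maps the orthonormal frame to an orthonormal frame), and each of the two terms produced by the commutator, after expanding $T(X_\alpha,X_\beta)=J_{\alpha\beta}^{j}Z_j$ and using skew-symmetry of $(\nabla_ZJ)_W$, becomes $\sum_{i,j}\tr\big(J_{Z_j}J_{Z_i}(\nabla_{Z_i}J)_{Z_j}\big)=\tv$ by \eqref{Definition_invariant_tau}. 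For (2): apply \eqref{First_Bianchi_identity} to the cyclic sum over $X_\alpha$, $J_iX_\alpha$, $J_iX_\beta$ paired against $X_\beta$ --- the cyclic curvature sum is vertical, hence orthogonal to $X_\beta$ --- and the same relabeling shows the two remaining terms each equal minus the quantity in (1), so (2) is twice (1). Until these contractions are actually performed (and the sign convention above is pinned down), items (1), (2), (4), (5) are not established by your argument.
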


\begin{proof}
(1) We recall the following formula for the commutator of $R(\cdot,\cdot)$ and $J$ on horizontal vectors.  A proof of \cref{Commutator_R_J} can be found in \cite[Lemma 3.18]{BGRV21}. 
\begin{equation}\label{Commutator_R_J}
[R(X,Y), J_Z] = \left(\nabla_{T(X,Y)} J \right)_Z + J_{ (\nabla_Z T) (X,Y)}
\end{equation}
for $X,Y\in \s\Ho$ and $Z \in \s\Vr$. Hence, we can write
\begin{align}
J_{\alpha\gamma}^i J_{\beta\delta}^i R_{\alpha\delta\gamma}^\beta &= \langle R(X_\alpha,J_{Z_i} X_\beta) J_{Z_i} X_\alpha, X_\beta\rangle \\
&= \langle J_{Z_i} R(X_\alpha,J_{Z_i}X_\beta) X_\alpha, X_\beta \rangle \\
&+\left\langle \left(\nabla_{T(X_\alpha, J_{Z_i}X_\beta)}J \right)_{Z_i} X_\alpha, X_\beta \right\rangle + \left\langle J_{ \left(\nabla_{Z_i} T) (X_\alpha, J_{Z_i}X_\beta) \right)} X_\alpha, X_\beta \right\rangle.
\end{align}

Since $J_{Z_i}$ is an isometry, writing $J_{Z_i}X_\beta = X_\gamma$ and $X_\beta = -J_{Z_i} X_\gamma$ the first term on the right hand side $\langle J_{Z_i} R(X_\alpha,J_{Z_i}X_\beta) X_\alpha, X_\beta \rangle$ can be interpreted as the scalar curvature $\kh$. The second term can be rewritten in the form
\begin{align}
\left\langle \left(\nabla_{T(X_\alpha,J_{Z_i}X_\beta)} J \right)_{Z_i} X_\alpha,X_\beta \right\rangle
&= -\left\langle \left( \nabla_{T(X_\alpha,X_\beta)}J \right)_{Z_i} X_\alpha, J_{Z_i}X_\beta \right\rangle \\
&= -\langle J_{Z_j}X_\alpha, X_\beta \rangle \cdot \left\langle \left(\nabla_{Z_j}J \right)_{Z_i} X_\alpha, J_{Z_i}X_\beta \right\rangle \\
&= \left\langle J_{Z_j}X_\alpha, J_{Z_i}\left(\nabla_{Z_j}J \right)_{Z_i} X_\alpha \right\rangle \\
&= \left\langle X_\alpha, J_{Z_j}J_{Z_i} \left(\nabla_{Z_i}J \right)_{Z_j} X_\alpha \right\rangle \\
&= \tr\left(J_{Z_j}J_{Z_i} \left(\nabla_{Z_i}J \right)_{Z_j}\right).
\end{align}

Finally, the last term can be expressed as
\begin{align}
\Big{\langle} J_{ \left(\nabla_{Z_i} T)(X_\alpha,J_{Z_i}X_\beta)\right)} X_\alpha,X_\beta \Big{\rangle}&
=-\Big{\langle} J_{ \left(\nabla_{Z_i} T)(X_\alpha,X_\beta)\right)} X_\alpha,J_{Z_i}X_\beta\Big{\rangle}\\
&=-\langle  \left(\nabla_{Z_i} T\right)(X_\alpha,X_\beta),Z_j\rangle\cdot \langle J_{Z_j}X_\alpha,J_{Z_i}X_\beta\rangle\\
&=\langle\left(\nabla_{Z_i}J\right)_{Z_j}X_\alpha,X_\beta\rangle\cdot\langle J_{Z_i}J_{Z_j}X_\alpha,X_\beta\rangle\\
&=\langle J_{Z_j}J_{Z_i}\left(\nabla_{Z_i}J\right)_{Z_j}X_\alpha,X_\alpha\rangle\\
&=\text{tr}\left(J_{Z_j}J_{Z_i}\left(\nabla_{Z_i}J\right)_{Z_j}\right).
\end{align}
Here in the second line we used that $(\nabla_{Z_i} T)(X_\alpha,X_\beta)$ has components only in the direction of $\Vr$.  For the third line we applied the formula
\begin{equation}
\langle \left(\nabla_WJ\right)_ZX,Y\rangle=\langle Z,\left(\nabla_W T\right)(X,Y)\rangle,\quad X,Y\in\s\Ho ,\quad Z,W\in\s\Vr 
\end{equation}
that is the consequence of the differentiation
\begin{equation}
W\left(\langle J_Z X,Y\rangle\right)=W\left(\langle Z,T(X,Y)\rangle\right).
\end{equation}

(2) Using the first Bianchi identity \cref{First_Bianchi_identity} and the fact that $T(\Ho,\Ho)\subset \Vr$, $T(\Ho,\Vr)=0$, we obtain: 
\begin{multline}
\langle R(X_\alpha,J_{Z_i}X_\alpha)J_{Z_i}X_\beta,X_\beta\rangle+\\ +\langle R(J_{Z_i}X_\alpha,J_{Z_i}X_\beta)X_\alpha,X_\beta\rangle+\langle R(J_{Z_i}X_\beta,X_\alpha)J_{Z_i}X_\alpha,X_\beta\rangle=0.
\end{multline}
By using $J_{Z_i}X_\alpha=X_{\gamma}$, $X_{\alpha}=-J_{Z_i}X_\gamma$ for the second term and skew symmetry of the curvature tensor with respect to the two first vectors, see \cite[Lemma 3.7]{H12}, we obtain the desired result.

(3) By the H-type condition $J_{Z_i}^2=-\Id_\Ho$, it follows that $J_{\alpha\beta}^{i} J_{\alpha\beta}^{i}=1$ for all $\alpha$. Taking the first derivative along $X_\gamma$, we obtain $J_{\alpha\beta}^{i} X_\gamma\left(J_{\alpha\beta}^{i}\right)=0$ for all $\alpha, \beta,i$, and $\gamma$. Taking again the derivative along $X_\gamma$, we obtain
\begin{equation}
X_\gamma\left(J_{\alpha\beta}^{i}\right)X_\gamma\left(J_{\alpha\beta}^{i}\right)+J_{\alpha\beta}^{i}X_\gamma X_\gamma\left(J_{\alpha\beta}^{i}\right)=0.
\end{equation}
Since the torsion is horizontally parallel  and $T(\Ho,\Ho)\subset \Vr$ it follows that $X_\gamma\left(J_{\alpha\beta}^{i}\right)(q)$ vanishes at $q\in \M$ and hence
\begin{equation}
J_{\alpha\beta}^{i}X_\gamma X_\gamma\left(J_{\alpha\beta}^{i}\right)=0.
\end{equation}

(4) Using the H-type condition $J_{Z_i}^2=-\Id_\Ho$, it follows that $J_{\alpha\beta}^{i} J_{\alpha\gamma}^{i}=\delta_{\beta\gamma}$ for all $\beta$ and $\gamma$. Hence, using the assumption that the torsion is horizontally parallel and \cref{properties} (3), we can write at $q \in \M$
\begin{equation}
J_{\alpha\beta}^{i}X_\beta X_\gamma\left(J_{\alpha\gamma}^{i}\right)+X_\beta X_\gamma\left(J_{\alpha\beta}^{i}\right)J_{\alpha\gamma}^{i}=0.
\end{equation}
This shows that at $q$
\begin{equation}
J_{\alpha\beta}^{i}X_\beta X_\gamma\left(J_{\alpha\gamma}^{i}\right)=-J_{\alpha\beta}^{i}X_\gamma X_\beta\left(J_{\alpha\gamma}^{i}\right).
\end{equation}
Now, we write at $q$
\begin{align}
J_{\alpha\beta}^{i}X_\beta X_\gamma\left(J_{\alpha\gamma}^{i}\right)&=\frac{1}{2} J_{\alpha\beta}^{i} [X_\beta,X_\gamma]\left(J_{\alpha\gamma}^{i}\right)\\
&=\frac{1}{2} J_{\alpha\beta}^{i} J_{\beta\gamma}^{j}Z_j\left(J_{\alpha\gamma}^{i}\right)\\
&=-\frac{1}{2} \langle J_{Z_i}X_\alpha,J_{Z_j}X_\gamma\rangle\langle \left(\nabla_{Z_j}J\right)_{Z_i}X_\alpha,X_\gamma\rangle\\
&=\frac{1}{2}\tv(q).
\end{align}

(5) The last statement can be shown by similar calculation.
\end{proof}

\begin{rem}
Note that the traces in $(1)$ and $(2)$ from the preceding proposition were computed in \cite{IMV14} for quaternionic contact manifolds endowed with the Biquard connection. Therein it was shown that these traces are of the form $C\kh$ where $C$ is a dimensional constant and $\kh$ is the scalar curvature with respect to the Biquard connection. Furthermore, any $3$-Sasakian manifold can be considered both as an H-type foliation (endowed with the Bott connection) with horizontally parallel torsion, or as a quaternionic contact manifold (endowed with the Biquard connection). In that setting, our result about the traces in $(1)$ and $(2)$ coincides with the result from \cite{IMV14}, because both local invariants $\kh$ and $\tv$ are global constants and related to each other by some dimensional constant.
\end{rem}

Now, we summarize the preceding computations in the following main theorem:
\begin{thm}\label{thmA}
Let $(\M,\Ho,g_\Ho)$ be an H-type foliation with horizontally parallel torsion. Then the second heat invariant $c_1$ is a linear combination of the local invariants $\kh$ and $\tv$:
\begin{equation}
c_1 =C_1\kh + C_2\tv,
\end{equation}
where $C_1$ and $C_2$ are universal constants depending only on $\rh$ and $\rv$.
\end{thm}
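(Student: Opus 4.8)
The plan is to deduce the statement directly from \cref{Lem:traces} combined with the trace evaluations established in the preceding proposition; by this stage all of the genuine analytic and algebraic work has been completed, so the argument reduces to a single substitution followed by a collection of like terms.

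First I would invoke \cref{Lem:traces}, which under the hypothesis $\nabla_\Ho T = 0$ writes the second heat invariant at $q$ as a linear combination
\begin{equation}
c_1(q) = a_1 T_1 + a_2 T_2 + a_3 T_3 + a_4 T_4 + a_5 T_5 + a_6 T_6
\end{equation}
of the six traces $T_1 = R_{\alpha\beta\beta}^{\alpha}$, $T_2 = J_{\alpha\gamma}^{i}J_{\beta\delta}^{i} R_{\alpha\gamma\delta}^{\beta}$, $T_3 = J_{\alpha\gamma}^{i}J_{\beta\delta}^{i}R_{\alpha\delta\gamma}^{\beta}$, $T_4 = J_{\alpha\beta}^{i}X_\gamma X_\gamma(J_{\alpha\beta}^{i})$, $T_5 = J_{\alpha\beta}^{i}X_\beta X_\gamma(J_{\alpha\gamma}^{i})$, and $T_6 = J_{\alpha\beta}^{i}X_\gamma X_\beta(J_{\alpha\gamma}^{i})$. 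The coefficients $a_1,\dots,a_6$ are precisely those produced by the complete-contraction step, and the point I would stress is that they are universal constants depending only on $\rh$ and $\rv$: they originate from the convolution integral \eqref{second_coef} against the nilpotent heat kernel $\hat K_1$ of the tangent $H$-type group, whose scaling and integral data are fixed once $\rh$ and $\rv$ are fixed.

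Next I would substitute the closed-form evaluations of these traces. Recognizing $T_1 = \kh$ by definition \eqref{Def: scalar curvature}, the preceding proposition supplies
\begin{align}
T_1 &= \kh, & T_2 &= 2\kh + 4\tv, & T_3 &= \kh + 2\tv, \\
T_4 &= 0, & T_5 &= \tfrac{1}{2}\tv, & T_6 &= -\tfrac{1}{2}\tv.
\end{align}
Feeding these into the expression for $c_1(q)$ and grouping the $\kh$- and $\tv$-contributions yields
\begin{equation}
c_1 = \left(a_1 + 2a_2 + a_3\right)\kh + \left(4a_2 + 2a_3 + \tfrac{1}{2}a_5 - \tfrac{1}{2}a_6\right)\tv =: C_1 \kh + C_2 \tv,
\end{equation}
which is the claimed decomposition; since each $a_k$ depends only on $\rh$ and $\rv$, so do $C_1$ and $C_2$.

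Finally, I would indicate where the real difficulty lies. At the level of the present theorem there is no computational obstacle remaining, as the substitution above is elementary. The genuine work is upstream and I would simply cite it: the reduction of $c_1(q)$ to a finite list of curvature and torsion components through \eqref{second_coef} and the methods of \cite{VHT21}, the passage to the six invariant traces via \cref{Lem:traces} (which rests on the classical invariance theory of ${\bf O}(\rh)\times{\bf O}(\rv)$), and the trace evaluations in the preceding proposition (which use \cref{Lemma_relations_connection_horizontally_parallel}, the first Bianchi identity \eqref{First_Bianchi_identity}, and the commutator formula \eqref{Commutator_R_J}). The only subtlety specific to this statement is the assertion that $C_1$ and $C_2$ are dimensional, which I would justify as above by tracing the surviving coefficients back to the nilpotent model $\mathbb G(q)$.
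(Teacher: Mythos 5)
Your proposal is correct and follows exactly the paper's route: the paper proves \cref{thmA} by combining \cref{Lem:traces} with the trace evaluations of the preceding proposition, precisely the substitution and collection of terms you carry out. Your explicit bookkeeping of the coefficients $a_1,\dots,a_6$ and the remark that their universality traces back to the nilpotent model $\mathbb{G}(q)$ is a slightly more careful rendering of a point the paper leaves implicit, but it is the same argument.
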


\section{Open Problems}
We present a list of open problems that seem closely related to our analysis: 

Let us consider the open subset $\mathcal{U}$ of the Grassmann 2-plane bundle $\G_2(\Vr)$ defined by
\begin{equation}
\mathcal{U}:= \{Z \wedge W \in \G_2(\Vr): S(Z \wedge W) \neq 0\},
\end{equation}
where $S(Z \wedge W)$ denotes the sectional curvature of the plane spanned by $Z,W$ and the map $\sigma: \mathcal{U}\longrightarrow \mathbb{Z}$, which assigns to $Z \wedge W$ the difference of the numbers of positive and negative eigenvalues of $N(Z,W)$. 

Note that $\mathcal{U} = \emptyset$ if and only if the torsion $T$ is completely parallel. Assume now that $\mathcal{U}\neq\emptyset$, then it is natural to ask: 
\begin{enumerate}
\item[(a)] {\it Is the map $\sigma$ constant on $\mathcal{U}$?}
\end{enumerate}
Let $\{Z_1, \ldots , Z_m\}$ be an orthonormal frame of $\Vr$. Then, the invariant $\tau$ can be expressed as (see \cref{Remark_formula_for_tau} and \cref{trace_index}, (2)):
\begin{equation}
\tau = \sum_{i,j} \sigma_{ij} \sqrt{R_{ijj}^i},
\end{equation}
where $\sigma_{ij}:=\sigma(Z_i \wedge Z_j).$
\begin{enumerate}
\item [(b)] {\it Are the $\sigma_{ij}$'s independent of $i$ and $j$? Are the sectional curvatures $R_{ijj}^{i}$ independent of $i$ and $j$? }
\end{enumerate}
Note that $\tau$ is independent of the choice of an orthonormal basis of the vertical space, which suggests that the answer may be ``yes''. 

\section{Appendix}
In this appendix we present detailed proofs of technical lemmas which were applied in the previous sections. 
\begin{prop} \label{Prop_1_Appendix}
With the notation in \cref{Section:Priviledged Coordinates} and \cref{H-type-foliations} we have the identities:
\begin{itemize}
\item[(a)]  $d\theta^\alpha=\theta^\beta\wedge\omega_{\beta}^{\alpha}$
\item[(b)]  $d\eta^i=\frac{1}{2}J^{i}_{\beta\gamma}\theta^\beta\wedge\theta^\gamma+\eta^k\wedge\omega_{k}^{i}$
\item[(c)]  $d\omega_{a}^{b}=\frac{1}{2}R_{\beta\gamma a}^{b}\theta^\beta\wedge\theta^\gamma+R_{\beta j a}^{b}\theta^\beta\wedge\eta^j+\frac{1}{2}R_{jka}^{b}\eta^j\wedge\eta^k+\omega_{a}^{c}\wedge\omega_{c}^{b}$. 
\end{itemize}
\end{prop}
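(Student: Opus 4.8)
The plan is to establish both Cartan structure equations in full generality for the unified frame $\{Y_1,\ldots,Y_{\rh+\rv}\}$ with dual coframe $\{\nu^1,\ldots,\nu^{\rh+\rv}\}$, and then to specialize them using the defining properties of the Bott connection. First I would record the \emph{first structure equation}. Writing the connection one-forms via $\nabla_X Y_b = \omega^a_b(X)Y_a$, the definition of torsion gives $\nu^d(T(Y_p,Y_q)) = \omega^d_q(Y_p)-\omega^d_p(Y_q)-\nu^d([Y_p,Y_q])$, while the intrinsic formula for the exterior derivative together with the constancy of $\nu^d(Y_q)=\delta^d_q$ gives $d\nu^d(Y_p,Y_q) = -\nu^d([Y_p,Y_q])$. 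Combining these with $(\omega^d_e\wedge\nu^e)(Y_p,Y_q)=\omega^d_q(Y_p)-\omega^d_p(Y_q)$ yields
\[
d\nu^d = T^d - \omega^d_e\wedge\nu^e, \qquad T^d := \tfrac{1}{2}\,\nu^d\big(T(Y_p,Y_q)\big)\,\nu^p\wedge\nu^q.
\]

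Next I would feed in the two structural features of the Bott connection. The compatibility property $\nabla_X\Ho\subseteq\Ho$, $\nabla_X\Vr\subseteq\Vr$ forces the connection forms to be block diagonal, that is $\omega^\alpha_k=0$ and $\omega^i_\beta=0$ for every horizontal index $\alpha,\beta$ and vertical index $i,k$. The torsion conditions $T(\Ho,\Ho)\subseteq\Vr$ and $T(\Ho,\Vr)=T(\Vr,\Vr)=0$ show that all horizontal torsion forms vanish, $T^\alpha=0$, while the vertical ones collapse to $T^i=\tfrac{1}{2}J^i_{\beta\gamma}\theta^\beta\wedge\theta^\gamma$; here I would use the identification $\eta^i(T(X_\beta,X_\gamma))=g(Z_i,T(X_\beta,X_\gamma))=J^i_{\beta\gamma}$ coming directly from the definition of $J_Z$. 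Substituting into the first structure equation and dropping the vanishing terms gives (a) $d\theta^\alpha=\theta^\beta\wedge\omega^\alpha_\beta$ and (b) $d\eta^i=\tfrac{1}{2}J^i_{\beta\gamma}\theta^\beta\wedge\theta^\gamma+\eta^k\wedge\omega^i_k$.

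For (c) I would run the parallel computation for the \emph{second structure equation}. Evaluating $R(Y_p,Y_q)Y_a$ on frame fields and pairing with $\nu^b$, the three Leibniz terms assemble into $d\omega_a^b$ and the two remaining quadratic terms into $\omega_c^b\wedge\omega_a^c$, so that $R_a^b:=\nu^b\big(R(\cdot,\cdot)Y_a\big)=d\omega_a^b+\omega_c^b\wedge\omega_a^c$, equivalently $d\omega_a^b=R_a^b+\omega_a^c\wedge\omega_c^b$. It then remains only to expand the curvature two-form $R_a^b=\tfrac{1}{2}R_{pqa}^b\,\nu^p\wedge\nu^q$ into its horizontal-horizontal, mixed, and vertical-vertical parts; using the skew-symmetry $R_{pqa}^b=-R_{qpa}^b$ in the first two indices, the two mixed orderings coincide and produce the single term $R_{\beta j a}^b\,\theta^\beta\wedge\eta^j$ with no prefactor, which together with the pure-type terms is exactly identity (c).

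I expect no conceptual obstacle here: the whole argument is a bookkeeping exercise built directly from the definitions of torsion, curvature, and the exterior derivative on a frame. The only places demanding genuine care are the sign and index conventions — in particular correctly deducing the block-diagonal vanishing $\omega^\alpha_k=\omega^i_\beta=0$ from compatibility, tracking upper versus lower indices under the nonstandard summation rule of \cref{not:summations}, and keeping the factors of $\tfrac{1}{2}$ straight when the curvature two-form is decomposed into its mixed and pure-type pieces.
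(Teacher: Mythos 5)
Your proposal is correct, but it is organized along a genuinely different route than the paper's proof. You first establish the general Cartan structure equations $d\nu^d = T^d - \omega^d_e\wedge\nu^e$ and $d\omega_a^b = R_a^b + \omega_a^c\wedge\omega_c^b$ for an arbitrary frame and connection, and only then feed in the two structural features of the Bott connection: block-diagonality of the connection forms ($\omega^\alpha_k = \omega^i_\beta = 0$, from the compatibility property) and the torsion type ($T^\alpha = 0$ and $T^i = \tfrac{1}{2}J^i_{\beta\gamma}\theta^\beta\wedge\theta^\gamma$, using $\eta^i(T(X_\beta,X_\gamma)) = g(Z_i,T(X_\beta,X_\gamma)) = J^i_{\beta\gamma}$). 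The paper instead proceeds coefficient by coefficient: it expands each of $d\theta^\alpha$, $d\eta^i$, $d\omega_a^b$ in the basis of two-forms $\theta^\beta\wedge\theta^\gamma$, $\theta^\beta\wedge\eta^j$, $\eta^j\wedge\eta^k$, evaluates each coefficient via $2\,d\nu(Y_p,Y_q) = Y_p\nu(Y_q) - Y_q\nu(Y_p) - \nu([Y_p,Y_q])$, replaces each bracket by $\nabla_{Y_p}Y_q - \nabla_{Y_q}Y_p - T(Y_p,Y_q)$, and reassembles, using metricity of $\nabla$ and orthonormality of the frame repeatedly (for instance the skew-symmetry $\omega_a^c = -\omega_c^a$ in the computation of part (c)). Your route is shorter and more modular, and in fact never invokes metricity at all, so it shows the identities hold for any connection preserving the splitting $\Ho\oplus\Vr$ with the stated torsion type; the paper's computation, by contrast, is fully self-contained and keeps every coefficient and sign explicit. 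One small caution for a write-up: you implicitly use the conventions $d\nu(X,Y) = X\nu(Y) - Y\nu(X) - \nu([X,Y])$ and $(\alpha\wedge\beta)(X,Y) = \alpha(X)\beta(Y) - \alpha(Y)\beta(X)$, whereas the paper carries factors of $\tfrac{1}{2}$ in both; you have been internally consistent, and the final identities are convention-independent as equations between forms, but the convention should be fixed explicitly at the outset so that expressions such as $T^d = \tfrac{1}{2}\,\nu^d(T(Y_p,Y_q))\,\nu^p\wedge\nu^q$ are unambiguous.
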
 
\begin{proof}
(a):  Let $\alpha \in \{1, \ldots, n\}$ and consider an expansion: 
\begin{align}\label{Expansion_d_theta}
d\theta^{\alpha}
&=\sum_{\beta < \gamma} a_{\beta \gamma}^{\alpha} \theta^{\beta} \wedge \theta^{\gamma} + \sum_{\beta, i}b_{\beta i}^{\alpha} \theta^{\beta} \wedge \eta^i + \sum_{i <j}c_{ij}^{\alpha} \eta^i \wedge \eta^j. 
\end{align}
Since $\theta^{\beta}$ is dual to $X_{\beta}$  and the torsion of horizontal vectors is vertical we conclude that the coefficients $a_{\beta \gamma}^{\alpha} $ are given by: 
\begin{align}
a_{\beta \gamma}^{\alpha} 
&=2d \theta^{\alpha}(X_{\beta}, X_{\gamma}) =\underbrace{X_{\beta}\theta^{\alpha}(X_{\gamma})}_{=0} -
\underbrace{X_{\gamma}\theta^{\alpha}(X_{\beta})}_{=0} 
-\theta^{\alpha} \big{(} [X_{\beta}, X_{\gamma}] \big{)}\\
&=-  g \big{(} X_{\alpha}, \big{[}X_{\beta}, X_{\gamma} \big{]} \big{)}\\
&=- g\big{(} X_{\alpha}, \nabla_{X_{\beta}}X_{\gamma} \big{)}+ g \big{(} X_{\alpha}, \nabla_{X_{\gamma}}X_{\beta}\big{)}+\underbrace{g\big{(}X_{\alpha}, T(X_{\beta}, X_{\gamma}) \big{)}}_{=0}. 
\end{align}
Recall that $\omega_{\alpha}^{c}$ denote the connection one-forms, i.e.  $\nabla X_{\alpha}= X_{c} \otimes \omega_{\alpha}^{c}$ and therefore: 
\begin{align}
a_{\beta \gamma}^{\alpha} 
&= -g\big{(} X_{\alpha}, X_{c} \otimes \omega_{\gamma}^{c}(X_{\beta}) \big{)}+g\big{(} X_{\alpha}, X_{c} \otimes \omega_{\beta}^{c} (X_{\gamma}) \big{)}=- \omega_{\gamma}^{\alpha}(X_{\beta})+ \omega_{\beta}^{\alpha}(X_{\gamma}). 
\end{align}
Hence, we find: 
\begin{align}
\sum_{\beta < \gamma} a_{\beta \gamma}^{\alpha} \theta^{\beta} \wedge \theta^{\gamma} 
&= - \sum_{\beta <\gamma}\omega_{\gamma}^{\alpha} (X_{\beta}) \theta^{\beta} \wedge \theta^{\gamma}+ 
\sum_{\beta <\gamma} \omega_{\beta}^{\alpha} (X_{\gamma})\theta^{\beta} \wedge \theta^{\gamma} \label{GL_first_term}\\
&= \omega_{\beta}^{\alpha} (X_{\gamma}) \theta^{\beta} \wedge \theta^{\gamma}.\notag
\end{align}
Similarly, we calculate $b_{\beta i}^{\alpha}=-g ( X_{\alpha}, [X_{\beta}, Z_i])$.  Since $T(X_{\beta}, Z_i) =0$ according to property (3) of the connection we find: 
\begin{align}
b_{\beta i}^{\alpha} &=- g\big{(} X_{\alpha}, \underbrace{\nabla_{X_{\beta}}Z_i }_{\in \Gamma(\Vr)}\big{)}+ g \big{(} X_{\alpha}, \nabla_{Z_i}X_{\beta}\big{)}
=g\big{(} X_{\alpha}, \nabla_{Z_i}X_{\beta}\big{)}=\omega_{\beta}^{\alpha}(Z_i).  
\end{align}
Therefore: 
\begin{equation} \label{GL_Second_term}
\sum_{\beta, i} b_{\beta i}^{\alpha} \theta^{\beta} \wedge \eta^i= \sum_{\beta, i} \omega_{\beta}^{\alpha}(Z_i) \theta^{\beta} \wedge \eta^i.  
\end{equation}
Finally, since $T(Z_i,Z_j)=0$ according to (3): 
\begin{align}
c_{ij}^{\alpha}&=2d\theta^{\alpha}(Z_i,Z_j)=- g \big{(} X_{\alpha}, \big{[}Z_i, Z_j \big{]} \big{)}
=- g \big{(} X_{\alpha}, \underbrace{\nabla_{Z_i}Z_j}_{\in \Gamma(\Vr)}- \underbrace{\nabla_{Z_j}Z_i}_{\in \Gamma(\Vr)} \big{)}=0. \label{GL_Third_term}
\end{align}
Combining \cref{GL_first_term}, \cref{GL_Second_term} and \cref{GL_Third_term} together with $\omega_{\alpha}^{\beta}= \omega_{\alpha}^{\beta}(X_c)\theta^{c}$  yields (a): 
\begin{equation}
d\theta^{\alpha}= 
\theta^{\beta} \wedge \big{[} \omega_{\beta}^{\alpha}(X_{\gamma})\theta^{\gamma} + \omega_{\beta}^{\alpha} (Z_i) \eta^i \big{]}=\theta^{\beta} \wedge \omega_{\beta}^{\alpha}. 
\end{equation} 
(b): Let $i \in \{1, \ldots, m\}$ and consider an expansion of the left hand side: 
\begin{align}
d\eta^i
&=\sum_{\beta < \gamma}  d_{\beta \gamma}^i \theta^{\beta} \wedge \theta^{\gamma} + \sum_{\beta, j} e_{\beta j}^i \theta^{\beta} \wedge \eta^j+ \sum_{j <k}f_{jk}^i \eta^j \wedge \eta^k. 
\end{align}
We calculate the coefficients $ d_{\beta \gamma}^i$, $e_{\beta j}^i $ and $f_{jk}^i$. First, note that: 
\begin{align}
d_{\beta \gamma}^i&= 2d \eta^i(X_{\beta}, X_{\gamma})=-  \eta^i\big{(} [X_{\beta}, X_{\gamma}] \big{)}=- g\big{(}Z_i, [X_{\beta}, X_{\gamma}]\big{)}\\
&=  -g \big{(} Z_i, \underbrace{\nabla_{X_{\beta}}X_{\gamma}}_{\in \Gamma(\Ho)}- \underbrace{\nabla_{X_{\gamma}} X_{\beta}}_{\in \Gamma(\Ho)} - T(X_{\beta}, X_{\gamma}) \big{)}\\
&=  g\big{(} Z_i, T(X_{\beta}, X_{\gamma}) \big{)}\\
&= g\big{(} J_{Z_i}X_{\beta}, X_{\gamma} \big{)}=J_{\beta \gamma}^i. 
\end{align}
Similarly, for $e_{\beta j}^i$ and $f_{jk}^i$:
\begin{align}
e_{\beta j}^i
&=-  g \big{(} Z_i, \underbrace{\nabla_{X_{\beta}}Z_j}_{\in \Gamma(\Vr)}- \underbrace{\nabla_{Z_j} X_{\beta}}_{\in \Gamma(\Ho)} - 
\underbrace{T(X_{\beta}, Z_j)}_{=0} \big{)}\\
&=- g \big{(} Z_i, \nabla_{X_{\beta}}Z_j \big{)}= -\omega_{j}^{i}(X_{\beta}),  \\
f_{jk}^i
&=- g \big{(} Z_i, \underbrace{\nabla_{Z_j}Z_k}_{\in \Gamma(\Vr)}- \underbrace{\nabla_{Z_k} Z_j}_{\in \Gamma(\Vr)} - 
\underbrace{T(Z_j, Z_k)}_{=0} \big{)}\\
&=- \omega_{k}^{i}(Z_j) + \omega_{j}^{i}(Z_k). 
\end{align}
Combining the identities and using the skew-symmetry $J_{\beta \gamma}^i = - J_{\gamma \beta}^i$ gives: 
\begin{align}
d \eta^i
&= \sum_{\beta < \gamma} J_{\beta \gamma}^i \theta^{\beta} \wedge \theta^{\gamma} - \sum_{\beta, j} \omega_{j}^{i}(X_{\beta}) \theta^{\beta} \wedge \eta^j
- \sum_{j<k} \Big{(} \omega_{k}^{i}(Z_j) - \omega_{j}^{i}(Z_k) \Big{)} \eta^j \wedge \eta^k\\
&= \frac{1}{2} J_{\beta \gamma}^i \theta^{\beta} \wedge \theta^{\gamma}+ \sum_{\beta, k} \omega_{k}^{i}(X_{\beta}) \eta^k \wedge \theta^{\beta}
+ \sum_{j,k}\omega_{k}^{i}(Z_j) \eta^k \wedge \eta^j\\
&=\frac{1}{2} J_{\beta \gamma}^i \theta^{\beta} \wedge \theta^{\gamma}+\sum_k \eta^k \wedge \left( \sum_{\beta} \omega_{k}^{i}(X_{\beta})\theta^{\beta}+ \sum_j \omega_{k}^{i}(Z_j) \eta^j \right)\\
&=\frac{1}{2} J_{\beta \gamma}^i \theta^{\beta} \wedge \theta^{\gamma}+ \eta^k \wedge \omega_{k}^{i}. 
\end{align}
\noindent
(c):   Let $a,b \in \{1, \ldots, n+m\}$ and consider an expansion of the left hand side: 
\begin{align}\label{Expansion_GL_3}
d\omega_{a}^{b}
&= \sum_{\beta < \gamma} n_{a\beta \gamma}^{b} \theta^{\beta} \wedge \theta^{\gamma} +\sum_{\beta, j} h_{a\beta j}^{b}\theta^{\beta} \wedge \eta^j
+ \sum_{j<k} m_{ajk}^{b}\eta^j \wedge \eta^k. 
\end{align}
Again, we calculate the coefficients in the expansion. 
\begin{align}
n_{a\beta \gamma}^{b}
&=2\,d\omega_{a}^{b}(X_{\beta}, X_{\gamma})= X_{\beta} \omega_{a}^{b}(X_{\gamma})-  X_{\gamma} \omega_{a}^{b}(X_{\beta}) - \omega_{a}^{b}
\big{(} [X_{\beta}, X_{\gamma}] \big{)}. 
\end{align}
Recall that  $\nabla_{X_{\gamma}}X_{a}= X_{c} \otimes \omega_{a}^{c}(X_{\gamma})$ and therefore $\omega_{a}^{b}(X_{\gamma})= g\big{(}X_b, \nabla_{X_{\gamma}} X_{a} \big{)}$. 
According to (1) the connection $\nabla$ is metric and hence: 
\begin{align}
 X_{\beta} \omega_{a}^{b}(X_{\gamma})
 &=X_{\beta}g\big{(}X_b, \nabla_{X_{\gamma}} X_{a} \big{)}
 =g\big{(} \nabla_{X_{\beta}}X_b, \nabla_{X_{\gamma}} X_{a} \big{)}+g \big{(} X_b, \nabla_{X_{\beta}} \nabla_{X_{\gamma}}X_a \big{)}\\
X_{\gamma} \omega_{a}^{b}(X_{\beta}) 
&=g\big{(} \nabla_{X_{\gamma}}X_b, \nabla_{X_{\beta}} X_{a} \big{)}+g \big{(} X_b, \nabla_{X_{\gamma}} \nabla_{X_{\beta}}X_a \big{)}. 
\end{align}
Moreover, one has: 
\begin{align}
\omega_{a}^{b} \big{(} [X_{\beta}, X_{\gamma} \big{]} \big{)} &= g\big{(} X_b, \nabla_{[X_{\beta}, X_{\gamma}]}X_a \big{)}\\
g \big{(} \nabla_{X_{\beta}}X_b, \nabla_{X_{\gamma}}X_a \big{)}
&
=g \big{(} X_{\rho} ,\nabla_{X_{\beta}}X_b\big{)} g\big{(}  X_{\rho},\nabla_{X_{\gamma}}X_a  \big{)}
= -\omega^{b}_{\rho}(X_{\beta}) \omega_{a}^{\rho}(X_{\gamma}),
\end{align}
where we used the property that for all $a,b,c$: 
\begin{equation}
\omega_{a}^{c}(X_b)= g(X_c, \nabla_{X_b}X_a)= -g(\nabla_{X_b}X_c, X_a)= -\omega_{c}^{a}(X_b), 
\end{equation}
based on the fact that the Bott connection $\nabla$ is metric and that the basis is orthonormal. It also shows the skew-symmetry of the connection form; that is $\omega_{a}^{c}=-\omega_{c}^{a}$. 

Inserting these identities into the equations above yields: 
\begin{align}
n_{a\beta \gamma}^{b}&
=g \big{(} X_b, R(X_{\beta}, X_{\gamma})X_a \big{)} 
+ \omega_{a}^{\rho}(X_{\beta}) \omega^{b}_{\rho}(X_{\gamma})-  \omega_{a}^{\rho}(X_{\gamma}) \omega^{b}_{\rho}(X_{\beta})
\\
&
=R_{\beta \gamma a}^b
+ \omega_{a}^{\rho}(X_{\beta}) \omega^{b}_{\rho}(X_{\gamma})-  \omega_{a}^{\rho}(X_{\gamma}) \omega^{b}_{\rho}(X_{\beta}).
\end{align}
A similar calculation shows
\begin{align}
h_{a\beta j}^{b}
&=
2\,d\omega_{a}^{b}(X_{\beta}, Z_j)= X_{\beta} \big{(} \omega_{a}^{b}(Z_j) \big{)}- Z_j \big{(} \omega_{a}^{b}(X_{\beta}) \big{)} - \omega_{a}^{b}
\big{(} [X_{\beta}, Z_j] \big{)}\\
&=
R_{\beta j a}^b
+ \omega_{a}^{\rho}(X_{\beta}) \omega^{b}_{\rho}(Z_j)-\omega_{a}^{\rho}(Z_j)\omega^{b}_{\rho}(X_{\beta}), 
\end{align}
and 
\begin{align}
m_{ajk}^{b}
&=2\,d \omega_{a}^{b}(Z_j,Z_k))=  Z_j \omega_{a}^{b}(Z_k)- Z_k\omega_{a}^{b}(Z_j)- \omega_{a}^{b} \big{(}[Z_j,Z_k] \big{)} \\
&=R_{jka}^b
+\omega_{a}^{\rho}(Z_j) \omega^{b}_{\rho}(Z_k)-\omega_{a}^{\rho}(Z_k) \omega^{b}_{\rho}(Z_j). 
\end{align}
Observe that for $a,b,c,d \in \{1, \ldots, n+m\}$,
\begin{equation}
\omega_{a}^{c} \wedge \omega^{b}_{c}= \omega_{a}^{c}(X_d)\omega^{b}_{c}(X_l)\theta^d\wedge \theta^l. 
\end{equation}
Using the identities in the expansion \cref{Expansion_GL_3} together with the skew-symmetry of the coefficients $n_{a\beta\gamma}^{b}=- n_{a\gamma \beta}^{b}$ and $m_{ajk}^{b}=- m_{akj}^{b}$ defined above implies: 
\begin{align}
d\omega_{a}^{b}=
&
\Big{(}\frac{1}{2} R_{\beta \gamma a}^b 
+ \omega_{a}^{\rho}(X_{\beta}) \omega^{b}_{\rho}(X_{\gamma}) \Big{)} \theta^{\beta} \wedge \theta^{\gamma} \\
&
+ R_{\beta j a}^b \theta^{\beta} \wedge \eta^j
+ \omega_{a}^{\rho}(X_{\beta}) \omega^{b}_{\rho}(Z_j) \theta^{\beta} \wedge \eta^j
- \omega_{a}^{\rho}(Z_j) \omega^{b}_{\rho}(X_{\beta}) \theta^{\beta} \wedge \eta^j
\\
&
+ \Big{(} \frac{1}{2} R_{jka}^b 
+ \omega_{a}^{\rho}(Z_j) \omega^{b}_{\rho}(Z_k) \Big{)} \eta^j \wedge \eta^k 
\\
&
=\frac{1}{2} R_{\beta \gamma a}^b  \theta^{\beta} \wedge \theta^{\gamma}
+ R_{\beta j a}^b \theta^{\beta} \wedge \eta^j
+\frac{1}{2} R_{jka}^b \eta^j \wedge \eta^k 
+ \omega_{a}^{c} \wedge 
\omega^{b}_{c}. 
\end{align}
We have used $\omega_{b}^{c}(X_{\beta})=0$ if $\beta \in \{1, \ldots, n\}$ and $c >n$. 
\end{proof}


\newpage
\bibliographystyle{abbrv}
\bibliography{biblio}

\end{document}